\newcommandx{\at}[2][1=]{\todo[linecolor=red,backgroundcolor=red!25,bordercolor=red,#1]{#2}}
\DeclarePairedDelimiter\ceil{\lceil}{\rceil}
\title{Computing spectral measures of self-adjoint operators\thanks{Submitted to the editors \today.
\funding{The first author was supported by EPSRC grant EP/L016516/1. The work of the second and third authors is supported by the National Science Foundation grant no.~1818757.}}}
\author{Matthew Colbrook\thanks{Department of Applied Mathematics and Theoretical Physics, University of Cambridge,
Cambridge, CB3 0WA. (\email{m.colbrook@damtp.cam.ac.uk})} \and Andrew Horning\thanks{Center for Applied Mathematics, Cornell University, Ithaca, NY 14853. (\email{ajh326@cornell.edu})} \and Alex Townsend\thanks{Department of Mathematics, Cornell University, Ithaca, NY  14853. (\email{townsend@cornell.edu})}}
\begin{document}
\maketitle

\begin{abstract}
Using the resolvent operator, we develop an algorithm for computing smoothed approximations of spectral measures associated with self-adjoint operators. The algorithm can achieve arbitrarily high-orders of convergence in terms of a smoothing parameter for computing spectral measures of general differential, integral, and lattice operators. Explicit pointwise and $L^p$-error bounds are derived in terms of the local regularity of the measure. We provide numerical examples, including a partial differential operator, a magnetic tight-binding model of graphene, and compute one thousand eigenvalues of a Dirac operator to near machine precision without spectral pollution. The algorithm is publicly available in \texttt{SpecSolve}, which is a software package written in MATLAB.
\end{abstract}

\begin{keywords}
spectrum, spectral measures, resolvent, spectral methods, rational kernels
\end{keywords}

\begin{AMS}
47A10, 46N40, 47N50, 65N35, 81Q10
\end{AMS}

\section{Introduction}\label{sec:introduction}
The spectrum of a finite matrix consists only of discrete eigenvalues; however, many of the infinite-dimensional operators in mathematical analysis and physical applications include a continuous spectral component~\cite{stone2009mathematics,kato2013perturbation}. Notably, eigenvalues and eigenvectors do not diagonalize operators with continuous spectra, and one needs extra information to fully describe the operator and associated dynamics of physical models~\cite{gustafson1997operator,touhei2000scattering}.  Given a self-adjoint operator $\mathcal{L}$ acting on a Hilbert space $\mathcal{H}$, the spectral measure (see~\cref{eqn:spec_meas}) of $\mathcal{L}$ is a quantity of great interest because it provides an analogue of diagonalization through the spectral theorem (see~\cref{sec:theory}). Spectral measures are related to correlation in stochastic processes and signal-processing~\cite{kallianpur1971spectral,girardin2003semigroup}~\cite[Ch.~7]{rosenblatt1991stochastic}, scattering cross-sections in particle physics~\cite{efros2007lorentz,efros1994response,efros2019calculating}, the local density-of-states in crystalline materials~\cite{beer1984recursion,haydock1972electronic,lin2016approximating}, and many other quantities~\cite{wilkening2015spectral,killip2003sum,MR838253,damanik2006jost,trogdon2012numerical}. Furthermore, through spectral measures one can compute the functional calculus of $\mathcal{L}$, which is used to solve evolution equations such as the Schr\"odinger equation in quantum mechanics \cite{lubich_qm_book,hochbruck2010exponential}. 

The eigenvalues and eigenvectors of an infinite-dimensional operator with discrete spectrum are usually computed by discretizing and employing a matrix eigensolver~\cite{boyd2001chebyshev,chatelin1983spectral}. Computing spectral measures is more subtle, and previous efforts have mainly focused on operators where analytical formulas or heuristics are available
(see~\cref{sec:spec_meas_connect}). Building on~\cite{colbrook2019computing,horning2019feast}, we develop a general framework for computing approximations to spectral measures of operators that only requires two capabilities: 
\begin{enumerate}[,noitemsep]
\item A numerical solver for shifted linear equations, i.e., $(\mathcal{L}-z)u=f$ with $z\in\mathbb{C}$.
\item Numerical approximations to inner products of the form $\langle u,f\rangle$.
\end{enumerate} 
Here, $\langle\cdot,\cdot\rangle$ is the inner product associated with $\mathcal{H}$, which can be general provided one can compute $\langle u,f\rangle$. We develop high-order rational convolution kernels that allow us to construct accurate approximations to spectral measures by solving the shifted linear equations (see~\cref{tab_kernel} and~\cref{fig:kernels}). Error bounds show that our approximations to the spectral measure converge rapidly (see~\cref{thm:kernel_rates,thm:Lp_rates}). We apply our algorithm to differential (see~\cref{sec:diff_ops}), integral (see~\cref{sec:FredholmIntegralOperator}), and lattice (see~\cref{sec:graphene}) operators to demonstrate its versatility, high accuracy, and robustness. We also use our approximations of spectral measures to compute the first thousand eigenvalues of a Dirac operator (corresponding to bound states in the gap of the essential spectrum) without spectral pollution (see~\cref{sec:dirac}). Thus, spectral measures are also a useful tool for the computation of discrete spectra when there are gaps in the essential spectrum or when discrete spectra cluster (see~\cref{sec:graphene,sec:dirac}). To accompany this paper, we have developed a publicly available MATLAB package called \texttt{SpecSolve} for computing spectral measures of a large class of self-adjoint operators~\cite{SpecSolve}. 

The paper is organized as follows. We recall the definition of the spectral measure of an operator in~\cref{sec:theory} and survey existing algorithms in~\cref{sec:spec_meas_connect}. In~\cref{sec:comput_meas}, we introduce our computational framework and derive high-order versions in~\cref{sec:HigherOrderKernels}. In~\cref{sec:practical_considerations} we discuss algorithmic issues and tackle challenging applications in~\cref{sec:examples}. Finally, we point out additional capabilities and use of the algorithm in~\cref{sec:conclusions}.

\section{The spectral measure of a self-adjoint operator}\label{sec:theory}
Any linear operator acting on a finite-dimensional Hilbert space has a purely discrete spectrum consisting of eigenvalues. In particular, the spectral theorem for self-adjoint $A\in \mathbb{C}^{n\times n}$ states that there exists an orthonormal basis of eigenvectors $v_1,\dots,v_n$ for $\mathbb{C}^n$ such that
\begin{equation}\label{eqn:disc_decomp}
v = \left(\sum_{k=1}^n v_kv_k^*\right)v, \quad v\in\mathbb{C}^n \qquad\text{and}\qquad Av = \left(\sum_{k=1}^n\lambda_k v_kv_k^*\right)v, \quad v\in\mathbb{C}^n,
\end{equation}
where $\lambda_1,\ldots,\lambda_n$ are eigenvalues of $A$, i.e., $Av_k = \lambda_kv_k$ for $1\leq k\leq n$. In other words, the projections $v_kv_k^*$ decompose $\mathbb{C}^n$ and diagonalize $A$.

In the infinite-dimensional setting, we replace $v\in\mathbb{C}^n$ by $f\in\mathcal{H}$, and $A$ by a self-adjoint operator $\mathcal{L}$ with domain $\mathcal{D}(\mathcal{L})\subset\mathcal{H}$.\footnote{Considering $\mathcal{L}:\mathcal{D}(\mathcal{L})\rightarrow\mathcal{H}$ allows us to treat unbounded operators such as differential operators.} If $\mathcal{L}$ has non-empty continuous spectrum, then eigenfunctions of $\mathcal{L}$ do not form a basis for $\mathcal{H}$ or diagonalize $\mathcal{L}$. However, the spectral theorem for self-adjoint operators states that the projections $v_kv_k^*$ in~\cref{eqn:disc_decomp} can be replaced by a projection-valued measure $\mathcal{E}$~\cite[Thm.~VIII.6]{reed1972methods}. The measure $\mathcal{E}$ assigns an orthogonal projector to each Borel-measurable set such that
\[
	f=\left(\int_\mathbb{R} d\mathcal{E}(y)\right)f,\quad f\in\mathcal{H} \qquad\text{and}\qquad \mathcal{L}f=\left(\int_\mathbb{R} y\,d\mathcal{E}(y)\right)f, \quad f\in\mathcal{D}(\mathcal{L}).
\]
Analogous to~\cref{eqn:disc_decomp}, $\mathcal{E}$ decomposes $\mathcal{H}$ and diagonalizes the operator $\mathcal{L}$.

The spectral measure of $\mathcal{L}$ with respect to $f\in\mathcal{H}$ is a scalar measure defined as $\mu_f(\Omega):=\langle\mathcal{E}(\Omega)f,f\rangle$, where $\Omega\subset\mathbb{R}$ is a Borel-measurable set~\cite{reed1972methods}. It is useful to examine Lebesgue's decomposition of $\mu_f$~\cite{stein2009real}, i.e.,
\begin{equation}\label{eqn:spec_meas}
d\mu_f(y)= \underbrace{\sum_{\lambda\in\Lambda^{{\rm p}}(\mathcal{L})}\langle\mathcal{P}_\lambda f,f\rangle\,\delta({y-\lambda})dy}_{\text{discrete part}}+\underbrace{\rho_f(y)\,dy +d\mu_f^{(\mathrm{sc})}(y)}_{\text{continuous part}}.
\end{equation}
The discrete part of $\mu_f$ is a sum of Dirac delta distributions, supported on the set of eigenvalues of $\mathcal{L}$, which we denote by $\Lambda^{{\rm p}}(\mathcal{L})$. The coefficient of each $\delta$ in the sum is $\langle\mathcal{P}_\lambda f,f\rangle=\|\mathcal{P}_\lambda f\|^2$, where $\mathcal{P}_\lambda$ is the orthogonal spectral projector associated with the eigenvalue $\lambda$, and $\|\cdot\|=\sqrt{\langle\cdot,\cdot\rangle}$ is the norm on $\mathcal{H}$. The continuous part of $\mu_f$ consists of an absolutely continuous\footnote{We take ``absolutely continuous'' to be with respect to Lebesgue measure.} part with Radon--Nikodym derivative $\rho_f\in L^1(\mathbb{R})$ and a singular continuous component $\smash{\mu_f^{(\mathrm{sc})}}$.  Without loss of generality, we assume throughout that $\|f\|=1$, which ensures that $\mu_f$ is a probability measure.

Many operators have non-empty continuous spectra \cite[Ch.~10]{kato2013perturbation} such as self-adjoint Toeplitz operators on $\ell^2(\mathbb{N})$ (square summable sequences, where $\mathbb{N}=\{1,2,...\}$) \cite{Bottcher}, differential operators on bounded domains with singular variable coefficients~\cite{izrail1965direct,levitan1975introduction} and unbounded domains~\cite[Ch.~V]{titchmarsh2011elgenfunction}\cite[Ch.~XIII,Ch.~XIV]{dunford1963linear}, and integral perturbations of multiplication operators and Cauchy-type integral operators~\cite{friedrichs1948perturbation,koppelman1960spectral}. In physical systems that scatter or radiate energy, the associated operator typically has a mix of continuous and discrete spectra, e.g., see RAGE theorem~\cite{amrein1973characterization,enss1978asymptotic,ruelle1969remark}. We aim to evaluate smoothed approximations of $\mu_f$ when $\mathcal{L}$ has a non-empty continuous spectrum. This means that we compute samples from a smooth function $g_\epsilon$, with smoothing parameter $\epsilon>0$, that converges weakly to $\mu_f$~\cite[Ch.~1]{billingsley2013convergence}. That is, 
\[
\int_\mathbb{R} \phi(y)g_\epsilon(y)\,dy\rightarrow \int_\mathbb{R}\phi(y)\,d\mu_f(y), \qquad\text{as}\qquad\epsilon\downarrow 0,
\]
for any bounded, continuous function $\phi$. Approximation properties and explicit convergence bounds are studied in~\cref{sec:comput_meas,sec:HigherOrderKernels}.

\section{Applications of spectral measures}\label{sec:spec_meas_connect}
Spectral measures appear in many traditional topics of applied analysis, such as ordinary (ODEs) and partial differential equations (PDEs), stochastic processes, orthogonal polynomials, and random matrix theory. Here, we give a brief survey of existing algorithms for computing $\mu_f$ and closely related quantities.

\subsection{Particle and condensed matter physics}\label{QM_leb_kernels}
Spectral measures are prominent in quantum mechanics~\cite{reed1972methods,hall2013quantum}, where a self-adjoint operator $\mathcal{L}$ represents an observable quantity, and $\mu_f$ describes the likelihood of different outcomes when the observable is measured (see~\cref{sec:alt_convergence}). In this setting, $f\in\mathcal{H}$ with $\|f\|=1$ represents a quantum state. For example, in quantum models of interacting particles, spectral measures of many-body Hamiltonians are used to study the response of a quantum system to perturbations~\cite{efros1994response}. In condensed matter physics, spatially-resolved statistical properties of materials are analyzed using the local density-of-states\footnote{This is distinct from the global density-of-states (DOS), which is formally obtained from the LDOS via an averaging procedure~\cite[Ch.~6.4]{kiejna1996metal}.} (LDOS) of an $n\times n$ matrix $A_n$~\cite[Ch.~6.4]{kiejna1996metal}, which is the spectral measure of $A_n$ taken with respect to a vector $b$~\cite{lin2016approximating}. Here, $A_n$ is typically a discretized or truncated Hamiltonian and one is interested in the thermodynamic limit $n\rightarrow\infty$, so that $A_n$ is too large to compute a full eigenvalue decomposition.

There are two main classes of numerical methods for computing these measures. One class constructs smooth global approximations of the measure with explicit moment-matching procedures~\cite{silver1994densities,langhoff1980stieltjes,weisse2006kernel}, while another class exploits a connection between the spectral measure and the resolvent operator to evaluate samples from a smoothed approximation to the measure~\cite{haydock1972electronic,beer1984recursion,efros2007lorentz}. For example, the so-called recursion method~\cite{haydock1972electronic,beer1984recursion} evaluates the resolvent of tridiagonal Hamiltonians using associated continued-fraction expansions. Resolvent techniques to compute the DOS of finite matrices also appear in the study of random matrices and Schr\"odinger operators, where the connection is made through the Stieltjes transform~\cite{bai2010spectral,carmona2012spectral}.

The resolvent of an operator $\mathcal{L}$ with spectrum $\Lambda(\mathcal{L})$ is given by~\cite[p.~173]{kato2013perturbation} 
\begin{equation}\label{eqn:resolvent}
\mathcal{R}_\mathcal{L}(z)=(\mathcal{L}-z)^{-1}, \qquad z\in\mathbb{C}\setminus\Lambda(\mathcal{L}).
\end{equation}
In~\cref{sec:comput_meas}, we evaluate a smoothed approximation of $\mu_f$ by evaluating the resolvent function $\langle\mathcal{R}_\mathcal{L}(z)f,f\rangle$ in the upper half-plane, i.e., ${\rm Im}(z)>0$. Our approach is closely related to the second class of methods developed for operators in quantum mechanics. A key theme in the above moment-matching and resolvent-based approaches is smoothing, which is introduced by convolution with a smoothing kernel to avoid difficulties associated with the singular part of the measure~\cite{lin2016approximating}. The smoothed approximations of the spectral measures that we compute in~\cref{sec:comput_meas,sec:HigherOrderKernels} also have the form of $K_\epsilon*\mu_f$, where $K_\epsilon$ is a smoothing kernel with smoothing parameter $\epsilon>0$.

Our framework is ``discretization-oblivious," in the sense that it directly resolves the spectral measure of an infinite-dimensional $\mathcal{L}$, and not an underlying discretization. This means that our algorithms do not suffer from spectral pollution.\footnote{Spectral pollution is the phenomenon of eigenvalues of finite discretizations/truncations clustering at points not in the spectrum of $\mathcal{L}$ as the truncation size increases.} Moreover, our framework can be used with any accurate numerical method for solving linear operator equations and computing inner products, making it applicable to differential, integral, and lattice operators. Achieving a discretization-oblivious framework requires balancing refinement in the computation of $\langle\mathcal{R}_\mathcal{L}(z)f,f\rangle$ and refinement in the smoothing parameter, which we do in a principled way (see~\cref{num_balance_act}).

\subsection{Time evolution and spectral density estimation}\label{time_ev_label}
Spectral measures provide a useful lens when studying processes that evolve over time. Suppose that $u: [0,T] \rightarrow\mathcal{H}$ evolves over time according to the abstract Cauchy problem\begin{equation}\label{eqn:time_evo}
\frac{d u}{d t}=-i\mathcal{L}u, \qquad u(0)=f\in\mathcal{H},
\end{equation}
where $\mathcal{L}$ is a self-adjoint operator. For example,~\cref{eqn:time_evo} could describe the evolution of a quantum system according to the Schr\"odinger equation~\cite{lubich_qm_book}. Semigroup theory~\cite{pazy2012semigroups} shows that the solution to~\cref{eqn:time_evo} is given by the operator exponential $\smash{e^{-i\mathcal{L}t}}f$. The autocorrelation function of $u$ is of interest, i.e., 
\[
\langle u(t),f\rangle = \langle e^{-i\mathcal{L}t}f,f\rangle=\int_\mathbb{R}e^{-iyt}\,d\mu_f(y),\qquad t\in[0,T],
\]
which can reveal features that persist over time~\cite{tannor2007introduction}. This interpretation of a time evolution process is quite flexible and can be adapted to describe many signals, $u$, generated by PDEs~\cite{simon1982schrodinger,hundertmark2013operator,dell2019second} and stochastic processes~\cite{kallianpur1971spectral,girardin2003semigroup}~\cite[Ch.~7]{rosenblatt1991stochastic}.

In certain evolution processes, $\mu_f$ is referred to as the spectral density of $u$~\cite{cramer1961some}. The task of spectral density estimation is to recover $\mu_f$ from samples of $u(t)$~\cite[Ch.~1.5]{stoica2005spectral}. A popular technique used in spectral density estimation, related to statistical kernel density estimation \cite{wand1994kernel,tsybakov2008introduction}, reconstructs a smoothed approximation to $\mu_f$ by convolving the empirical measure (a discrete measure supported on the observed samples) with a smoothing kernel~\cite{parzen1957consistent,priestley1962basic}. The particular choice of smoothing kernel affects the convergence properties of the smoothed spectral density~\cite{parzen1961mathematical}. 

In analogy to the variance-bias tradeoff encountered when selecting the smoothing parameter in statistical kernel density estimation \cite{rosenblatt1956remarks,parzen1962estimation}, our smoothed approximations, $K_\epsilon*\mu_f$, exhibit a tradeoff between numerical cost and smoothing (see~\cref{num_balance_act}). In~\cref{sec:HigherOrderKernels}, we adapt arguments from kernel density estimation to determine what properties a smoothing kernel needs to achieve a high-order of convergence in the smoothing parameter. 

\subsection{Sturm--Liouville and Jacobi operators}
Spectral density functions are used in the analysis of singular Sturm--Liouville problems and related classes of self-adjoint operators~\cite{marchenko2011sturm}. A subtle distinction between spectral measures $\mu_f$ and the spectral density function associated with a Sturm--Liouville problem is that the latter does not depend on a given vector $f$. Instead, the spectral density function corresponds to the multiplicative version of the spectral theorem~\cite[Thm.~VIII.4]{reed1972methods}, which induces a Fourier transform-type pair~\cite{coddington}. However, computational methods for both spectral quantities share similarities. For example, one can compute spectral density functions using a Plemelj-type formula~\cite{wilkening2015spectral}, which is similar to~\cref{eqn:jump_relation}.

A common approach to computing spectral density functions associated with Sturm--Liouville operators on unbounded domains is to truncate the domain and take an appropriate limit of an eigenvalue counting function, as implemented in the software package SLEDGE \cite{pruess1993mathematical,fulton1994parallel,fulton1998computation}. This is similar in spirit to DOS calculations, though convergence analysis remains challenging due to the truncation of the infinite interval~\cite{pruess1996error}. This approach can be computationally expensive since the eigenvalues cluster as the domain size increases; often, hundreds of thousands of eigenvalues and eigenvectors need to be computed. One can avoid this cost for certain operators by leveraging analytic limit formulas and solving an ODE at each evaluation point of the spectral density function~\cite{fulton2005computing,fulton2008new}.  Similar methods apply to compute the inverse scattering transform for the Toda lattice and the KdV equation~\cite{bilman2017numerical,trogdon2012numerical}. 

For a Jacobi operator $J$ on $\ell^2(\mathbb{N})$, under suitable conditions~\cite[Ch.~2]{teschl2000jacobi}, the spectral measure $\mu_{e_1}$ of $J$ ($e_1$ denotes the first canonical basis vector) coincides with the measure given by the multiplicative version of the spectral theorem. Moreover, $\mu_{e_1}$ is the probability measure associated with the orthonormal polynomials whose three-term recurrence relation is associated with $J$~\cite{deift1999orthogonal}. Due to this connection, the study of spectral measures has a rich history in the theory of orthogonal polynomials and quadrature rules for numerical integration~\cite{szeg1939orthogonal,MR838253,teschl2000jacobi,deift1999orthogonal,liesen2013krylov}. In special cases, one can recover a distribution function for $\mu_{e_1}$ as a limit of functions constructed using Gaussian quadrature~\cite[Ch.~2]{chihara2011introduction}. One can even use connection coefficients between families of orthogonal polynomials to compute spectral measures of Jacobi operators that arise as compact perturbations of Toeplitz operators~\cite{webb2017spectra}. Applications in this direction include quantum theory and random matrix theory \cite{killip2003sum,simon2010szegHo,gamboa2016sum}. 

While these approaches are specialized to a selected class of operators, we focus on developing a general framework to deal rigorously with arbitrary order ODEs and PDEs (see~\cref{sec:diff_ops}), and integral operators (see~\cref{sec:FredholmIntegralOperator}). The price we pay for this generality is the need to solve shifted linear systems close to the operator's spectrum. We demonstrate that this can be done robustly with fast, well-conditioned, and spectrally accurate methods (see~\cref{sec:practical_considerations}). Similarly, we aim to build a framework that treats general discrete or lattice operators (see~\cref{discrete_specfun_sec}).

\section{Resolvent-based approach to evaluate the spectral measure}\label{sec:comput_meas}
The key to our framework for computing spectral measures is the resolvent of $\mathcal{L}$ (see~\cref{eqn:resolvent}). A classical result in operator theory is Stone's formula, which says that the spectral measure of $\mathcal{L}$ can be recovered from the jump in the resolvent $\mathcal{R}_\mathcal{L}(z)$ across the real axis~\cite{stone1932linear}\cite[Thm.~VII.13]{reed1972methods}. More precisely, if we select $\epsilon>0$ and regard $\mathcal{R}_\mathcal{L}(x+i\epsilon)$ as a function of the real variable $x$, then we have that
\begin{equation}\label{eqn:jump_relation}
\tfrac{1}{2\pi i}\langle\left(\mathcal{R}_\mathcal{L}(\,\cdot +i\epsilon)-\mathcal{R}_\mathcal{L}(\,\cdot-i\epsilon)\right)\!f,f\rangle=\tfrac{1}{\pi}\!{\rm Im}\!\left(\langle\mathcal{R}_\mathcal{L}(\,\cdot+i\epsilon)f,f\rangle\right) \rightarrow \mu_f\text{  as }\epsilon\downarrow 0.
\end{equation}
Here, the equality is due to the conjugate symmetry of $\mathcal{R}_\mathcal{L}(z)$ across the real axis and the limit should be understood in the sense of weak convergence of measures.

Stone's formula is a consequence of the functional calculus identity
\begin{equation}\label{eqn:fa_identity}
\langle\mathcal{R}_\mathcal{L}(x+i\epsilon)f,f\rangle=\int_\mathbb{R}\frac{d\mu_f(y)}{y-(x+i\epsilon)}.
\end{equation}
By using~\cref{eqn:fa_identity} to rewrite~\cref{eqn:jump_relation}, we arrive at an expression for the jump over the real axis as a convolution of the spectral measure with the Poisson kernel, i.e.,
\begin{equation}\label{eqn:pk_identity}
\frac{1}{\pi}{\rm Im}\!\left(\langle\mathcal{R}_\mathcal{L}(x+i\epsilon)f,f\rangle\right)=\frac{1}{\pi}\int_\mathbb{R}\frac{\epsilon}{\epsilon^2+(x-y)^2}d\mu_f(y).
\end{equation}
The Poisson kernel is one of the most common kernels used to smooth approximations of measures in particle and condensed matter physics (see the discussion in~\cref{QM_leb_kernels}). When $\mathcal{L}$ has no singular continuous spectrum, substituting the spectral measure given in~\cref{eqn:spec_meas} into the expression~\cref{eqn:pk_identity} shows that $\mathcal{R}_\mathcal{L}(x+i\epsilon)$ provides an approximation to both the discrete and continuous components of the measure $\mu_f$ for $\epsilon>0$. That is, 
\begin{equation}\label{eqn:disc_cont}
\frac{1}{\pi}{\rm Im}\!\left(\langle\mathcal{R}_\mathcal{L}(x+i\epsilon)f,f\rangle\right)=\frac{1}{\pi}\sum_{\lambda\in\Lambda^{\mathrm{p}}(\mathcal{L})}\frac{\epsilon\,\langle\mathcal{P}_\lambda f,f\rangle}{\epsilon^2+(x-\lambda)^2} + \frac{1}{\pi}\int_\mathbb{R}\frac{\epsilon\,\rho_f(y)}{\epsilon^2+(x-y)^2}\,dy.
\end{equation}
The contribution from the sum in~\cref{eqn:disc_cont} is a series of Poisson kernels centered at the eigenvalues and scaled by the corresponding coefficients $\langle\mathcal{P}_\lambda f,f\rangle$ for $\lambda\in\Lambda^{\mathrm{p}}(\mathcal{L})$. As $\epsilon\downarrow 0$, the sum converges to a series of Dirac delta distributions representing the discrete part of the measure in~\cref{eqn:spec_meas}. Meanwhile, the integral in~\cref{eqn:disc_cont} contributes a smoothed approximation to the Radon--Nikodym derivative $\rho_f$. 

Motivated by~\cref{eqn:disc_cont}, we select $\epsilon>0$ and approximate samples of $\mu_f$ by evaluating 
\begin{equation}\label{eqn:smoothed_meas}
\mu_f^\epsilon(x):=\tfrac{1}{\pi}{\rm Im}\!\left(\langle\mathcal{R}_\mathcal{L}(x+i\epsilon)f,f\rangle\right).
\end{equation}
From~\cref{eqn:jump_relation}, we know that as $\epsilon\downarrow 0$ we have $\mu_f^\epsilon\rightarrow \mu_f$ in the sense of weak convergence of measures. Moreover, if $\mu_f$ has some additional local regularity about a point $x_0\in\mathbb{R}$, then $\mu_f^{\epsilon}(x_0)\rightarrow \rho_f(x_0)$ as $\epsilon\downarrow0$ (see~\cref{thm:poisson_rates}). There is a two-step procedure for evaluating $\mu_f^\epsilon(x_0)$ at some $x_0\in\mathbb{R}$, which is immediate from~\cref{eqn:smoothed_meas}: 
\begin{enumerate}
\item Solve the shifted linear equation for $u^\epsilon$: 
\begin{equation}\label{eqn:shift_sys}
(\mathcal{L}-x_0-i\epsilon)u^\epsilon=f, \qquad u^\epsilon\in\mathcal{D}(\mathcal{L}).
\end{equation}
\item Compute the inner product $\mu_f^{\epsilon}(x_0) = \tfrac{1}{\pi} {\rm Im} (\langle u^\epsilon,f\rangle)$. 
\end{enumerate} 
In practice, the smaller $\epsilon\!>\!0$, the more computationally expensive it is to evaluate~\cref{eqn:smoothed_meas} because if $x_0\in\Lambda(\mathcal{L})$ then the resolvent operator $\mathcal{R}_{\mathcal{L}}(x_0+i\epsilon)$ is unbounded in the limit $\epsilon \downarrow 0$.  One often computes $\mu_f^{\epsilon}(x_0)$ for successively smaller $\epsilon$ to obtain a sequence that converges to $\mu_f(x_0)$. For example, Richardson's extrapolation can improve the convergence rate in $\epsilon$~\cite{colbrook2019computing}, which can be proven using the machinery of~\cref{sec:HigherOrderKernels}.

Typically, one wants to sample $\mu_f^\epsilon$ at several points $x_1,\dots,x_m\in\mathbb{R}$, and then construct a local or global representation of $\mu_f^{\epsilon}$ for visualization or further computations. If one wants to visualize $\mu_f^{\epsilon}$ in an interval, then we recommend evaluating at equispaced points in that interval. However, when one wants to calculate an integral with respect to $\mu_f^\epsilon$, it is better to evaluate $\mu_f^{\epsilon}$ at quadrature nodes (see~\cref{sec:alt_convergence}). Note that if $x_j\not\in\Lambda(\mathcal{L})$, then $\mu_f^\epsilon(x_j)\rightarrow 0$ as $\epsilon\downarrow 0$ (for example, see~\cref{fig:fred_int_meas}).

Although singular continuous spectrum may appear to be an exotic phenomenon, it occurs in applications of practical interest. For example, discrete Schr\"odinger operators with aperiodic potentials on $\ell^2(\mathbb{Z})$ (such as the Fibonacci Hamiltonian) can have spectra that are Cantor sets with purely singular continuous spectral measures (see~\cite{avila2009ten,gordon1997duality,sutHo1989singular,damanik1998singular,damanik2015spectral,puelz2015spectral}). When $\Lambda(\mathcal{L})$ has a non-zero singular continuous component, $\mu_f^\epsilon\rightarrow\mu_f$ weakly as $\epsilon\downarrow 0$ and our algorithms can compute $\mu_f(U)$ (for open sets $U$) and the functional calculus of $\mathcal{L}$.\footnote{In general, it is also impossible to design a black-box method that separates the singular continuous component of $\mu_f$ from the other components. This is made precise in \cite{colbrook2019computing}, which uses the framework of the Solvability Complexity Index (SCI) hierarchy \cite{colbrook2019infinite,colbrook3,colbrookthesis}.}

\subsection{Evaluating the spectral measure of an integral operator}\label{sec:FredholmIntegralOperator} 
To illustrate our evaluation strategy, consider the integral operator defined by
\begin{equation}\label{eqn:fred_int}
[\mathcal{L}u](x)=x u(x)+\int_{-1}^1 e^{-(x^2+y^2)}u(y)\,dy, \qquad x\in [-1,1], \qquad u\in L^2([-1,1]).
\end{equation}
The integral operator $\mathcal{L}$ in~\cref{eqn:fred_int} has continuous spectrum in $[-1,1]$, due to the $x u(x)$ term, and discrete spectrum in $\mathbb{R}\setminus [-1,1]$ from the integral term (a compact perturbation~\cite{kato2013perturbation}).~\Cref{fig:fred_int_meas} (left) shows three smoothed approximations of $\mu_f$ with $f(x)=\sqrt{3/2}\,x$, for smoothing parameter $\epsilon=0.1,0.01$, and $0.001$. We see the presence of an eigenvalue near $x\approx 1.37$ from a spike in the smoothed measure that approximates a Dirac delta . 

\begin{figure}[!tbp]
  \centering
  \begin{minipage}[b]{0.48\textwidth}
    \begin{overpic}[width=\textwidth]{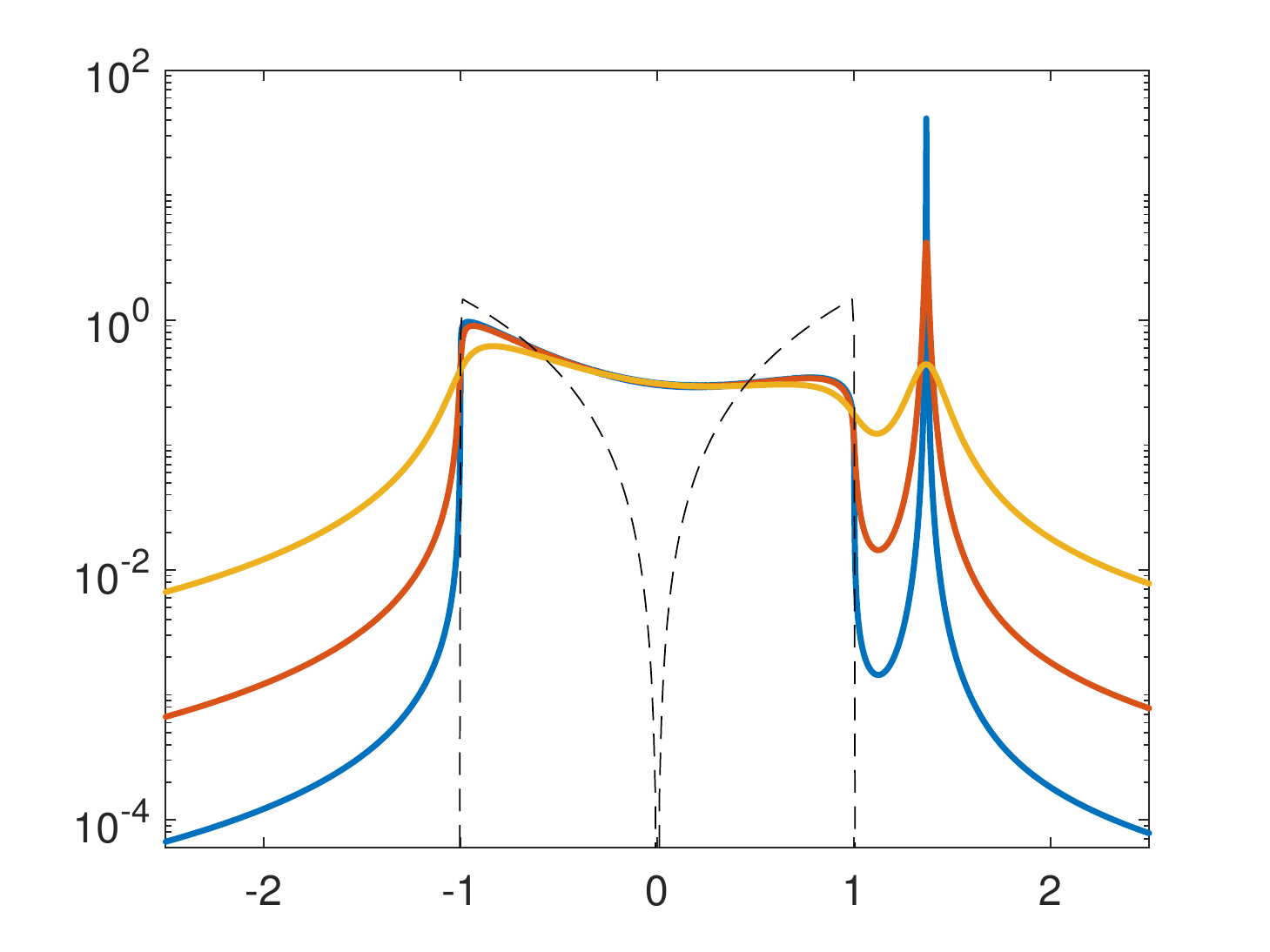}
     \put (47,73) {$\displaystyle \mu_f^\epsilon(x)$}
     \put (50,-2) {$\displaystyle x$}
    \put(13,10) {\rotatebox{30}{$\epsilon = 0.001$}}
    \put(13,20) {\rotatebox{27}{$\epsilon = 0.01$}}
       \put(13,30) {\rotatebox{24}{$\epsilon = 0.1$}}
     \end{overpic}
  \end{minipage}
  \hfill
  \begin{minipage}[b]{0.48\textwidth}
    \begin{overpic}[width=\textwidth]{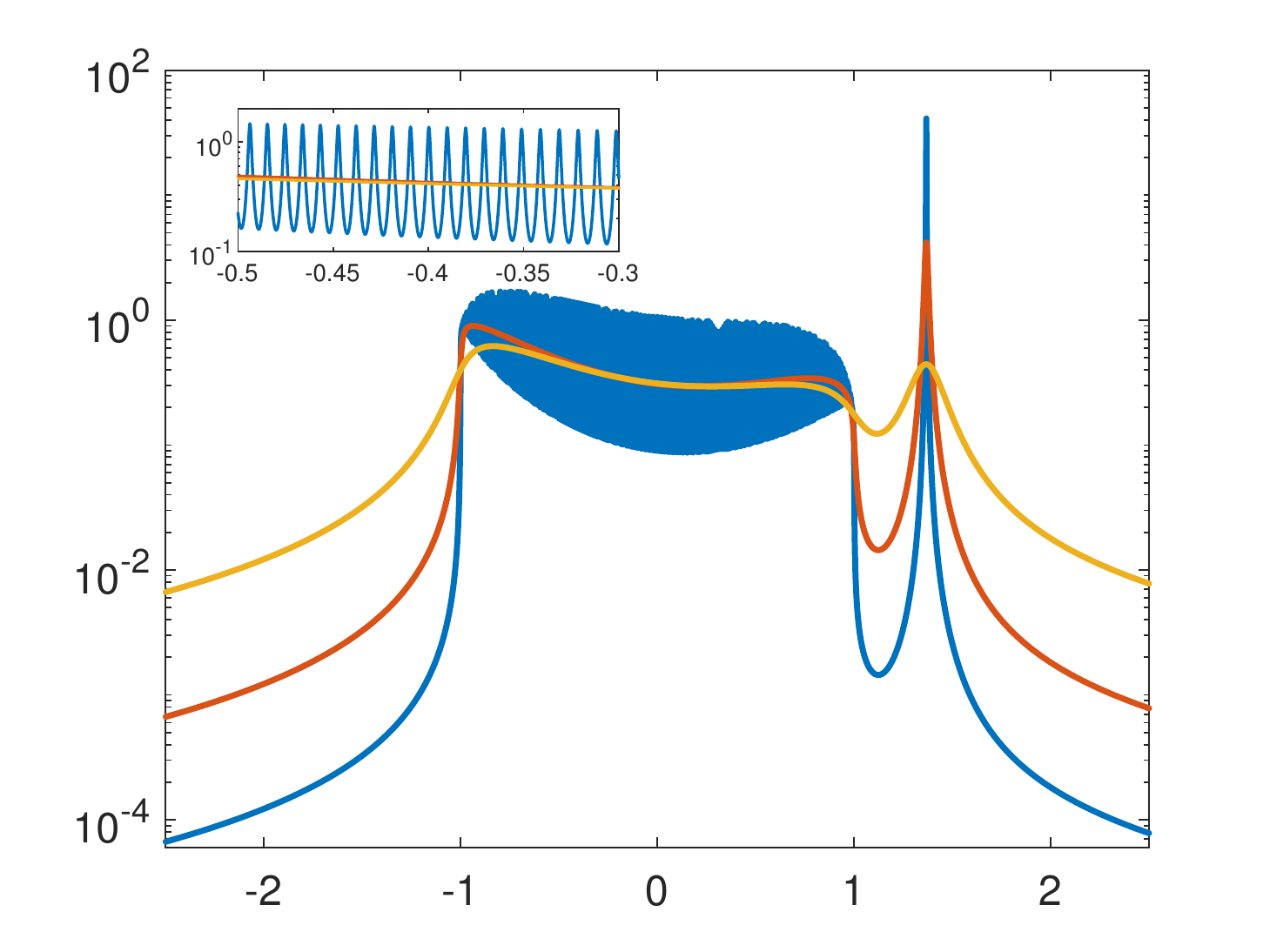}
    \put (47,73) {$\displaystyle \mu_f^\epsilon(x)$}
     \put (50,-2) {$\displaystyle x$}
        \put(13,10) {\rotatebox{30}{$\epsilon = 0.001$}}
    \put(13,20) {\rotatebox{27}{$\epsilon = 0.01$}}
       \put(13,30) {\rotatebox{24}{$\epsilon = 0.1$}}
     \end{overpic}
  \end{minipage}
  \caption{Left: The smoothed approximation $\smash{\mu_f^\epsilon}$ for the integral operator in~\cref{eqn:fred_int} and different $\epsilon$. The discretization sizes for solving the shifted linear systems are adaptively selected. The dashed line corresponds to the spectral measure of the operator given by $u(x)\rightarrow xu(x)$. Adding the compact perturbation (the integral term) alters the shape of the measure over $[-1,1]$ and there is an additional eigenvalue near $x\approx 1.37$. Right: The same computation except with a fixed discretization size of $N=300$ to solve~\cref{eqn:shift_sys}. The magnified region shows spurious high-frequency oscillations for $\epsilon=0.001$, an artifact caused by the discrete spectrum of the underlying discretization.}
\label{fig:fred_int_meas}
\end{figure}

To perform the two-step procedure described above on a computer, one must discretize the operator $\mathcal{L}$, and we do this by discretizing $\mathcal{L}$ with an $N\times N$ matrix corresponding to an adaptive Chebyshev collocation scheme.\footnote{While $N\times N$ discretizations converge for Fredholm operators~\cite{joe1985discrete}, square truncations of spectral discretizations of operators may not always converge. Instead, one may need to take rectangular truncations to ensure that discretizations of $\mathcal{R}_\mathcal{L}(z)f$ converge~\cite{colbrook2019computing}.} While the precise discretization details are delayed until~\cref{sec:fred_disc}, \cref{fig:fred_int_meas} illustrates the critical role that $N$ plays when evaluating $\mu_f^{\epsilon}$. In particular, there are two limits to take in theory: $N\rightarrow \infty$ and $\epsilon\downarrow 0$. It is known that these two limits must be taken with considerable care~\cite{colbrook2019computing}. If $N$ is kept fixed as one takes $\epsilon\downarrow 0$, then the computed samples of $\mu_f^{\epsilon}$ do not converge (see~\Cref{fig:fred_int_meas} (right)) because the computed samples get polluted by the discrete spectrum of the discretization. Instead, as one takes $\epsilon\downarrow 0$, one must appropriately increase $N$ too. In practice, we increase $N$ by selecting it adaptively to ensure that we adequately resolve solutions to~\cref{eqn:shift_sys} (see~\cref{fig:fred_int_meas} (left)). The precise details on how we adequately resolve solutions are given in~\cref{sec:fred_disc}.

\subsection{Pointwise convergence of smoothed measure}\label{sec:smooth_approx}
It is known that if $\mu_f$ is locally absolutely continuous with continuous Radon--Nikodym derivative $\rho_f$ (see~\cref{eqn:spec_meas}), then $\mu_f^\epsilon$ converges pointwise to $\rho_f$~\cite[p. 22]{hoffman2007banach}. However, under additional smoothness assumptions on $\mu_f$, it is useful to understand how rapidly $\mu_f^\epsilon$ converges to $\mu_f$. The connection between $\mu_f^\epsilon$ and the Poisson kernel in~\cref{eqn:pk_identity} allows us to do this on intervals for which $\mu_f$ possesses some local regularity so that $\rho_f$ is H\"{o}lder continuous. We let $\mathcal{C}^{k,\alpha}(I)$ denote the H\"older space of functions that are $k$ times continuously differentiable on an interval $I$ with an $\alpha$-H\"older continuous $k$th derivative~\cite{evans2010partial}. For $h_1\in\mathcal{C}^{0,\alpha}(I)$ and $h_2\in\mathcal{C}^{k,\alpha}(I)$ we define the seminorm and norm, respectively, as
$$
|h_1|_{\mathcal{C}^{0,\alpha}(I)}=\sup_{x\neq y\in I}\frac{|h_1(x)-h_1(y)|}{|x-y|^{\alpha}},\quad \|h_2\|_{\mathcal{C}^{k,\alpha}(I)}=|h_2^{(k)}|_{\mathcal{C}^{0,\alpha}(I)}+\max_{0\leq j\leq k}\|h_2^{(j)}\|_{\infty,I}.
$$

\begin{theorem}\label{thm:poisson_rates}
Suppose that the measure $\mu_f$ in (\ref{eqn:spec_meas}) is absolutely continuous on the interval $I=(x_0-\eta,x_0+\eta)$ for some $x_0\in\mathbb{R}$ and $\eta>0$, let $\mu_f^\epsilon$ be defined as in~\cref{eqn:smoothed_meas}, and let $0\leq \alpha < 1$. If $\rho_f\in \mathcal{C}^{0,\alpha}(I)$, then
$$
|\rho_f(x_0)-\mu_f^\epsilon(x_0)|=\mathcal{O}(\epsilon^\alpha), \qquad \text{as}\quad \epsilon \downarrow 0.  
$$
\end{theorem}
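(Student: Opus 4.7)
The plan is to bound $\rho_f(x_0)-\mu_f^\epsilon(x_0)$ by exploiting the Poisson-kernel representation in \cref{eqn:pk_identity} together with the fact that the Poisson kernel $P_\epsilon(s) := \epsilon/[\pi(\epsilon^2+s^2)]$ is an approximate identity, i.e.\ it integrates to $1$ over $\mathbb{R}$. This lets us write
\[
\rho_f(x_0)-\mu_f^\epsilon(x_0) = \int_\mathbb{R} P_\epsilon(x_0-y)\bigl[\rho_f(x_0)\,dy - d\mu_f(y)\bigr],
\]
and the standard strategy for approximate-identity errors is to split $\mathbb{R}=I\cup(\mathbb{R}\setminus I)$, handling the local piece via the H\"older regularity and the tail by using that the kernel decays like $\epsilon/\eta^2$ uniformly on $\mathbb{R}\setminus I$.

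First I would treat the local piece on $I=(x_0-\eta,x_0+\eta)$. Since $\mu_f$ is absolutely continuous on $I$ with density $\rho_f$, this contribution equals $\int_I P_\epsilon(x_0-y)[\rho_f(x_0)-\rho_f(y)]\,dy$. Using $|\rho_f(x_0)-\rho_f(y)|\leq |\rho_f|_{\mathcal{C}^{0,\alpha}(I)}|x_0-y|^\alpha$ and substituting $y = x_0+\epsilon t$ bounds this by
\[
\frac{|\rho_f|_{\mathcal{C}^{0,\alpha}(I)}}{\pi}\,\epsilon^\alpha \int_{-\eta/\epsilon}^{\eta/\epsilon}\frac{|t|^\alpha}{1+t^2}\,dt \;\leq\; \frac{|\rho_f|_{\mathcal{C}^{0,\alpha}(I)}}{\pi}\,\epsilon^\alpha \int_{-\infty}^{\infty}\frac{|t|^\alpha}{1+t^2}\,dt.
\]
The last integral is finite precisely because $\alpha<1$ (the integrand decays like $|t|^{\alpha-2}$ at infinity and is bounded near $0$), which is exactly where the hypothesis $\alpha<1$ enters the argument — this is the main delicate point of the proof.

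Next I would handle the tail. The two tail contributions are $\rho_f(x_0)\int_{\mathbb{R}\setminus I}P_\epsilon(x_0-y)\,dy$ and $\int_{\mathbb{R}\setminus I}P_\epsilon(x_0-y)\,d\mu_f(y)$. For the first, a direct computation gives $\int_{\mathbb{R}\setminus I}P_\epsilon(x_0-y)\,dy = 1-\tfrac{2}{\pi}\arctan(\eta/\epsilon)$, which is $O(\epsilon/\eta)$ as $\epsilon\downarrow 0$ using the asymptotic $\pi/2-\arctan(x)=O(1/x)$ for large $x$. For the second, since $P_\epsilon(x_0-y)\leq \epsilon/(\pi\eta^2)$ for $y\notin I$ and $\mu_f$ is a probability measure, this integral is bounded by $\epsilon/(\pi\eta^2)$. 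Both tail contributions are therefore $O(\epsilon)$, which is dominated by the $O(\epsilon^\alpha)$ local term.

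Combining the local $O(\epsilon^\alpha)$ bound with the $O(\epsilon)$ tail bound yields $|\rho_f(x_0)-\mu_f^\epsilon(x_0)|=O(\epsilon^\alpha)$, with an implicit constant depending only on $\eta$, $|\rho_f|_{\mathcal{C}^{0,\alpha}(I)}$, $\|\rho_f\|_{\infty,I}$, and $\alpha$. The only genuine obstacle is the integrability of $|t|^\alpha/(1+t^2)$ at infinity, which fails at $\alpha=1$; for $\alpha=1$ one would need to extract a logarithmic factor, explaining the strict inequality $\alpha<1$ in the statement.
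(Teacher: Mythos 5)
Your proof is correct, and it reaches the stated bound by a slightly different route than the paper. The paper first decomposes the density itself, writing $\rho_f=\rho_1+\rho_2$ with a cutoff-function argument so that $\rho_1$ is supported in $I$, agrees with $\rho_f$ near $x_0$, and satisfies $|\rho_1|_{\mathcal{C}^{0,\alpha}(I)}\leq|\rho_f|_{\mathcal{C}^{0,\alpha}(I)}+C\eta^{-\alpha}\|\rho_f\|_{\infty,I}$; the error then splits into a convolution of the Poisson kernel with $\rho_1(x_0)-\rho_1(x_0-\cdot)$ over all of $\mathbb{R}$ plus a single remainder-measure term $\int_\mathbb{R}\epsilon\,d\mu_f^{(\mathrm{r})}(y)/(\epsilon^2+(x_0-y)^2)$, bounded by $\epsilon/(\epsilon^2+\eta^2/4)$. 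You instead keep $\rho_f$ as is and split the domain of integration, $\mathbb{R}=I\cup(\mathbb{R}\setminus I)$, using absolute continuity on $I$ for the local H\"older estimate and paying with two easy tail terms (the Poisson mass outside $I$ times $\rho_f(x_0)$, and the uniform bound $\epsilon/(\pi\eta^2)$ against the at-most-unit mass of $\mu_f$ off $I$), both $\mathcal{O}(\epsilon)$ for fixed $\eta$. Both arguments hinge on the same two estimates: the integrability of $|t|^\alpha/(1+t^2)$ for $\alpha<1$ (which the paper evaluates explicitly as $\pi\sec(\alpha\pi/2)$) and the $\mathcal{O}(\epsilon)$ decay of the kernel at distance $\gtrsim\eta$ from $x_0$, so they yield the same rate with comparable constants. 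What your version buys is the avoidance of the cutoff construction and its extra constant $C\eta^{-\alpha}\|\rho_f\|_{\infty,I}$; what the paper's version buys is a template that transfers directly to the higher-order kernels of \cref{thm:kernel_rates}, where one Taylor-expands a globally defined, compactly supported $\rho_1$ rather than working with $\rho_f$ only on $I$. Your closing remark about the failure at $\alpha=1$ and the resulting logarithmic factor is also consistent with the paper's discussion following the theorem.
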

\begin{proof}
First, decompose $\rho_f$ into two non-negative parts so that $\rho_f=\rho_1+\rho_2$, where the support of $\rho_1$ is in $I$ and $\rho_2$ vanishes on $(x_0-\eta/2,x_0+\eta/2)$. Since $\rho_f(x_0)=\rho_1(x_0)$ and the Poisson kernel integrates to $1$, we can use the convolution representation for $\mu_f^\epsilon$ (see~\cref{eqn:pk_identity} and~\cref{eqn:smoothed_meas}) and the commutativity of convolution to bound the approximation error as
\begin{equation}\label{eqn:approx_error}
\begin{aligned}
\pi|\rho_f(x_0)-\mu_f^\epsilon(x_0)|&= \! \left|\int_\mathbb{R}\!\frac{\epsilon}{\epsilon^2+y^2}\rho_1(x_0)dy - \int_\mathbb{R}\!\frac{\epsilon d\mu_f(y)}{\epsilon^2+(x_0-y)^2}\right| \\
&\leq \! \left|\int_\mathbb{R}\!\frac{\epsilon}{\epsilon^2+y^2}\left(\rho_1(x_0)-\rho_1(x_0-y)\right)\!dy\right| +
\int_\mathbb{R}\!\frac{\epsilon d\mu_f^{(\mathrm{r})}(y)}{\epsilon^2+(x_0-y)^2}.
\end{aligned}
\end{equation}
Here, $\smash{d\mu_f^{(\mathrm{r})}(y):=d\mu_f(y)-\rho_1(y)dy}$ is a non-negative measure with support in $\mathbb{R}\setminus(x_0-\eta/2,x_0+\eta/2)$. Since $\mu_f$ is a probability measure, we have that $\int_\mathbb{R}d\mu_f^{(\mathrm{r})}(y)\leq 1$, and the second term in~\cref{eqn:approx_error} is bounded via
\begin{equation}\label{eqn:approx_error_term1}
\int_\mathbb{R}\frac{\epsilon d\mu_f^{(\mathrm{r})}(y)}{\epsilon^2+(x_0-y)^2}= \int_{|x_0-y|\geq\eta/2}\frac{\epsilon d\mu_f^{(\mathrm{r})}(y)}{\epsilon^2+(x_0-y)^2}
\leq\frac{\epsilon}{\epsilon^2+\frac{\eta^2}{4}}.
\end{equation}
Since $\rho_f\in \mathcal{C}^{0,\alpha}(I)$, standard arguments using cutoff functions \cite{evans2010partial} show that we can choose $\rho_1$ so that $|\rho_1|_{\mathcal{C}^{0,\alpha}(I)}\leq|\rho_f|_{\mathcal{C}^{0,\alpha}(I)}+C\eta^{-\alpha}\|\rho_f\|_{\infty,I}$ for some universal constant $C$.  Consequently, we have that
$$
|\rho_1(x_0)-\rho_1(x_0-y)\rvert\leq|\rho_1|_{\mathcal{C}^{0,\alpha}(I)}|y|^\alpha\leq(|\rho_f|_{\mathcal{C}^{0,\alpha}(I)}+C\eta^{-\alpha}\|\rho_f\|_{\infty,I})|y|^\alpha.
$$
Substituting this bound into the first term on the right-hand side of~\cref{eqn:approx_error} and combining with~\cref{eqn:approx_error_term1}, yields
$$
|\rho_f(x_0)-\mu_f^\epsilon(x_0)|\leq \frac{|\rho_f|_{\mathcal{C}^{0,\alpha}(I)}+C\eta^{-\alpha}\|\rho_f\|_{\infty,I}}{\pi}\int_\mathbb{R}\frac{\epsilon}{\epsilon^2+y^2}|y|^\alpha\,dy + \frac{\epsilon}{\pi\left(\epsilon^2+\frac{\eta^2}{4}\right)}.
$$
Calculating the integral explicitly leads to
\begin{equation}\label{eqn:statement1}
|\rho_f(x_0)-\mu_f^\epsilon(x_0)|\leq\left(|\rho_f|_{\mathcal{C}^{0,\alpha}(I)}+C\eta^{-\alpha}\|\rho_f\|_{\infty,I}\right)\mathrm{sec}\!\left(\frac{\alpha\pi}{2}\right)\epsilon^\alpha + \frac{\epsilon}{\pi\left(\epsilon^2+\frac{\eta^2}{4}\right)}.
\end{equation}
The right-hand side of~\cref{eqn:statement1} is $\mathcal{O}(\epsilon^\alpha)$ as $\epsilon\downarrow 0$, which concludes the proof.
\end{proof}

In~\cref{thm:poisson_rates}, we see that the convergence rate of $|\rho_f(x_0) - \mu_f^\epsilon(x_0)|$ as $\epsilon\downarrow 0$ depends on the local regularity of $\mu_f$. One can also show (see~\cref{thm:kernel_rates}) that $|\rho_f(x_0)-\mu_f^\epsilon(x_0)|=\mathcal{O}(\epsilon\log(1/\epsilon))$ if $\rho_f\in \mathcal{C}^{1}(I)$ as well as the fact that any additional smoothness assumptions on $\rho_f$ no longer improve the convergence rate.\footnote{The logarithmic term occurs due to the non-integrability of $x/(\pi(x^2+1))$. One can also show that the error rate of $\mathcal{O}(\epsilon\log(1/\epsilon))$ is achieved if $\rho_f\in \mathcal{C}^{0,1}(I)$ is Lipschitz continuous.} Since our procedure is local, the convergence rate is not affected by far away discrete and singular continuous components of $\mu_f$. However, the convergence degrades near singular points in the spectral measure because the constants in~\cref{eqn:statement1} blow up as $\eta\rightarrow0$. While $|\rho_f(x_0)-\mu_f^\epsilon(x_0)|=\mathcal{O}(\epsilon^\alpha)$ in~\cref{thm:poisson_rates} is stated as an asymptotic statement, we can also obtain explicit bounds for adaptive selection of $\epsilon$ (see~\cref{thm:kernel_rates}).

\subsection{A numerical balancing act}\label{num_balance_act}
To explore the practical importance of the convergence rates in~\cref{thm:poisson_rates}, we examine the numerical cost associated with solving the shifted linear systems in~\cref{eqn:shift_sys}. When the real component of the shift is in the continuous spectrum of $\mathcal{L}$ and $\epsilon$ is small, we typically require large discretizations to avoid the situation observed in~\cref{fig:fred_int_meas} (right). There are many potential reasons why we require large discretization sizes as $\epsilon\downarrow0$. Here are two illustrative examples:
\begin{figure}[!tbp]
  \centering
  \begin{minipage}[b]{0.48\textwidth}
    \begin{overpic}[width=\textwidth]{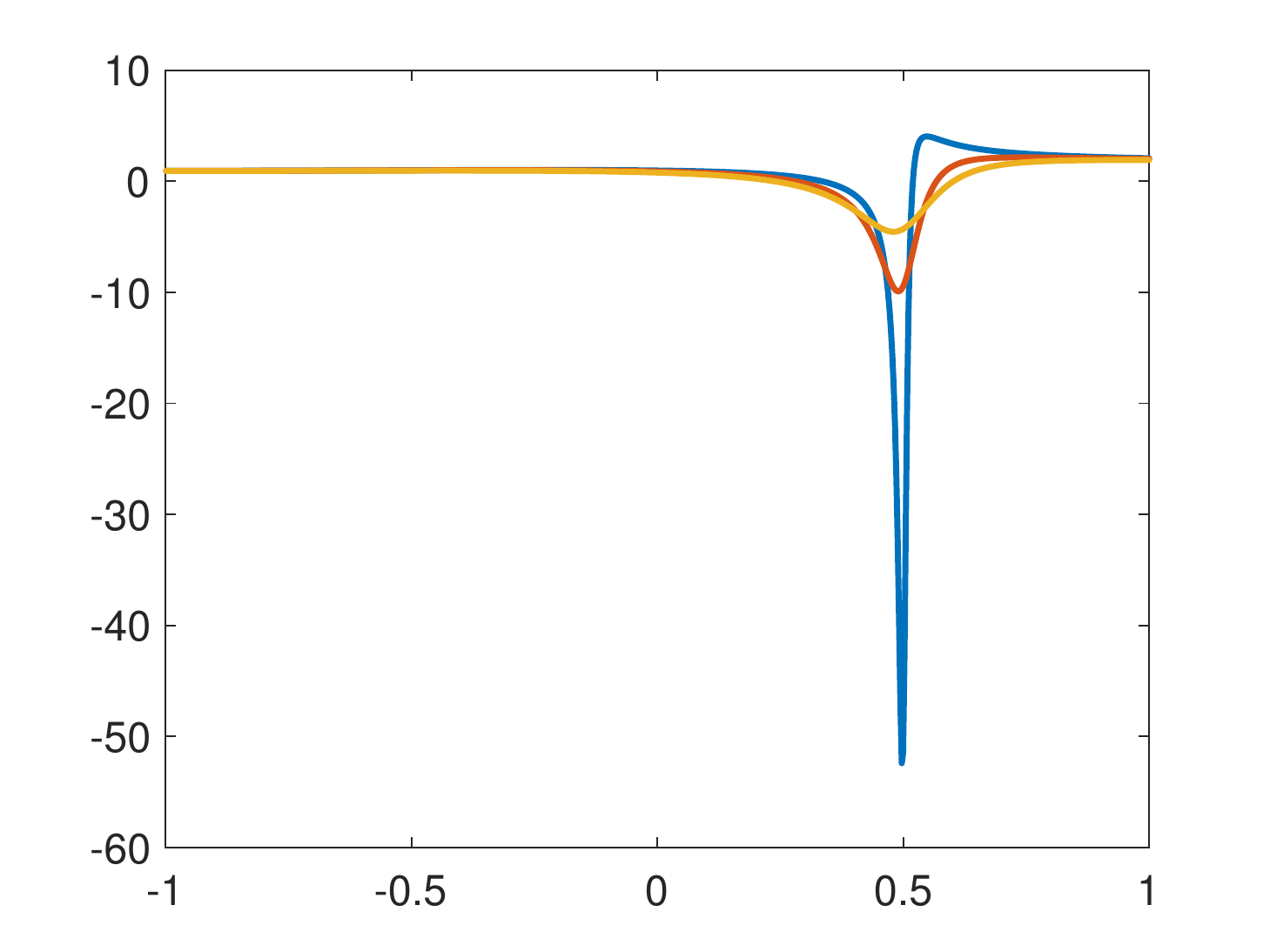}
 	\put (47,73) {$\displaystyle u^\epsilon(x)$}
 	\put (50,-2) {$\displaystyle x$}
 	\end{overpic}
  \end{minipage}
  \hfill
  \begin{minipage}[b]{0.48\textwidth}
    \begin{overpic}[width=\textwidth]{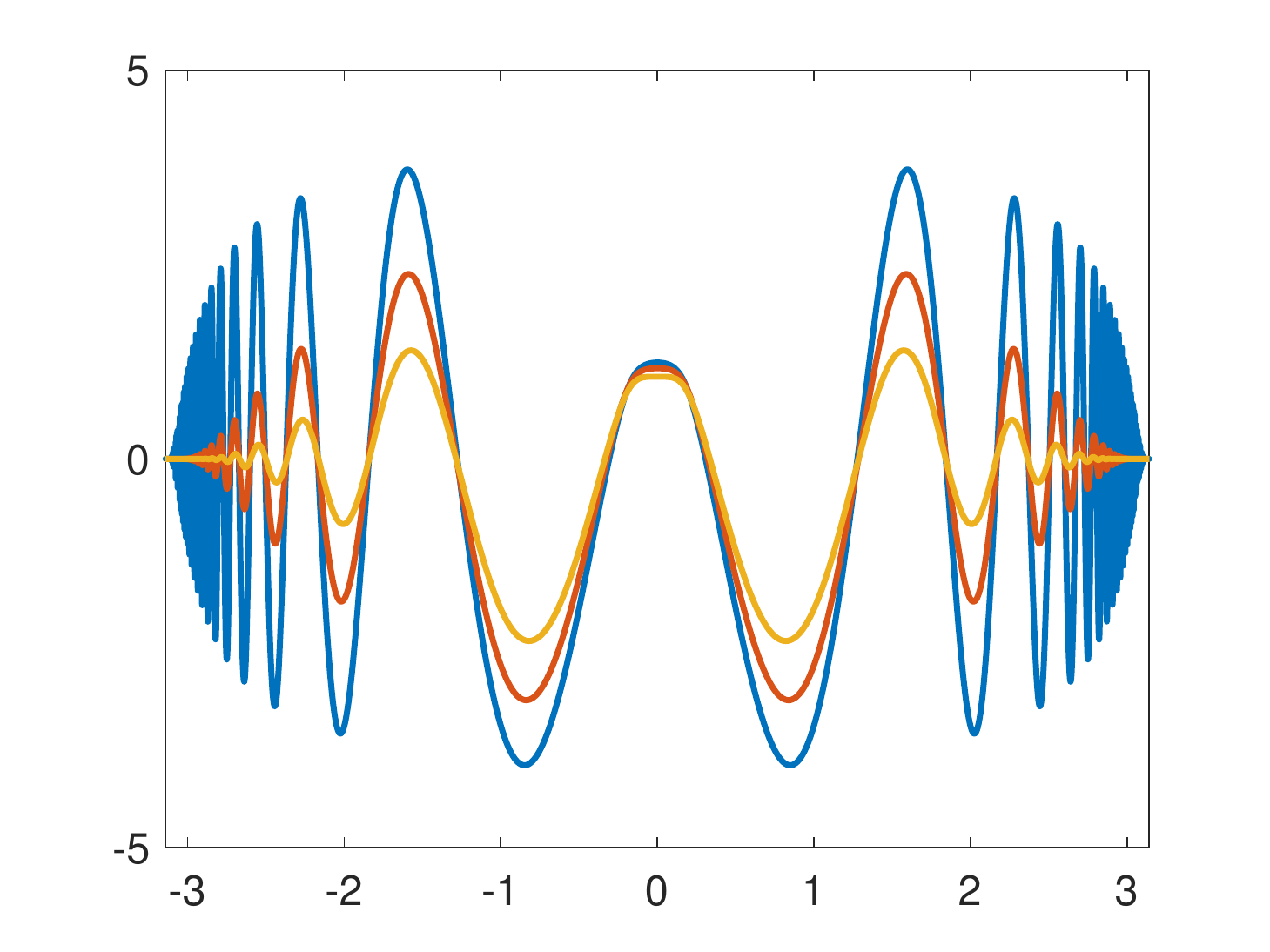}
    \put (47,73) {$\displaystyle u^\epsilon(\theta)$}
 	\put (50,-2) {$\displaystyle \theta$}
 	\end{overpic}
  \end{minipage}
  \caption{Real part of the numerical solutions to the shifted linear equations in~\cref{eqn:shift_sys} for the integral operator in~\cref{eqn:fred_int} (left) and the Schr\"odinger operator in~\cref{eqn:schrodinger} (right), with $\epsilon=0.1$ (yellow), $\epsilon=0.05$ (orange), and $\epsilon=0.01$ (blue). The solutions to~\cref{eqn:schrodinger} are mapped to $[-\pi,\pi]$ via $x=10i(1-e^{i\theta})/(1+e^{i\theta})$. We discretize using sparse, well-conditioned spectral methods and the discretization sizes are selected adaptively to accurately resolve $u^\epsilon(x)$ and $u^\epsilon(\theta)$.  \label{fig:singular_solns}}
\end{figure}

\paragraph{1) Interior layers} Revisiting the integral operator example in~\cref{eqn:fred_int}, we select $x_0=1/2$ in the continuous spectrum of $\mathcal{L}$, and $f(x)=\sqrt{3/2}\,x$. In \cref{fig:singular_solns} (left), we observe that the solution $u^\epsilon(x)$ develops an interior layer and blows up at $x_0=1/2$ as $\epsilon\downarrow 0$. The blow-up occurs because the multiplicative term in $\mathcal{L}-(x_0+i\epsilon)$ has a root at $x_0=1/2$ when $\epsilon=0$, giving rise to a pole in $u^\epsilon(x)$. For $\epsilon>0$, the pole of $u^\epsilon(x)$ is located at a distance of $\mathcal{O}(\epsilon)$ away from the real axis. A large discretization size is needed to resolve $u^\epsilon(x)$ for small $\epsilon$ due to the thin interior layer in $u^\epsilon(x)$. 

\paragraph{2) Oscillatory behavior} Consider the second-order differential operator given by
\begin{equation}\label{eqn:schrodinger}
[\mathcal{L}u](x)=-\frac{d^2u}{dx^2}(x)+\frac{x^2}{1+x^6}u(x), \qquad x\in\mathbb{R}.
\end{equation}
We select $x_0=0.3$ in the continuous spectrum of $\mathcal{L}$, and $f(x)=\sqrt{9/\pi}\cdot{}x^2/(1+x^6)$. In~\cref{fig:singular_solns} (right), we plot solutions mapped onto the domain $[-\pi,\pi]$ by the change-of-variables $x=10i(1-e^{i\theta})/(1+e^{i\theta})$. The solutions $u^\epsilon(x)$ are highly oscillatory with slow decay as $\theta\rightarrow\pm\pi$. As $\epsilon\downarrow 0$ the decay degrades and the persistent oscillations correspond to a transition in the nature of the singular points of~\cref{eqn:shift_sys} at $\pm\infty$. This means a large discretization is needed to resolve $u^\epsilon(x)$ for small $\epsilon$.

The dominating computational expense in evaluating $\mu_f^\epsilon$ is solving the shifted linear systems in~\cref{eqn:shift_sys}, and the cost of computing $u^\epsilon(x)$ generally increases as $\epsilon\downarrow 0$. There is a balancing act. On the one hand, we wish to stay as far away from the spectrum as possible, so that the evaluation of $\mu_f^\epsilon$ is computationally efficient. On the other hand, we desire samples of $\mu_f^\epsilon$ to be good approximations to $\rho_f$, which requires a small $\epsilon>0$. Even though we use sparse, well-conditioned spectral methods to discretize~\cref{eqn:shift_sys} (see~\cref{sec:practical_considerations}), the trade-off between computational cost and accuracy means that the slow convergence rate determined in~\cref{thm:poisson_rates} is a severe limitation. In~\cref{fig:N_vs_epsilon}, we explore the discretization sizes that are needed to evaluate spectral measures with the Poisson kernel accurately. For the integral operator in~\cref{eqn:fred_int} and $\epsilon = 0.05$, $0.01$, and $0.005$, we observe that we need $N=400$, $1700$, and $3100$, respectively (see~\cref{fig:N_vs_epsilon} (left)). Unfortunately, to obtain samples of the spectral measure with two digits of relative accuracy, we require that $\epsilon\approx0.01$ (see~\cref{fig:N_vs_epsilon}). For this example, we observe that we require $N\approx 20/\epsilon$ for small $\epsilon>0$, so it is computationally infeasible to obtain more than five or six digits of accuracy with the Poisson kernel.

\begin{figure}[!tbp]
  \centering
  \begin{minipage}[b]{0.48\textwidth}
    \begin{overpic}[width=\textwidth]{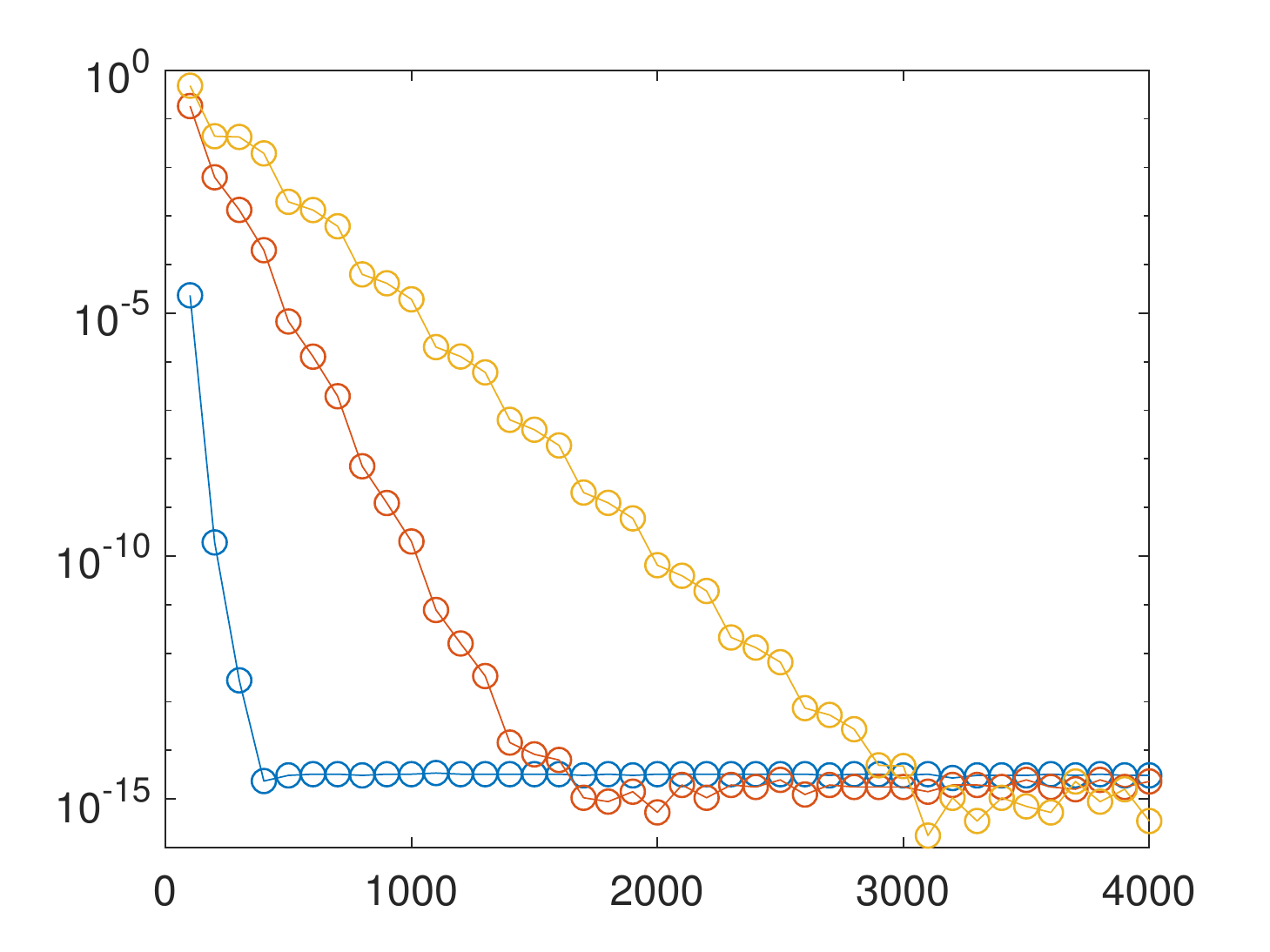}
 	\put (16,73) {$\displaystyle | \mu_{f,N}^\epsilon(x_0)-\mu_f^\epsilon(x_0)|/|\mu_f^\epsilon(x_0)|$}
 	\put (50,-2) {$\displaystyle N$}
 	\put (19,40) {\rotatebox{-80} {$\displaystyle \epsilon=0.05$}}
 	\put (30,42)  {\rotatebox{-60} {$\displaystyle \epsilon=0.01$}}
 	\put (41,46) {\rotatebox{-39} {$\displaystyle \epsilon=0.005$}}
 	\end{overpic}
  \end{minipage}
  \hfill
  \begin{minipage}[b]{0.48\textwidth}
    \begin{overpic}[width=\textwidth]{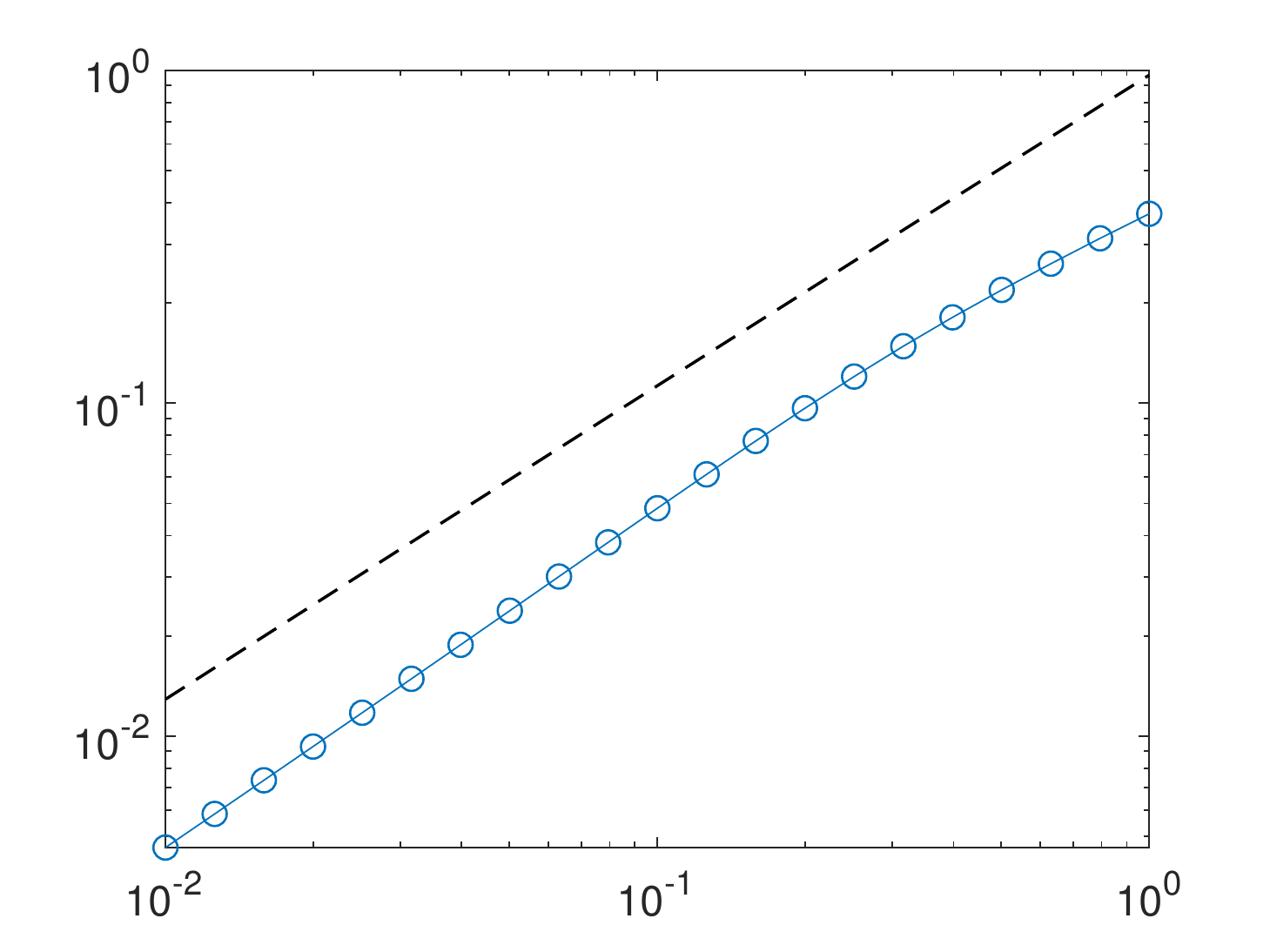}
    \put (20,73) {$\displaystyle |\rho_f(x_0)-\mu_f^\epsilon(x_0)|/|\rho_f(x_0)|$}
    \put (25,33)  {\rotatebox{33} {$\displaystyle \mathcal{O}(\epsilon\log(\epsilon^{-1}))$}}
 	\put (50,-2) {$\displaystyle \epsilon$}
 	\end{overpic}
  \end{minipage}
  \caption{Left: The relative error in the numerical approximation $\smash{\mu_{f,N}^\epsilon}$, corresponding to discretization size $N$, of the smoothed measure in~\eqref{eqn:smoothed_meas} for the integral operator in~\cref{eqn:fred_int} with $\epsilon=0.05$, $\epsilon=0.01$, and $\epsilon=0.005$. Right: The pointwise relative difference between the smoothed measure $\smash{\mu_f^\epsilon(x)}$ and the density $\rho_f(x)$, evaluated at $x_0=1/2$, compared with the $\mathcal{O}(\epsilon\log(\epsilon^{-1}))$ error bound in~\cref{thm:kernel_rates} for the integral operator in~\cref{eqn:fred_int}. The relative error is computed by comparing with a numerical solution that has been adaptively resolved to machine precision.\label{fig:N_vs_epsilon}}
\end{figure}

In addition to the computational cost of increasing $N$, the discretizations used to solve the linear systems in~\cref{eqn:shift_sys} become increasingly ill-conditioned when $x_0\in\Lambda(\mathcal{L})$ and $\epsilon\downarrow 0$ (a reflection of $\|\mathcal{R}_\mathcal{L}(x_0+i\epsilon)\|=\epsilon^{-1}$). This can limit the attainable accuracy. Moreover, the performance of iterative methods, if used to accelerate the solution of the large shifted linear systems, may also suffer. In our experience, the cost of increasing $N$ is usually the limiting factor and we rarely take $\epsilon<10^{-2}$.

\section{High-order kernels}\label{sec:HigherOrderKernels} 
~\cref{thm:poisson_rates} demonstrates that $\mu_f^\epsilon\rightarrow \rho_f$ pointwise in intervals for which $\mu_f$ is absolutely continuous with H\"older continuous density $\rho_f$, where the rate of convergence depends on the H\"older exponent of $\rho_f$. However, even when $\rho_f$ possesses additional regularity, the best rate of convergence for smoothed measures using the Poisson kernel is $\mathcal{O}(\epsilon\log(1/\epsilon))$. A natural question is: 
\begin{quote}
``Can we use other kernels to exploit additional regularity in $\mu_f$?''
\end{quote}

In this section, we construct kernels that can be used to compute smoothed measures that approximate $\rho_f$ to high-order in $\epsilon$ when $\rho_f$ is smooth. This allows us to obtain accurate samples of $\mu_f$ while avoiding extremely small $\epsilon$ and the associated computational cost of solving the shifted linear equations in~\cref{eqn:shift_sys} when the shifts are close to the real line. We use $K(x)$ to denote a kernel for which $K_\epsilon(x)=\epsilon^{-1}K(x/\epsilon)$ is an approximation to the identity, i.e., $K_\epsilon\rightarrow\delta$ as $\epsilon\downarrow 0$ in the sense of distributions~\cite[Ch.~3]{stein2011functional}, where $\delta$ is the Dirac delta distribution.

To gain intuition about the conditions that $K(x)$ must satisfy so that $K_\epsilon*\mu_f$ approximates $\mu_f$ to high-order, consider an absolutely continuous probability measure $\mu$ with density $\rho$ supported on an interval $I=(x_0-\eta,x_0+\eta)$, for some $x_0\in\mathbb{R}$ and $\eta>0$. The following argument is common in statistical non-parametric regression~\cite{wand1994kernel,tsybakov2008introduction}. Since we want $K_\epsilon$ to be an approximation to the identity, our first property is that $\int_{\mathbb{R}} K(x) dx = 1$. For further properties, we examine the approximation error 
$$
[K_\epsilon*\mu](x_0)-\rho(x_0)=\int_\mathbb{R} K_\epsilon(y)(\rho(x_0-y)-\rho(x_0))\,dy.
$$
Assuming that $\rho\in \mathcal{C}^{n,\alpha}(I)$ for some $0<\alpha<1$, we can use an $n$th order Taylor expansion of $\rho(x_0-y)-\rho(x_0)$ to rewrite the approximation error as
$$
[K_\epsilon*\mu](x_0)-\rho(x_0)=\sum_{k=1}^{n-1}\frac{(-1)^k\rho^{(k)}(x_0)}{k!}\int_\mathbb{R} K_\epsilon(y)y^k\,dy + \int_\mathbb{R}K_\epsilon(y)R_n(x_0,y)\,dy,
$$
where $R_n(x_0,y)$ denotes the $\mathcal{O}(|y|^n)$ remainder term in the Taylor series and $\rho^{(k)}$ is the $k$th derivative of $\rho$. The change-of-variables $y\rightarrow\epsilon y$ reveals that the $k$th term in the series is of size $\mathcal{O}(\epsilon^k)$, provided that $K(y)y^k$ is integrable. Meanwhile, the H\"older continunity of $\rho^{(n)}$ shows that the term involving $R_n(x_0,y)$ is of size $\mathcal{O}(\epsilon^{n+\alpha})$ provided that $K(y)y^{n+\alpha}$ is integrable and $\int_\mathbb{R}K(y)y^n\,dy=0$. Therefore, a kernel that achieves an $\mathcal{O}(\epsilon^{n+\alpha})$ approximation error has vanishing moments, i.e., $\int_\mathbb{R}K(y)y^k\,dy=0$ for $1\leq k\leq n.$

In practice, $\mu$ may not be absolutely continuous and its absolutely continuous part may have a density $\rho$ with singular points or unbounded support. As in~\cref{thm:poisson_rates}, we can deal with the general case by decomposing $\rho=\rho_1+\rho_2$ into two non-negative parts, where $\rho_1$ is sufficiently smooth and compactly supported on $I$, and where $\rho_2$ vanishes in a neighborhood of $x_0$. The cost of this decomposition is a second term in the approximation error (analogous to the second term on the right-hand side of~\cref{eqn:approx_error})
$$
[K_\epsilon*\mu](x_0)-\rho(x_0)=\int_\mathbb{R} K_\epsilon(y)(\rho_1(x_0-y)-\rho_1(x_0))\,dy + \int_\mathbb{R} K_\epsilon(x_0-y)\,d\mu^{(\mathrm{r})}(y),
$$
where $d\mu^{(\mathrm{r})}(y)=d\mu(y)-\rho_1(y)dy$. To ensure that this additional term does not dominate as $\epsilon\downarrow 0$, it is necessary that the kernel $K(y)$ decays at an appropriate rate as $|y|\rightarrow\infty$. This ensures that $K_\epsilon(x_0-y)$ is sufficiently small on the support of $d\mu^{(\mathrm{r})}(y)$ (see~\cref{eqn:approx_error_term1} for the decay in the Poisson kernel). Motivated by this discussion, we make the following definition (similar to~\cite[Def.~1.3]{tsybakov2008introduction}). 

\begin{definition}[$m$th order kernel]
\label{def:mth_order_kernel}
Let $m$ be a positive integer and $K\in L^1(\mathbb{R})$. We say $K$ is an $m$th order kernel if it satisfies the following properties:
\begin{itemize}[leftmargin=20pt]
	\item[(i)] Normalized: $\int_{\mathbb{R}}K(x)dx=1$.
	\item[(ii)]  Zero moments: $K(x)x^j$ is integrable and $\int_{\mathbb{R}}K(x)x^jdx=0$ for $0<j<m$.
	\item[(iii)] Decay at $\pm\infty$: There is a constant $C_K$, independent of $x$, such that
	\begin{equation}
	\label{decay_bound}
	\left|K(x)\right|\leq \frac{C_K}{(1+\left|x\right|)^{m+1}}, \qquad x\in \mathbb{R}.
	\end{equation}
\end{itemize}
\end{definition}

It is straightforward to verify that the Poisson kernel is a first-order kernel and the Gaussian kernel, i.e., $h(x)=(2\pi)^{-1/2}e^{-x^2/2}$, is a second-order kernel. While the Gaussian kernel plays an important role in DOS calculations~\cite{lin2016approximating} and kernel density estimation~\cite{silverman2018density}, it is not as useful in our framework since the evaluation of $h_\epsilon\ast\mu_f$ is not immediately related to pointwise evaluations of the resolvent (see~\cref{sec:rat_kernels}).

Since an $m$th order kernel, $K$, is an approximation to the identity, one can show that $K_\epsilon\ast\mu_f$ converges weakly to $\mu_f$. Moreover, in intervals where $\mu_f$ is absolutely continuous and sufficiently regular, $K_\epsilon\ast\mu_f$ converges pointwise to $\rho_f$ and the rate of convergence increases with the smoothness of $\rho_f$, up to a maximum of $\mathcal{O}(\epsilon^m\log(1/\epsilon))$.

\begin{theorem}
\label{thm:kernel_rates}
Let $K$ be an $m$th order kernel and suppose that the measure $\mu_f$ is absolutely continuous on $I=(x_0-\eta,x_0+\eta)$ for $\eta>0$ and a fixed $x_0\in\mathbb{R}$. Let $\rho_f$ be the Radon--Nikodym derivative of the absolutely continuous component of $\mu_f$, and suppose that $\rho_f\in\mathcal{C}^{n,\alpha}(I)$ with $\alpha\in[0,1)$. Denote the pointwise error by $E_{\epsilon}(x)=\left|\rho_f(x)-[K_\epsilon*\mu_f](x)\right|$. Then it holds that
\begin{enumerate}[leftmargin=*,noitemsep]
	\item[(i)] If $n+\alpha<m$, then, for a constant $C(n,\alpha)$ depending only on $n$ and $\alpha$,
\begin{equation}\label{pt_bd_cite1}
E_{\epsilon}(x_0)\!\leq \!\frac{C_K\epsilon^m}{(\epsilon+\frac{\eta}{2})^{m+1}}+C(n,\alpha)\|\rho_{f}\|_{\mathcal{C}^{n,\alpha}(I)}\!{\int_{\mathbb{R}}\!\!\left|K(y)\right|\!\left|y\right|^{n+\alpha}\!dy}\!\left(1+\eta^{-n-\alpha}\!\right)\!\epsilon^{n+\alpha}\!.
\end{equation}

\item[(ii)] If $n+\alpha \geq m$, then, for a constant $C(m)$ depending only on $m$,
\begin{equation}\label{pt_bd_cite2}
E_{\epsilon}(x_0)\!\leq \!\frac{C_K\epsilon^m}{(\epsilon+\frac{\eta}{2})^{m+1}}+C(m)\|\rho_{f}\|_{\mathcal{C}^{m}(I)}\!\!\left(\!\! C_K\!+\!\!{\int_{-\frac{\eta}{\epsilon}}^{\frac{\eta}{\epsilon}}\!\!\!\left|K(y)\right|\!\left|y\right|^m\!dy}\!\!\right)\!\!\left(1+\eta^{-m}\!\right)\!\epsilon^m.
\end{equation}

\end{enumerate}
Here, $C_K$ is from~\cref{decay_bound}.
\end{theorem}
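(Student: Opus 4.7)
My plan is to mirror the proof of \cref{thm:poisson_rates}, substituting the Poisson kernel with a general $m$th order $K$ and using the Taylor-expansion strategy described in the paragraphs preceding the statement. First I choose a smooth cutoff $\chi\in\mathcal{C}_c^\infty(I)$ that equals $1$ on $(x_0-\eta/2,x_0+\eta/2)$ and satisfies $\|\chi^{(k)}\|_\infty\leq C\eta^{-k}$, set $\rho_1=\chi\rho_f$ (extended by zero to $\mathbb{R}$), and define $d\mu^{(r)}=d\mu_f-\rho_1\,dy$. Standard Leibniz-rule estimates then give $\|\rho_1\|_{\mathcal{C}^{n,\alpha}(\mathbb{R})}\leq C(n,\alpha)\|\rho_f\|_{\mathcal{C}^{n,\alpha}(I)}(1+\eta^{-(n+\alpha)})$, and because $\mu_f$ is absolutely continuous on $I$ with density $\rho_f$, the measure $\mu^{(r)}$ is nonnegative, has total mass at most $1$, and is supported in $\mathbb{R}\setminus(x_0-\eta/2,x_0+\eta/2)$. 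Using $\rho_f(x_0)=\rho_1(x_0)$ I split
\[
E_\epsilon(x_0)\leq \bigl|\rho_1(x_0)-[K_\epsilon*\rho_1](x_0)\bigr|+\bigl|[K_\epsilon*\mu^{(r)}](x_0)\bigr|.
\]
The tail term is controlled immediately by~\cref{decay_bound}: for $|x_0-y|\geq\eta/2$, $|K_\epsilon(x_0-y)|\leq C_K\epsilon^m/(\epsilon+\eta/2)^{m+1}$, producing the first summand in both~\cref{pt_bd_cite1} and~\cref{pt_bd_cite2}.

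Using $\int K_\epsilon=1$, the main term rewrites as $\int K_\epsilon(y)[\rho_1(x_0)-\rho_1(x_0-y)]\,dy$. In case (i), I Taylor-expand $\rho_1(x_0-y)$ to order $n$ using the H\"older form of the remainder, $|R(y)|\leq C(n)|y|^{n+\alpha}\|\rho_1\|_{\mathcal{C}^{n,\alpha}(\mathbb{R})}$. Since $n+\alpha<m$ with $\alpha\in[0,1)$ forces $n\leq m-1$, the vanishing moments in~\Cref{def:mth_order_kernel}(ii) annihilate the polynomial terms. Rescaling $y=\epsilon u$, the remainder integrates against $|K(u)||u|^{n+\alpha}$, which is integrable by~\cref{decay_bound} precisely because $n+\alpha<m$. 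Combining with the cutoff norm estimate yields~\cref{pt_bd_cite1}.

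Case (ii) is the technical heart of the proof: $n+\alpha\geq m$ forces $n\geq m$, so $\rho_1\in\mathcal{C}^m(\mathbb{R})$, but now $\int|K(y)||y|^m\,dy$ may diverge logarithmically under~\cref{decay_bound}. Here I Taylor-expand to order $m-1$ with the coarser $\mathcal{C}^m$ remainder $|R(y)|\leq|y|^m\|\rho_1^{(m)}\|_\infty/m!$, apply vanishing moments over all of $\mathbb{R}$ as before, and split the resulting remainder integral at $|y|=\eta$. The inner piece $|y|\leq\eta$ contributes $\epsilon^m\int_{-\eta/\epsilon}^{\eta/\epsilon}|K(u)||u|^m\,du$ after the substitution $y=\epsilon u$. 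For $|y|>\eta$, the cutoff forces $\rho_1(x_0-y)=0$, so the integrand reduces to a linear combination of $K_\epsilon(y)y^k$ for $0\leq k\leq m-1$; each tail integral $\int_{|y|>\eta}|K_\epsilon(y)||y|^k\,dy$ evaluates via~\cref{decay_bound} to $O(C_K\epsilon^m/\eta^{m-k})$, and these bundle into the $C_K(1+\eta^{-m})\epsilon^m$ factor of~\cref{pt_bd_cite2} after invoking the $\mathcal{C}^m(I)$ cutoff-norm estimate. The main obstacle is the bookkeeping for these boundary corrections, but because every piece carries the same $\epsilon^m$ scaling they combine cleanly with the explicit $C_K$ constant in the stated bound.
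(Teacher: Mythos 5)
Your overall route is the paper's: the same cutoff decomposition $\rho_f=\rho_1+\rho_2$ with $\mu^{(\mathrm{r})}=\mu_f-\rho_1$, the same tail bound $\bigl|[K_\epsilon*\mu^{(\mathrm{r})}](x_0)\bigr|\leq C_K\epsilon^m/(\epsilon+\eta/2)^{m+1}$ from~\cref{decay_bound}, and in case (i) the same Taylor-plus-vanishing-moments argument (your ``H\"older remainder'' is exactly the paper's two-fold use of the moment conditions), so case (i) is fine. Your reorganization of case (ii) --- apply the moments over all of $\mathbb{R}$ first, then split the remainder integral at $|y|=\eta$ and use $\rho_1(x_0-y)=0$ outside --- produces term-by-term the same quantities as the paper's split.

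The gap is precisely the step you call ``the main obstacle'' and then assert away. The outer-region pieces are $\sum_{j=0}^{m-1}\frac{|\rho_1^{(j)}(x_0)|}{j!}\int_{|y|>\eta}|K_\epsilon(y)||y|^j\,dy \lesssim \sum_{j=0}^{m-1}\frac{|\rho_1^{(j)}(x_0)|}{j!}\,\frac{C_K\,\epsilon^m}{\eta^{m-j}}$, and if you control $|\rho_1^{(j)}(x_0)|$ by your Leibniz cutoff-norm estimate $\|\rho_1\|_{\mathcal{C}^m}\leq C(m)\|\rho_f\|_{\mathcal{C}^m(I)}(1+\eta^{-m})$, the residual weights $\eta^{-(m-j)}$ survive and you end up with a constant of size up to $(1+\eta^{-m})\eta^{-m}$, i.e.\ $\eta^{-2m}$ --- strictly weaker than~\cref{pt_bd_cite2}, which has a single factor $(1+\eta^{-m})$. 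The missing ingredient is a bound on the Taylor coefficients that gains back $\eta^{m-j}$: the paper writes $\rho_1^{(j)}(x_0)$ as an $(m-j)$-fold iterated integral of $\rho_1^{(m)}$ starting from $x_0-\eta$, where $\rho_1$ and all its derivatives vanish by compact support, giving $|\rho_1^{(j)}(x_0)|\leq\eta^{m-j}\|\rho_1^{(m)}\|_\infty$ and hence the clean sum in~\cref{eqn:upper_bound2}. Within your own construction there is an even simpler repair: since $\chi\equiv1$ on $(x_0-\eta/2,x_0+\eta/2)$, you have $\rho_1^{(j)}(x_0)=\rho_f^{(j)}(x_0)$, so these coefficients are bounded by $\|\rho_f\|_{\mathcal{C}^m(I)}$ with no cutoff inflation, and $\eta^{j-m}\leq 1+\eta^{-m}$ then yields the stated bound (the inflated cutoff norm is only needed for the inner remainder piece, where it is harmless). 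Without one of these observations your argument still gives $\mathcal{O}(\epsilon^m\log(1/\epsilon))$ for fixed $\eta$, but not the explicit inequality~\cref{pt_bd_cite2} as stated.
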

\begin{proof}
See~\cref{subsec:pointwise_bounds}.
\end{proof} 

Using~\cref{decay_bound} to bound $|K(y)|$ in~\cref{pt_bd_cite1,pt_bd_cite2}, \Cref{thm:kernel_rates} shows that, under local regularity conditions near $x_0\in\mathbb{R}$ and for fixed $\eta>0$, an $m$th order kernel has
\[
\left|\rho_f(x_0)-[K_\epsilon*\mu_f](x_0)\right| = \mathcal{O}(\epsilon^{n+\alpha})+\mathcal{O}(\epsilon^{m}\log(1/\epsilon)),\quad \text{ as} \quad\epsilon\downarrow 0.
\]
The logarithmic term appears in the case that $K(x)x^{m}$ is not integrable.  The upper bounds on $E_\epsilon(x_0)$ in~\cref{thm:kernel_rates} deteriorate as the interval of regularity shrinks ($\eta\rightarrow 0$), which is to be expected.\footnote{Similar results to~\cref{thm:kernel_rates}, without the first term on the right-hand side of~\cref{pt_bd_cite1} and~\cref{pt_bd_cite2}, for absolutely continuous probability measures with globally H\"older continuous density functions are used in kernel density estimation in statistics (see, for example,~\cite[Prop.~1.2]{tsybakov2008introduction}).} 

\subsection{Rational kernels}\label{sec:rat_kernels}
Now that we know the necessary properties of a kernel $K$ so that $K_\epsilon*\mu_f$ achieves high-order convergence (see~\cref{def:mth_order_kernel}), we can develop a resolvent-based approach to approximately evaluate a spectral measure more efficiently. The key to our computational framework (see~\cref{sec:comput_meas}) is the connection between the smoothed measure and the resolvent in~\cref{eqn:pk_identity}. This relation allows us to compute the convolution of the measure $\mu_f$ with the Poisson kernel by evaluating the resolvent operator at the poles of the (rescaled) Poisson kernel. In other words, we can sample the smoothed measure by solving the shifted linear equations in~\cref{eqn:shift_sys}.

Using the identity in~\cref{eqn:fa_identity}, we can build generalizations of~\cref{eqn:pk_identity} for convolutions with rational functions. Suppose that the kernel $K$ is of the form
\begin{equation}
K(x)=\frac{1}{2\pi i}\sum_{j=1}^{n_1}\frac{\alpha_j}{x-a_j}-\frac{1}{2\pi i}\sum_{j=1}^{n_2}\frac{\beta_j}{x-b_j},
\label{eqn:rat_kernel_form}
\end{equation} 
where $a_1,\ldots,a_{n_1}$ are distinct points in the upper half-plane and $b_1,\ldots,b_{n_2}$ are distinct points in the lower half-plane. We restrict $K$ to have only simple poles to avoid having to compute powers of the resolvent. Using~\cref{eqn:fa_identity}, the convolution $K_{\epsilon}*\mu_f$ is given by
\begin{equation}
[K_{\epsilon}*\mu_f](x)=\frac{-1}{2\pi i}\left[\sum_{j=1}^{n_1}\alpha_j\langle \mathcal{R}_{\mathcal{L}}(x-\epsilon a_j)f,f \rangle-\sum_{j=1}^{n_2}\beta_j\langle \mathcal{R}_{\mathcal{L}}(x-\epsilon b_j)f,f \rangle\right].
\label{eq:mthConvFormula}
\end{equation}
Our goal is to choose the poles and residues in~\cref{eqn:rat_kernel_form} so that $K$ is an $m$th order kernel. Given an integer $m\geq 1$, we are interested in finding the smallest possible $n_1$ and $n_2$ in~\cref{eqn:rat_kernel_form} so that~\cref{eq:mthConvFormula} is as efficient to evaluate as possible.

We want $K(x)=\mathcal{O}(|x|^{-(m+1)})$ as $|x|\rightarrow\infty$, which forces linear constraints to hold between the $\alpha_1,\dots,\alpha_{n_2}$ and $\beta_1,\dots,\beta_{n_2}$ parameters, as follows. Generically, $K$ in~\cref{eqn:rat_kernel_form} is a type $(n_1+n_2-1,n_1+n_2)$ rational function, which means it can be written as the quotient of a degree $n_1+n_2-1$ polynomial and a degree $n_1+n_2$ polynomial. In this form, the coefficient of highest power of $x$ in the numerator is a multiple of
$$
\sum_{j=1}^{n_1}\alpha_j-\sum_{j=1}^{n_2}\beta_j,
$$
which must vanish for $K$ to have sufficient decay. Under this condition, we find that 
$$
K(x)x=\frac{1}{2\pi i}\sum_{j=1}^{n_1}\frac{\alpha_ja_j}{x-a_j}-\frac{1}{2\pi i}\sum_{j=1}^{n_2}\frac{\beta_jb_j}{x-b_j}.
$$
We can apply the same argument as before to see that when $m\geq2$, we require that
$$
\sum_{j=1}^{n_1}\alpha_ja_j-\sum_{j=1}^{n_2}\beta_jb_j=0.
$$
We repeat this process $m-1$ times (each time multiplying each term in the sum by the appropriate $a_j$ or $b_j$) to find that $K(x)=\mathcal{O}(|x|^{-(m+1)})$ as $|x|\rightarrow\infty$ if and only if
\begin{equation} 
\sum_{j=1}^{n_1} \alpha_j a_j^k=\sum_{j=1}^{n_2} \beta_j b_j^k, \qquad k=0,\dots,m-1.
\label{eqn:relations} 
\end{equation} 
Assuming~\cref{eqn:relations} is satisfied, the normalization and zero moment conditions (see~\cref{def:mth_order_kernel} (i) and (ii)) provide us with $m$ linear conditions on the moments of $K$, which can be computed explicitly via contour integration. Employing a semi-circle contour in the upper half-plane, applying Cauchy's residue theorem, and taking the radius of the semi-circle to infinity, we find that the moments are given in terms of the poles and residues of $K$, i.e.,
\[
\int_\mathbb{R} K(y)y^k\,dy = \sum_{j=1}^{n_1} \alpha_j a_j^k=\sum_{j=1}^{n_2} \beta_j b_j^k, \qquad k=0,\dots,m-1,
\]
where the second equality follows from~\cref{eqn:relations} or closing the contour in the lower half-plane. Therefore, the rational kernel in~\cref{eqn:rat_kernel_form} is an $m$th order kernel provided that the following (transposed) Vandermonde systems are satisfied:
\begin{equation}\label{eqn:vandermonde_condition}
\begin{pmatrix}
1 & \dots & 1 \\
a_1 & \dots & a_{n_1} \\
\vdots & \ddots & \vdots \\
a_1^{m-1} &  \dots & a_{n_1}^{m-1}
\end{pmatrix}
\!\!
\begin{pmatrix}
\alpha_1 \\ \alpha_2\\ \vdots \\ \alpha_{n_1}
\end{pmatrix}\!
=\!
\begin{pmatrix}
1 & \dots & 1 \\
b_1 & \dots & b_{n_2} \\
\vdots & \ddots & \vdots \\
b_1^{m-1} &  \dots & b_{n_2}^{m-1}
\end{pmatrix}
\!\!
\begin{pmatrix}
\beta_1 \\ \beta_2\\ \vdots \\ \beta_{n_2}
\end{pmatrix}
=\begin{pmatrix}
1 \\ 0 \\ \vdots \\0
\end{pmatrix}\!.
\end{equation}
The systems in~\cref{eqn:vandermonde_condition} are guaranteed to have solutions when $n_1,n_2\geq m$. For computational efficiency, we select $n_1 = n_2 = m$ poles in the upper and lower half-planes. The Poisson kernel fits into this setting with $m=1$, $a_1=\overline{b_1}=i$ and $\alpha_1=\beta_1=1$.

It may appear from~\cref{eq:mthConvFormula} that we need $2m$ resolvent evaluations to evaluate $K_\epsilon*\mu_f$ at a single point $x$. However, if the poles are selected so that $b_j = \overline{a_j}$ and $\beta_j=\overline{\alpha_j}$, then the conjugate symmetry of the resolvent, i.e., $\langle\mathcal{R}_\mathcal{L}(\bar z)f,f\rangle=\overline{\langle\mathcal{R}_\mathcal{L}(z)f,f\rangle}$, reduces the number of resolvent evaluations to $m$. With this choice, we find that
\[
[K_{\epsilon}*\mu_f](x)=\frac{-1}{\pi}\sum_{j=1}^{m}{\rm Im}\left(\alpha_j\,\langle \mathcal{R}_{\mathcal{L}}(x-\epsilon a_j)f,f \rangle\right),
\]
which is analogous to~\cref{eqn:smoothed_meas}. While the properties of an $m$th order kernel determine the number of poles and the residues of $K$ (see~\cref{eqn:vandermonde_condition}), the locations of the poles in the upper half-plane are left to our discretion.

\subsubsection{Equispaced poles}\label{sec:equi_pts}
\begin{table}
\renewcommand{\arraystretch}{1.6}
\centering
\begin{tabular}{l|c|c}
$m$ & $\pi K(x)\prod_{j=1}^m(x-a_j)(x-\overline{a_j})$ & $\{\alpha_1,\ldots,\alpha_{\ceil{m/2}}\}$\\
\hline
$2$ & $\frac{20}{9}$ & $\left\{\frac{1+3i}{2}\right\}$\\
$3$ &$-\frac{5}{4}x^2+\frac{65}{16}$ & $\left\{-2+i,5\right\}$\\
$4$ & $-\frac{3536}{625}x^2+\frac{21216}{3125}$ & $\left\{\frac{-39-65i}{24},\frac{17+85i}{8}\right\}$\\
$5$ & $\frac{130}{81}x^4 - \frac{12350}{729}x^2 + \frac{70720}{6561}$ & $\left\{\frac{15-10i}{4},\frac{-39+13i}{2},\frac{65}{2}\right\}$\\
$6$ & $\frac{1287600}{117649}x^4 - \frac{34336000}{823543}x^2 + \frac{667835200}{40353607}$ & $\left\{\frac{725+1015i}{192},\frac{-2775-6475i}{192},\frac{1073+7511i}{96}\right\}$\\
\end{tabular}
\caption{The numerators and residues of the first six rational kernels with equispaced poles (see~\cref{eqn:equi_poles}). We give the first ${\ceil{m/2}}$ residues because the others follow by the symmetry $\alpha_{m+1-j}=\overline{\alpha_j}$.\label{tab_kernel}}
\end{table}
\renewcommand{\arraystretch}{1}

As a natural extension of the Poisson kernel, whose two poles are at $\pm i$, we consider the family of $m$th order kernels with equispaced poles in the upper and lower half-planes given by
\begin{equation}\label{eqn:equi_poles}
a_j=\frac{2j}{m+1}-1+i, \qquad b_j=\overline{a_j}, \qquad 1\leq j\leq m.
\end{equation}
We then determine the residues by solving the Vandermonde system in~\cref{eqn:vandermonde_condition}. The first six kernels are plotted in~\cref{fig:kernels} (left) and are explicitly written down in~\cref{tab_kernel}. 

Empirically, we found that the choice in~\cref{eqn:equi_poles} performed slightly better than other natural choices such as Chebyshev points with an offset $+i$, rotated roots of unity or dyadic poles $a_j = i 2^{-j}$. Dyadic poles have the advantage that if $\epsilon$ is halved, the resolvent only needs to be computed at one additional point. The ill-conditioning of the Vandermonde system did not play a role for the values of $m$ here. Moreover, equispaced poles are particularly useful when one wishes to sample the smoothed measure $K_\epsilon*\mu_f$ over an interval since samples of the resolvent can be reused for different points in the interval. Finally, if $\epsilon$ is found to be insufficiently small, instead of re-evaluating the resolvent at $m$ points, one can add poles closer to the real axis (with a smaller $\epsilon$) and reuse the old resolvent evaluations. This effectively increases $m$, and hence the coefficients $\alpha_j$ need to be recomputed. This may be computationally beneficial since the cost of solving the Vandermonde system is typically negligible compared to the cost of evaluating the resolvent close to the real axis.

\begin{figure}[!tbp]
  \centering
  \begin{minipage}[b]{0.49\textwidth}
    \begin{overpic}[width=\textwidth]{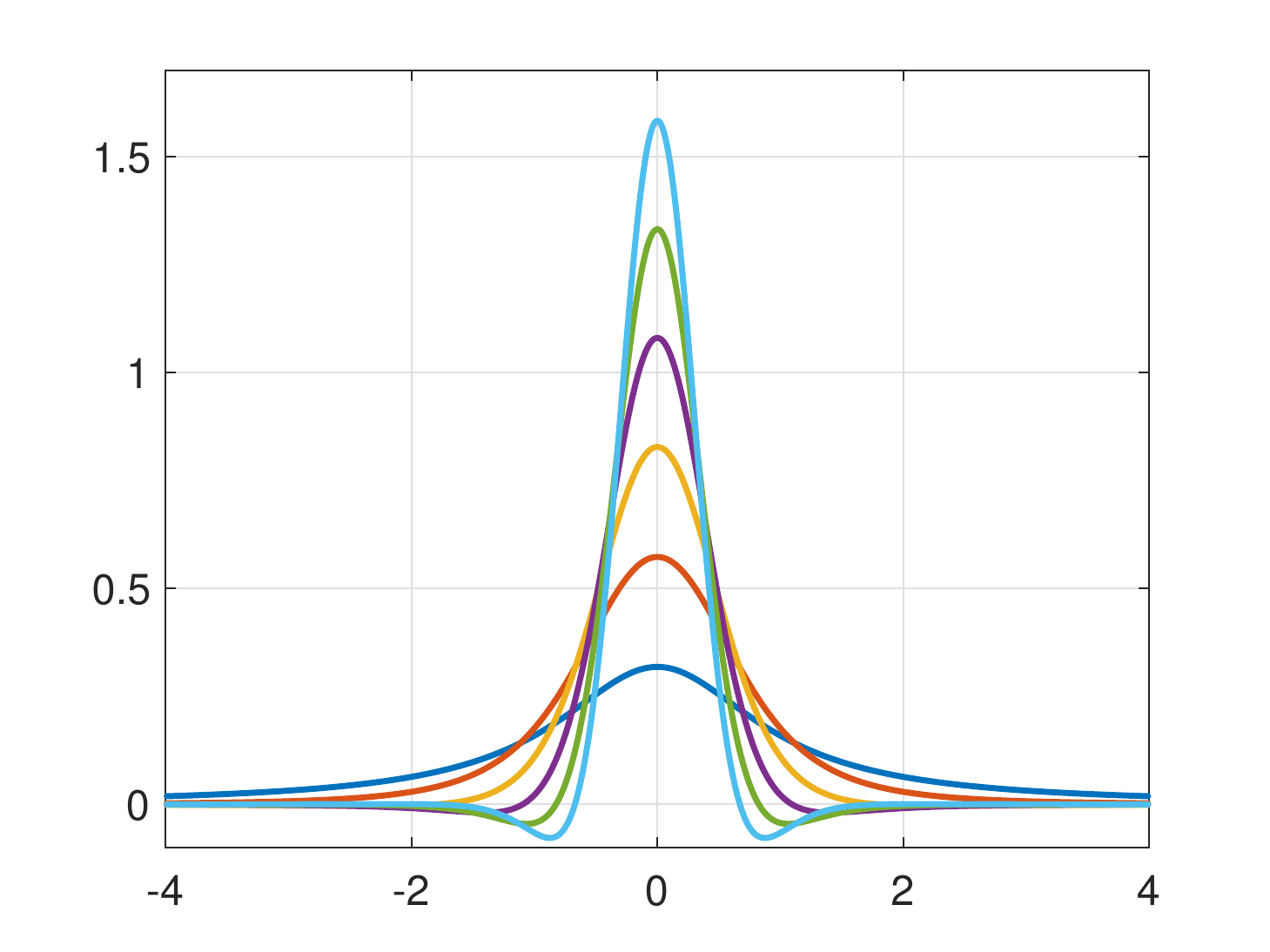}
 	\put (47,73) {$\displaystyle K(x)$}
 	\put (50,-2) {$\displaystyle x$}
	\put (66,23) { {$\displaystyle m=1$}}
	\put(66,23)  {\vector(-1,0){14}}
	\put (19,31.5) { {$\displaystyle m=2$}}
	\put(37.5,31.5)  {\vector(1,0){14}}
	\put (66,40) { {$\displaystyle m=3$}}
	\put(66,40)  {\vector(-1,0){14}}
	\put (19,48) { {$\displaystyle m=4$}}
	\put(37.5,48)  {\vector(1,0){14}}
	\put (66,57) { {$\displaystyle m=5$}}
	\put(66,57)  {\vector(-1,0){14}}
	\put (19,65.5) { {$\displaystyle m=6$}}
	\put(37.5,65.5)  {\vector(1,0){14}}
 	\end{overpic}
  \end{minipage}
\hfill
  \begin{minipage}[b]{0.48\textwidth}
    \begin{overpic}[width=\textwidth]{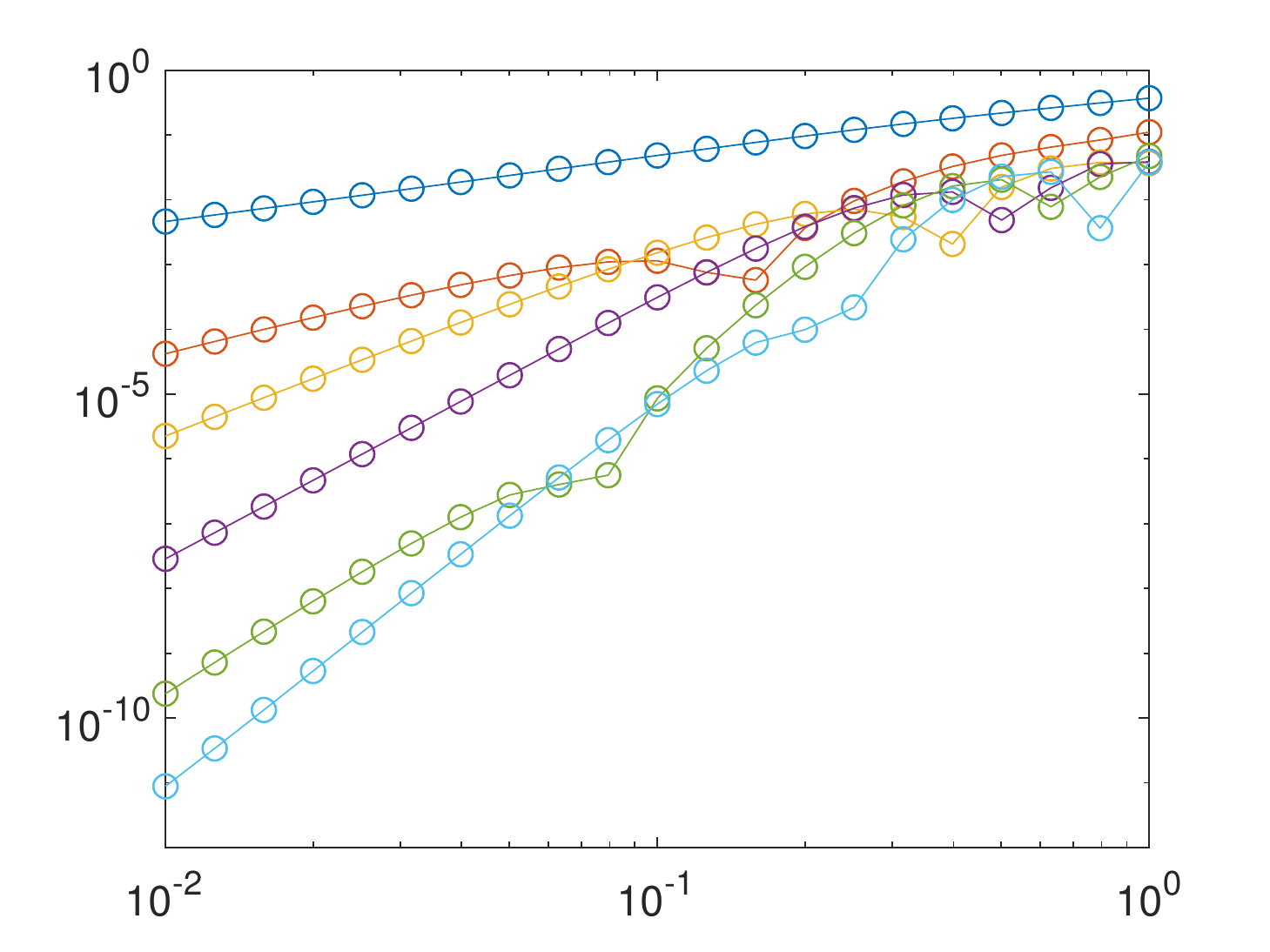}
 	\put (12,73) {$\displaystyle |\rho_f(x_0)-[K_\epsilon*\mu_f](x_0)|/|\rho_f(x_0)|$}
 	\put (50,-2) {$\displaystyle \epsilon$}
		\put (14,60) {\rotatebox{6} {$\displaystyle m=1$}}
	\put (14,49) {\rotatebox{14} {$\displaystyle m=2$}}
	\put (14,37) {\rotatebox{21} {$\displaystyle m=3$}}
	\put (14,27.5) {\rotatebox{29} {$\displaystyle m=4$}}
	\put (30,33) {\rotatebox{29} {$\displaystyle m=5$}}
	\put (14,10) {\rotatebox{38} {$\displaystyle m=6$}}
 	\end{overpic}
  \end{minipage}
  \caption{Left: The $m$th order kernels constructed from~\cref{eqn:vandermonde_condition} with poles in~\cref{eqn:equi_poles} for $1\leq m\leq 6$. Right: The pointwise relative error in smoothed measures of the integral operator in~\cref{eqn:fred_int} computed using the high-order kernels with poles in~\cref{eqn:equi_poles} for $1\leq m\leq 6$. The relative error is computed by comparing with a numerical solution that is resolved to machine precision.\label{fig:kernels}}
\end{figure}

\begin{figure}[!tbp]
  \centering
   \begin{minipage}[b]{0.48\textwidth}
    \begin{overpic}[width=\textwidth]{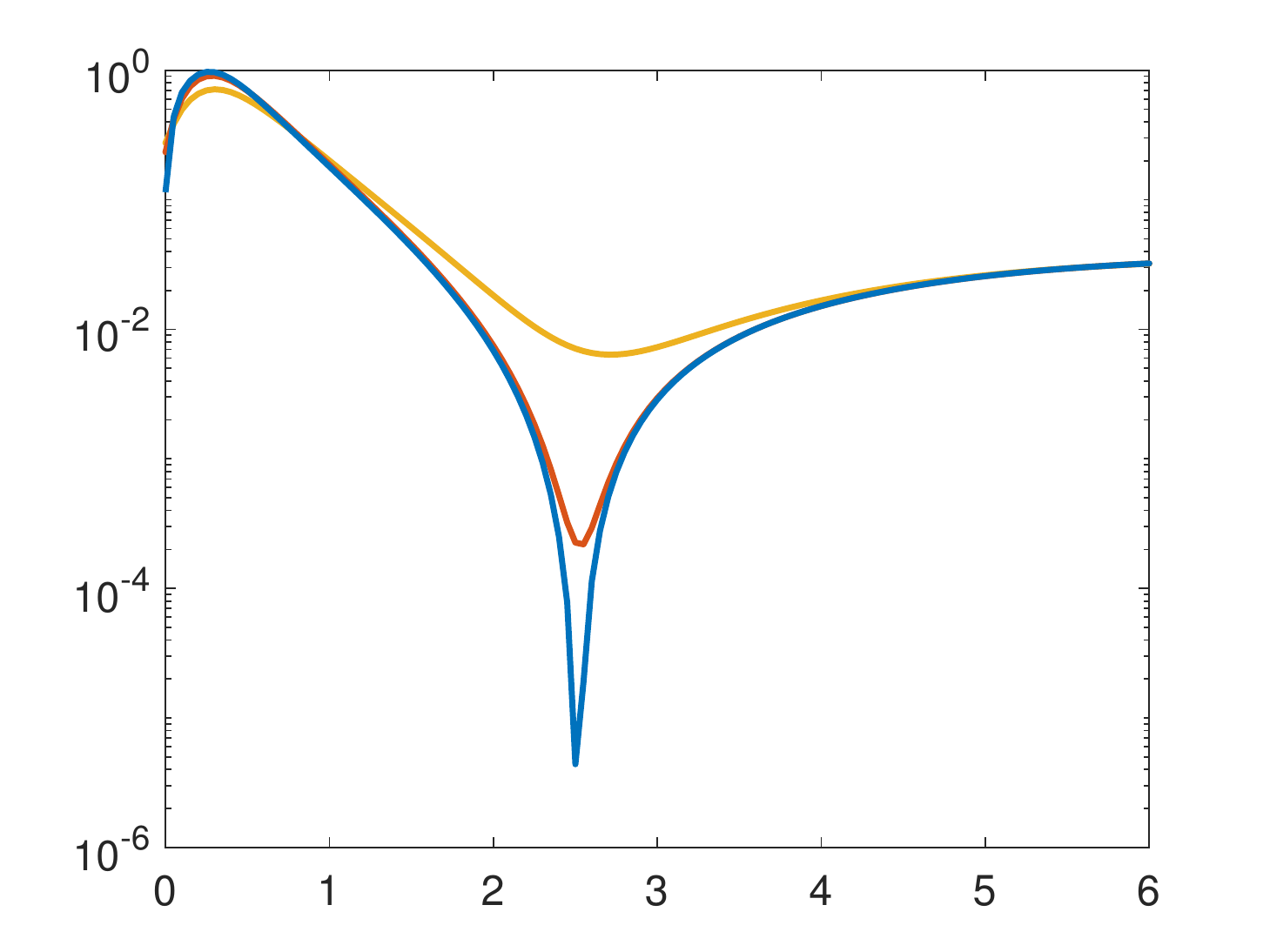}
 	\put (38,73) {$\displaystyle [K_\epsilon*\mu_f](x)$}
 	\put (50,-2) {$\displaystyle x$}
	\put (40,51) { {$\displaystyle m=1$}}
	\put (60,40) { {$\displaystyle m=2$}}
	\put(60,40)  {\vector(-2,-1){14}}
	\put (40,10) { {$\displaystyle m=6$}}
 	\end{overpic}
  \end{minipage}
  \hfill
  \begin{minipage}[b]{0.48\textwidth}
    \begin{overpic}[width=\textwidth]{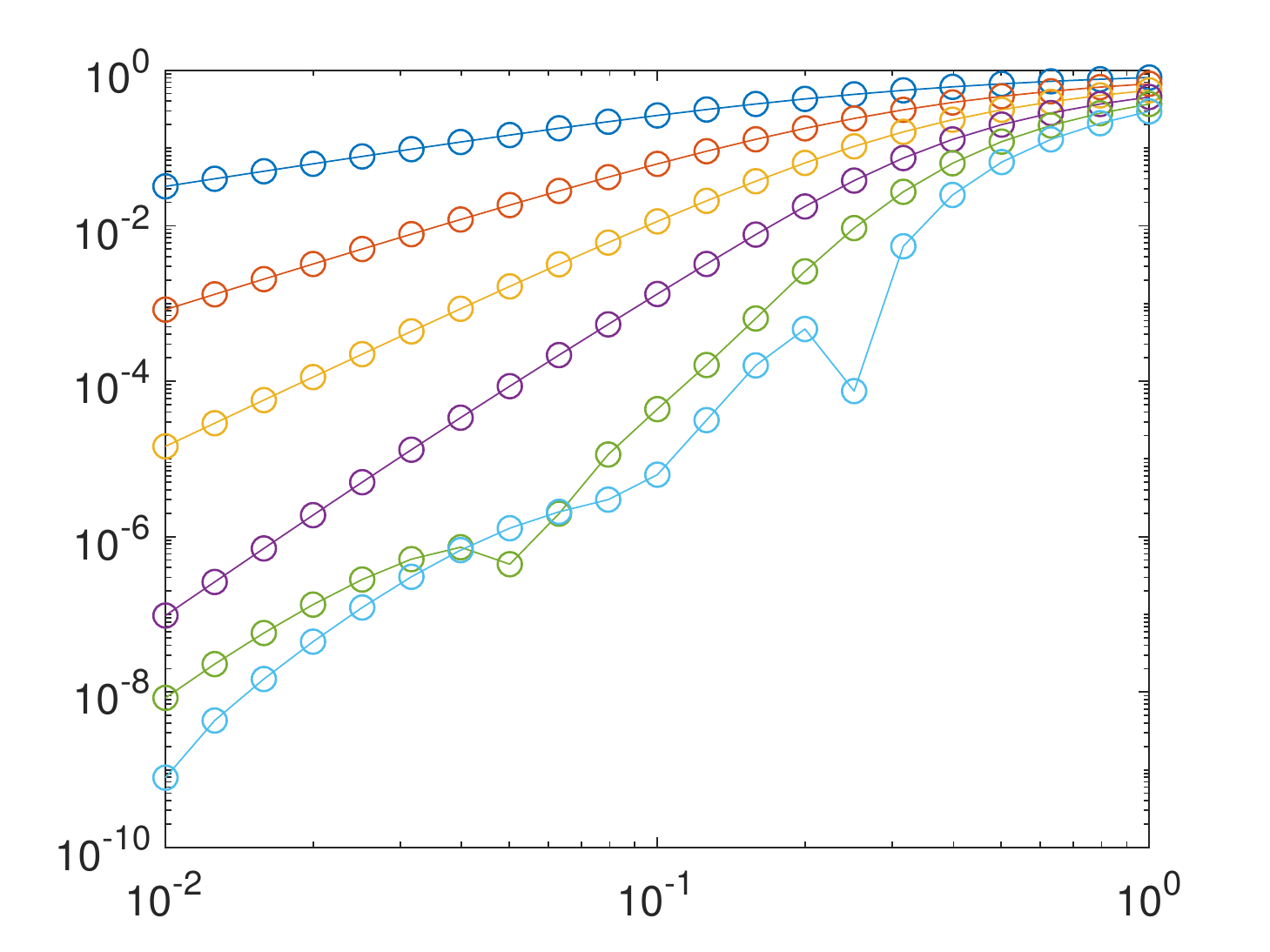}
    \put (12,73) {$\displaystyle |\rho_f(x_0)-[K_\epsilon*\mu_f](x_0)|/|\rho_f(x_0)|$}
 	\put (50,-2) {$\displaystyle \epsilon$}
	\put (14,62) {\rotatebox{7} {$\displaystyle m=1$}}
	\put (14,52.5) {\rotatebox{18} {$\displaystyle m=2$}}
	\put (14,42.5) {\rotatebox{26} {$\displaystyle m=3$}}
	\put (14,30.5) {\rotatebox{35} {$\displaystyle m=4$}}
	\put (41,37) {\rotatebox{41} {$\displaystyle m=5$}}
	\put (14,10.5) {\rotatebox{38} {$\displaystyle m=6$}}
 	\end{overpic}
  \end{minipage}
  \caption{Results for the Schr\"odinger operator in~\cref{eqn:schrodinger} using $m$th order kernels with equispaced poles (see~\cref{eqn:equi_poles}). Left: Smoothed approximations to the spectral measure. Right: Pointwise relative error, computed by comparing with a numerical solution resolved to machine precision.\label{fig:kernels2}}
\end{figure}

To demonstrate the practical advantage of high-order kernels, we revisit the examples from~\cref{sec:comput_meas} and compute the smoothed measure $K_\epsilon*\mu_f$ using $m$th order kernels with equispaced poles. In~\cref{fig:kernels} (right) and~\cref{fig:kernels2} (right), we observe the convergence rates predicted in~\cref{thm:kernel_rates} for the integral operator in~\cref{eqn:fred_int} and the differential operator in~\cref{eqn:schrodinger}, respectively. While the Poisson kernel requires us to solve linear equations with shifts extremely close to the continuous spectrum to achieve a few digits of accuracy in our approximation to $\rho_f$, a sixth-order kernel enables us to achieve about $11$ and $9$ digits of accuracy, respectively, without decreasing $\epsilon$ below $0.01$.~\Cref{fig:kernels2} (left) shows the increased resolution obtained when using high-order kernels for the differential operator in~\cref{eqn:schrodinger} with smoothing parameter $\epsilon=0.1$. Although using a sixth-order kernel requires six times as many resolvent evaluations as that of the Poisson kernel, this is typically favorable because the cost of evaluating the resolvent near the continuous spectrum of $\mathcal{L}$ increases as $\epsilon\downarrow 0$ (see~\cref{num_balance_act}). 

\begin{figure}[!tbp]
  \centering
  \begin{minipage}[b]{0.48\textwidth}
    \begin{overpic}[width=\textwidth]{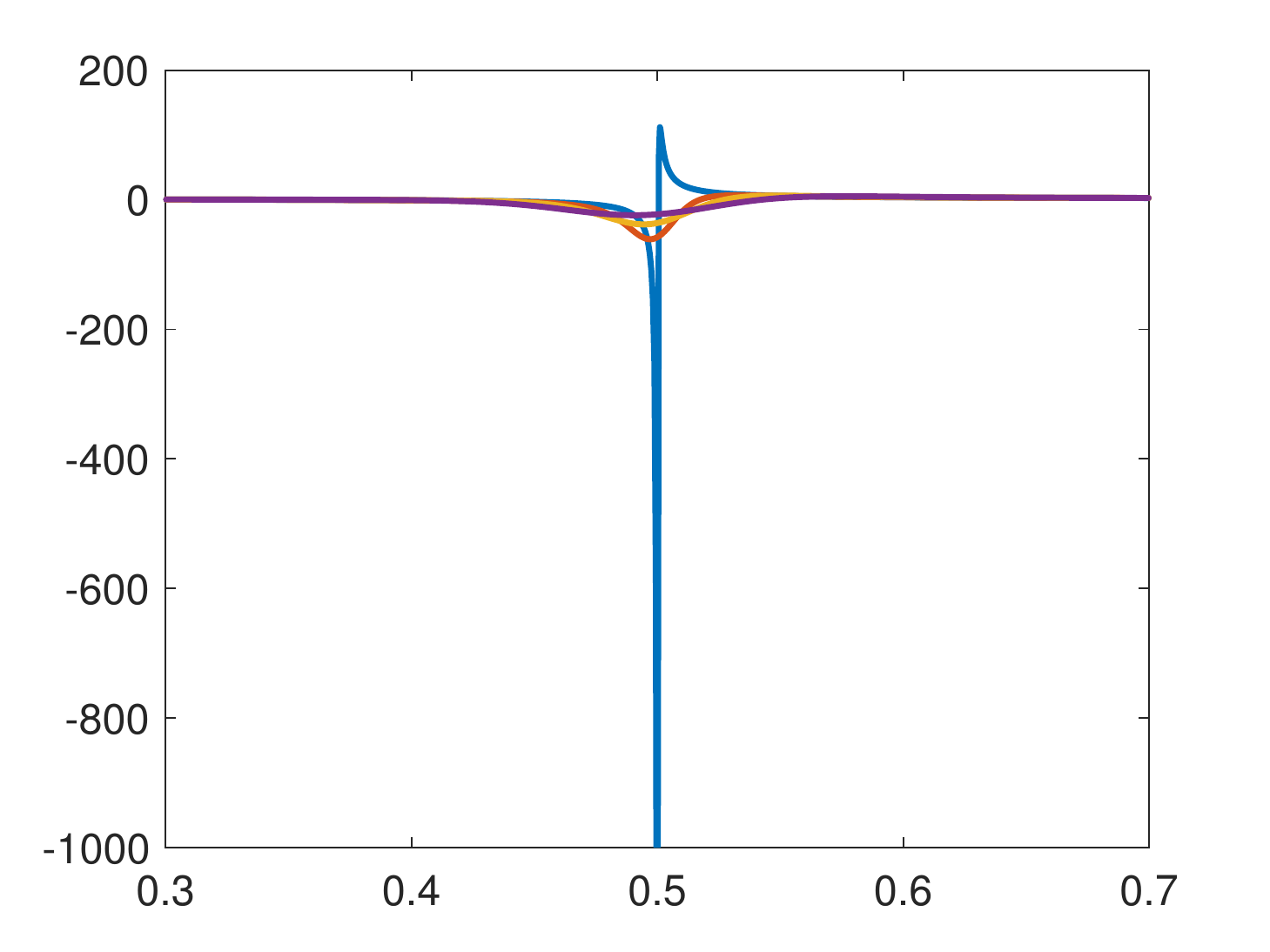}
 	\put (50,-2) {$\displaystyle x$}
 	\end{overpic}
  \end{minipage}
  \hfill
  \begin{minipage}[b]{0.48\textwidth}
    \begin{overpic}[width=\textwidth]{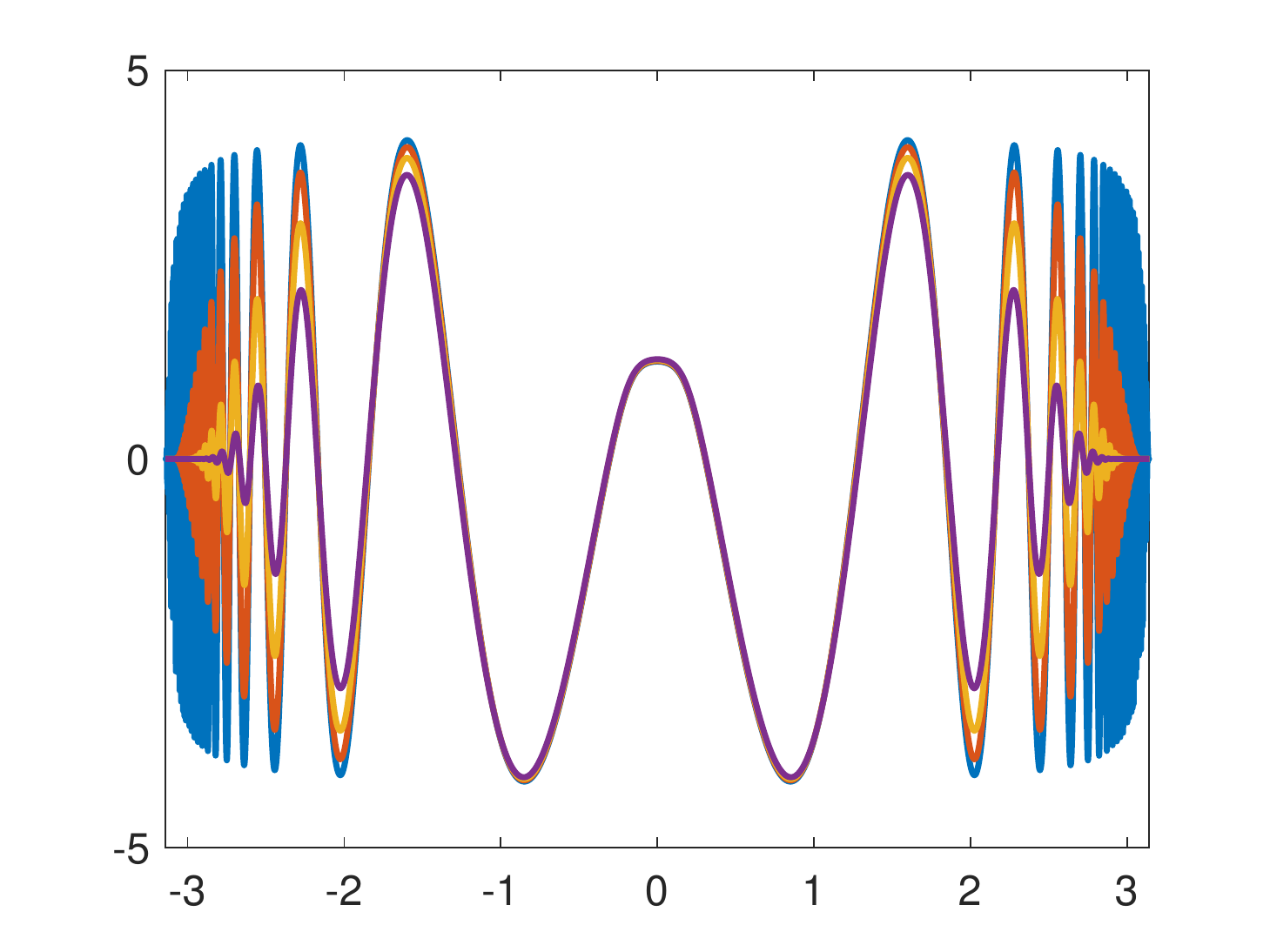}
 	\put (50,-2) {$\displaystyle \theta$}
 	\end{overpic}
  \end{minipage}
  \caption{Real part of $\sum_{j=1}^m\beta_j\mathcal{R}_{\mathcal{L}}(x-\epsilon b_j)f$, where $\epsilon$ is chosen to achieve a relative error of $0.0001$ for the integral operator in~\cref{eqn:fred_int} (left) and $0.005$ for the Schr\"odinger operator in~\cref{eqn:schrodinger} (right), with $m=1$ (blue), $m=2$ (yellow), $m=3$ (orange), and $m=4$ (purple). Recall that the solutions to~\cref{eqn:schrodinger} are mapped to $[-\pi,\pi]$ via $x=10i(1-e^{i\theta})/(1+e^{i\theta})$.\label{fig:singular_solns_new}}
\end{figure}

In~\cref{fig:singular_solns_new} (which should be compared to~\cref{fig:singular_solns}), we plot the real part of the linear combination of solutions, given by
$$
\mathrm{Re}\left(\sum_{j=1}^m\beta_j\mathcal{R}_{\mathcal{L}}(x-\epsilon b_j)f\right).
$$
Here, $\epsilon$ is selected to achieve a relative error of $0.0001$ and $0.005$ in the density of the integral and Schr\"odinger operators, respectively. For a fixed relative error, the high-order kernels lead to numerical solutions that are less peaked (or less oscillatory), which allows us to use much smaller discretizations of the linear operators.

\subsection{Other types of convergence}\label{sec:alt_convergence}
Consider the radial Schr\"odinger operator with a Hellmann potential and angular momentum quantum number $\ell$, given by~\cite{hellmann1935new}
\begin{equation}\label{eqn:RSE_op}
[\mathcal{L}u](r)=-\frac{d^2u}{dr^2}(r)+\left(\frac{\ell(\ell+1)}{r^2}+\frac{1}{r}(e^{-r}-1)\right)u(r), \qquad r>0.
\end{equation}
The spectral properties of $\mathcal{L}$ are of interest in quantum chemistry, where the Hellman potential models atomic and molecular ionization processes~\cite{hamzavi2013approximate}. Ionization rates and related transition probabilities are usually studied by computing bound and resonant states of $\mathcal{L}$; however, we compute this information directly from the spectral measure.

For example, if $\smash{f(r)=Ce^{-(r-r_0)^2}}$ (where $C$ is chosen so that $\|f\|_{L^2(\mathbb{R}_+)}=1$) is the radial component of the wave function of an electron interacting with an atomic core via the Hellmann potential in~\cref{eqn:RSE_op}, then we can calculate the probability that the electron escapes from the atomic core with energy $E\in[a,b]$ (with $0<a<b$) via
\begin{equation}\label{eqn:ion_prob}
\mathbb{P}(a\leq E\leq b)=\mu_f([a,b])\approx\int_a^b\,[K_\epsilon*\mu_f](y)\,dy, \qquad \epsilon\ll 1.
\end{equation}
The error for the approximation in~\cref{eqn:ion_prob} is bounded above by
\[
\left|\mu_f([a,b])-\int_a^b\,[K_\epsilon*\mu_f](y)\,dy\right|\leq \int_a^b|\rho_f(y)-[K_\epsilon*\mu_f](y)|\,dy =
\|\rho_f-K_\epsilon*\mu_f\|_{L^1([a,b])}.
\]
This leads us naturally to the notion of $L^p$ convergence on an interval. The smoothed measure always converges to $\rho_f$ in $L^1([a,b])$ when $\mu_f$ is absolutely continuous on $[a,b]$. However, in analogy with the pointwise results in~\cref{sec:smooth_approx} and~\cref{sec:HigherOrderKernels}, we need to impose some additional regularity on $\rho_f$ to obtain rates of convergence. We let $\mathcal{W}^{k,p}(I)$ denote the Sobolev space of functions in $L^{p}(I)$ such that $f$ and its weak derivatives up to order $k$ have a finite $L^p$ norm~\cite{evans2010partial}.
\begin{figure}[!tbp]
  \centering
  \begin{minipage}[b]{0.48\textwidth}
    \begin{overpic}[width=\textwidth]{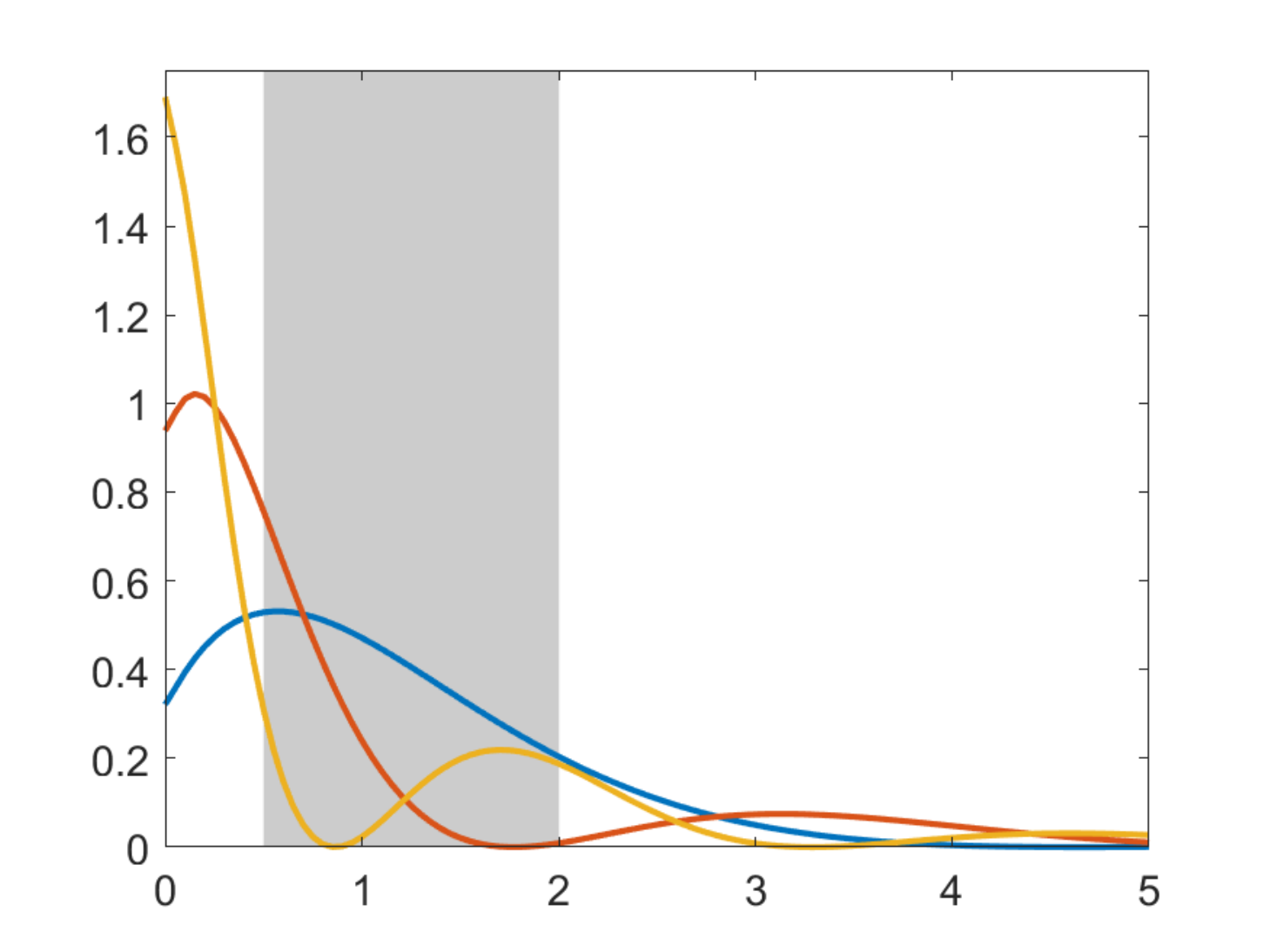}
 	\put (38,73) {$\displaystyle [K_\epsilon*\mu_f](x)$}
 	\put (50,-2) {$\displaystyle x$}
 	\put (16,68) {\rotatebox{-80} {$\displaystyle r_0=4$}}
 	\put (20,45)  {\rotatebox{-67} {$\displaystyle r_0=3$}}
 	\put (32,28) {\rotatebox{-35} {$\displaystyle r_0=2$}}
 	\end{overpic}
  \end{minipage}
  \hfill
  \begin{minipage}[b]{0.48\textwidth}
    \begin{overpic}[width=\textwidth]{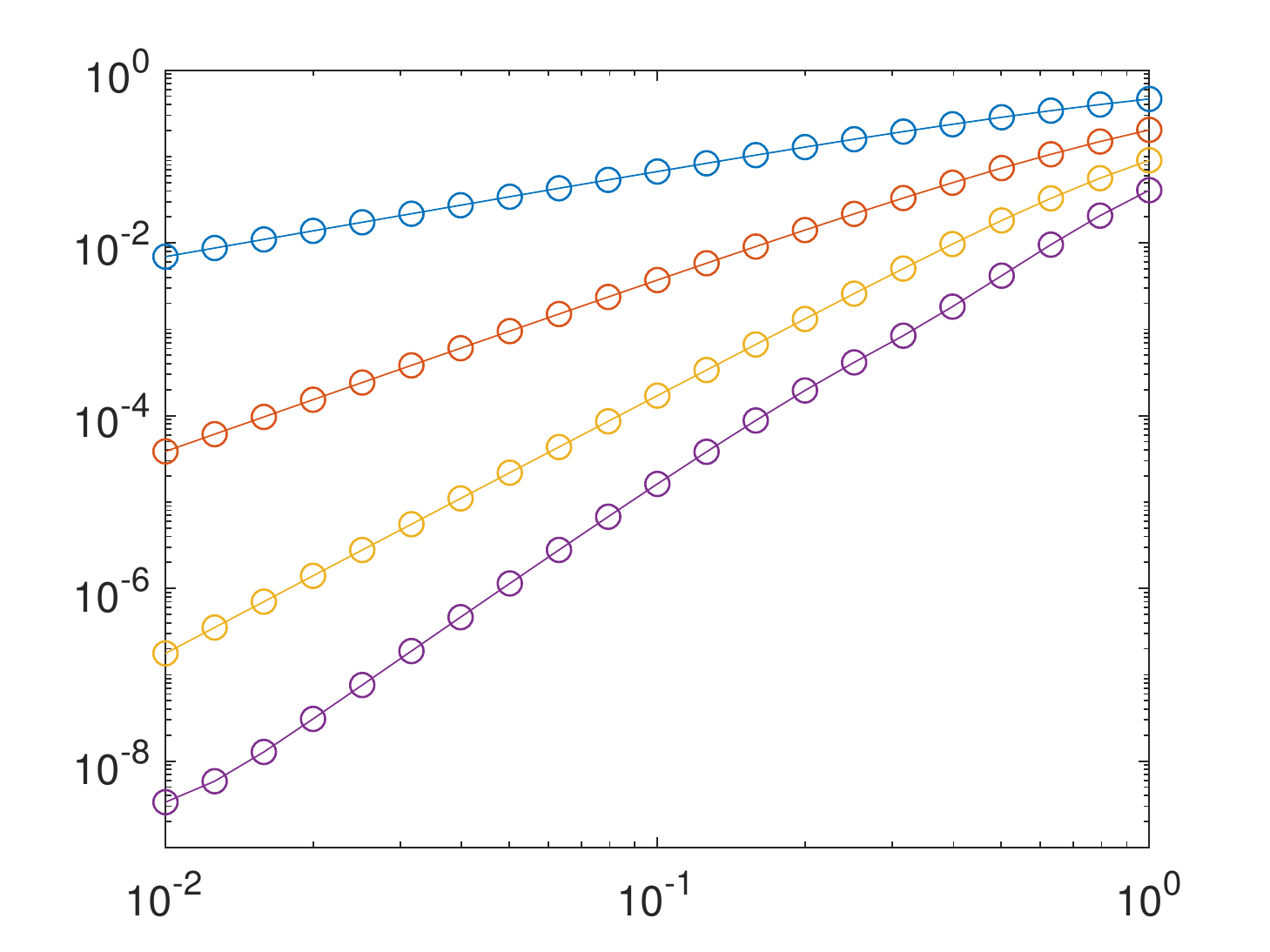}
    \put (19,73) {$\displaystyle \|\rho_f-[K_\epsilon*\mu_f]\|_{L^1}/\|\rho_f\|_{L^1}$}
 	\put (14,58) {\rotatebox{7} {$\displaystyle m=1$}}
	\put (14,43) {\rotatebox{18} {$\displaystyle m=2$}}
	\put (14,28) {\rotatebox{26} {$\displaystyle m=3$}}
	\put (14,16) {\rotatebox{32} {$\displaystyle m=4$}}
 	\put (50,-2) {$\displaystyle \epsilon$}
 	\end{overpic}
  \end{minipage}
  \caption{Left: The smoothed approximation to the density on the absolutely continuous spectrum of $\mathcal{L}$ in~\cref{eqn:RSE_op}, with $\smash{f_{r_0}(r)=C_{r_0}e^{-(r-r_0)^2}}$ and $\ell=1$, for $r_0=2$, $r_0=3$, and $r_0=4$ ($C_{r_0}$ is a normalization constant so that $\smash{\|f_{r_0}\|_{L^2(\mathbb{R}_+)}=1}$). The shaded area under each curve corresponds to $\mathbb{P}(1/2\leq E\leq 2)$ in~\cref{eqn:ion_prob} for the particle with wave function $\smash{f_{r_0}(r)}$.  Right: The $L^1((1/2,2))$ relative error in smoothed measures for the radial Schr\"odinger operator in~\cref{eqn:RSE_op}. The relative error is computed by comparing with a numerical solution that is resolved to machine precision. \label{fig:RSE_measure}}
\end{figure}

\begin{theorem}\label{thm:Lp_rates}
Let $K$ be an $m$th order kernel and $1\leq p<\infty$. Suppose that the measure $\mu_f$ is absolutely continuous on the interval $I=(a-\eta,b+\eta)$ for $\eta>0$ and some $a<b$. Let $\rho_f$ denote the Radon--Nikodym derivative of the absolutely continuous component of $\mu_f$, and suppose that $\rho_I:=\rho_f|_{I}\in \mathcal{W}^{m,p}(I)$. Then,
\begin{equation*}
\begin{aligned}
\|\rho_I-[K_\epsilon*\mu_f]\|_{L^p((a,b))} \leq &\frac{C_K(b-a)^{1/p}}{\left(\epsilon+\frac{\eta}{2}\right)^{m+1}}\epsilon^m\\&\hspace{-5mm}+C(m)C_K\|\rho_I\|_{\mathcal{W}^{k,p}(I)}\left(1+\eta^{-m}\right)\log\!\left(1+\frac{b-a+2\eta}{\epsilon}\right)\epsilon^m,
\end{aligned}
\end{equation*}
where $C(m)$ is a constant depending only on $m$, and $C_K$ is from~\cref{decay_bound}.
\end{theorem}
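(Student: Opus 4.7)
The plan is to adapt the strategy of \cref{thm:kernel_rates} to the $L^p$ setting. First, introduce a smooth cutoff $\chi$ supported in $I=(a-\eta,b+\eta)$ with $\chi\equiv 1$ on $(a-\eta/2,b+\eta/2)$ and $\|\chi^{(j)}\|_\infty\leq C\eta^{-j}$, and set $\rho_1:=\chi\rho_I$, extended by zero to $\mathbb{R}$. By the product rule,
\[
\|\rho_1\|_{\mathcal{W}^{m,p}(\mathbb{R})}\leq C(m)\|\rho_I\|_{\mathcal{W}^{m,p}(I)}(1+\eta^{-m}).
\]
The residual measure $d\mu_f^{(\mathrm{r})}:=d\mu_f-\rho_1\,dy$ is nonnegative with $\int d\mu_f^{(\mathrm{r})}\leq 1$ and support disjoint from $(a-\eta/2,b+\eta/2)$. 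For $x\in(a,b)$, since $\rho_1(x)=\rho_I(x)$, we obtain the decomposition
\[
\rho_I(x)-[K_\epsilon*\mu_f](x)=\bigl(\rho_1(x)-[K_\epsilon*\rho_1](x)\bigr)-\int_\mathbb{R} K_\epsilon(x-y)\,d\mu_f^{(\mathrm{r})}(y).
\]

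The second term is handled as in \cref{eqn:approx_error_term1}: the decay bound \cref{decay_bound} together with $|x-y|\geq\eta/2$ on the support of $\mu_f^{(\mathrm{r})}$ gives the pointwise estimate $C_K\epsilon^m/(\epsilon+\eta/2)^{m+1}$, and taking $L^p$ over $(a,b)$ yields the first summand $C_K(b-a)^{1/p}\epsilon^m/(\epsilon+\eta/2)^{m+1}$. For the first term I would invoke the integral form of Taylor's theorem, which extends from smooth functions to $\mathcal{W}^{m,p}$ by density:
\[
\rho_1(x-y)-\rho_1(x)=\sum_{k=1}^{m-1}\frac{(-y)^k}{k!}\rho_1^{(k)}(x)+\frac{(-y)^m}{(m-1)!}\int_0^1(1-t)^{m-1}\rho_1^{(m)}(x-ty)\,dt.
\]
The vanishing-moment conditions of \cref{def:mth_order_kernel} annihilate the polynomial part when integrated against $K_\epsilon$, so Minkowski's integral inequality gives
\[
\|\rho_1-K_\epsilon*\rho_1\|_{L^p(a,b)}\leq\tfrac{1}{(m-1)!}\int_\mathbb{R}|K_\epsilon(y)|\,|y|^m\int_0^1(1-t)^{m-1}\|\rho_1^{(m)}(\cdot-ty)\|_{L^p(a,b)}\,dt\,dy.
\]

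The main obstacle is that $\int|K(u)||u|^m\,du$ diverges logarithmically under the decay \cref{decay_bound}, which is precisely the origin of the $\log$ factor in the stated bound. To tame this divergence I would exploit the compact support of $\rho_1$: for $x\in(a,b)$, a change of variables shows $\|\rho_1^{(m)}(\cdot-ty)\|_{L^p(a,b)}\leq \|\rho_1^{(m)}\|_{L^p(\mathbb{R})}\,\mathbf{1}_{\{|ty|\leq b-a+\eta\}}$, so the inner $t$-integral is at most $\|\rho_1^{(m)}\|_{L^p(\mathbb{R})}\min(1/m,\,(b-a+\eta)/|y|)$ after invoking $\int_0^\tau(1-t)^{m-1}dt\leq\min(1/m,\tau)$. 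Splitting the outer $y$-integral at $|y|=L:=m(b-a+\eta)$ and substituting $u=y/\epsilon$, the portion $|y|\leq L$ contributes $\mathcal{O}(C_K\epsilon^m\log(1+L/\epsilon))$ because $|K(u)||u|^m\lesssim(1+|u|)^{-1}$, while the tail $|y|>L$ contributes only $\mathcal{O}(C_K\epsilon^m)$ since $|K(u)||u|^{m-1}\lesssim(1+|u|)^{-2}$ is integrable at infinity. Combining these estimates with the bound on $\|\rho_1\|_{\mathcal{W}^{m,p}(\mathbb{R})}$, absorbing universal constants into $C(m)$, and noting that $\log(1+L/\epsilon)\leq\log(1+(b-a+2\eta)/\epsilon)$ up to constants in $m$, delivers the second summand and completes the proof.
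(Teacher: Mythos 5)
Your proposal is correct, but it reaches the key estimate by a genuinely different route than the paper. The outer structure coincides: the cutoff decomposition $\rho_f=\rho_1+\rho_2$, the nonnegative residual measure $\mu_f^{(\mathrm{r})}$ supported outside $(a-\eta/2,b+\eta/2)$, and the bound $C_K(b-a)^{1/p}\epsilon^m/(\epsilon+\eta/2)^{m+1}$ via \cref{decay_bound} are exactly the paper's steps. For $\|\rho_1-K_\epsilon*\rho_1\|_{L^p((a,b))}$, however, the paper proves a separate lemma (\cref{lemma:convolution_bound}) by a Fourier-space argument: it constructs functions $\phi_j$ with $\widehat\phi_j(\omega)=(\widehat K(\omega)-1)(2\pi i\omega)^{-j}$ (via \cref{lemma:FT_kernel}), derives the identity $K_\epsilon*g-g=\epsilon^m\,\phi_{m,\epsilon}*g^{(m)}$, and applies Young's inequality with the $L^1$ norm of $\phi_{m,\epsilon}$ over a window of width $b-a+2\eta$, which is where the logarithm enters. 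You instead argue in real space: Taylor's theorem with integral remainder in $\mathcal{W}^{m,p}$, the vanishing moments of \cref{def:mth_order_kernel}, Minkowski's integral inequality, and---your key observation---the compact support of $\rho_1^{(m)}$, which truncates the otherwise logarithmically divergent $m$th absolute moment of $K$ at scale $(b-a+\eta)/\epsilon$ and produces the same $\epsilon^m\log(1+\cdot/\epsilon)$ rate. Your route is more elementary (no Fourier transform, no Carleson's theorem) and mirrors the pointwise proof of \cref{thm:kernel_rates}; the paper's lemma, in exchange, isolates a reusable convolution identity valid for any compactly supported $g\in\mathcal{W}^{m,p}(\mathbb{R})$ and makes clear (as in the paper's footnote) that extra decay of $K$ would remove the logarithm. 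Two small refinements to make your constants match the statement for every $\epsilon>0$: your tail contribution $\mathcal{O}(C_K\epsilon^m)$ from $|y|>L$ is absorbed into the logarithmic term only when $\epsilon\lesssim b-a+2\eta$ (the log tends to zero as $\epsilon\to\infty$); retaining the sharper factor $(b-a+\eta)/(\epsilon+L)$ and using $\log(1+s)\geq s/(1+s)$ fixes this, and the replacement of $\log(1+L/\epsilon)$ by $\log(1+(b-a+2\eta)/\epsilon)$ follows from $1+ms\leq(1+s)^m$, with the factor $m$ absorbed into $C(m)$.
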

\begin{proof}
See~\cref{subsec:Lp_bounds}.
\end{proof}

\Cref{thm:Lp_rates} implies the asymptotic error rate\footnote{\Cref{thm:Lp_rates} for $p=2$ without the first term on the right-hand side and for absolutely continuous probability measures with $\mathcal{W}^{m,2}(\mathbb{R})$ density function is used in kernel density estimation in statistics~\cite[Prop.~1.5]{tsybakov2008introduction}. In this context, the $L^2$ error is used to bound the bias term in the mean integrated squared error. The case of $L^1$ convergence requires a different proof technique.}
\[
\left\|\rho_I-[K_\epsilon*\mu_f]\right\|_{L^p(I)}=\mathcal{O}(\epsilon^m\log(1/\epsilon)), \qquad \text{as}\quad \epsilon\downarrow 0.
\]
The $L^1$ convergence for the approximation to the probabilities in~\cref{eqn:ion_prob} is shown in~\cref{fig:RSE_measure} (right), which agrees with the asymptotic rates implied by~\cref{thm:Lp_rates}.

If one wishes to compute dynamics of the electron interacting with the atomic core via the Hellman potential, then we need a slightly weaker form of convergence. For instance, the time autocorrelation of the electron's wave function can be computed by integrating the function $F_t(E)=e^{-iEt}$ against the measure $\mu_f$, so that
$$
\mu_f(F_t) = \langle e^{-i\mathcal{L}t}f,f\rangle=\int_{-\infty}^\infty e^{-iyt}\,d\mu_f(y) \approx \int_{-\infty}^\infty  e^{-iyt}\,[K_\epsilon*\mu_f](y)\,dy, \quad \epsilon\ll 1.
$$
Unlike the previous cases of pointwise and $L^1$ convergence, we do not need any additional requirements on the measure $\mu_f$, which may be singular and have discrete components, to obtain convergence rates. Instead, we require that the function $F$ be sufficiently smooth. For example, if $F\in \mathcal{C}^{n,\alpha}(\mathbb{R})$ and $K$ is an $m$th order kernel, then approximating $F$ via convolutions and applying Fubini's theorem shows that
$$
|\mu_f(F)-[K_\epsilon*\mu_f](F)|=\mathcal{O}(\epsilon^{n+\alpha})+\mathcal{O}(\epsilon^m\log(1/\epsilon)), \qquad \text{as} \quad \epsilon\downarrow 0.
$$

Finally, note that a kernel cannot be non-negative everywhere and have an order greater than two. This is not a problem in practice since we can replace $[K_\epsilon*\mu_f](x)$ by $\max\{0,[K_\epsilon*\mu_f](x)\}$ with the same error bounds in~\cref{thm:kernel_rates,thm:Lp_rates}.

\section{The resolvent framework in practice}\label{sec:practical_considerations}
Given an $m$th order rational kernel, defined by distinct poles $a_1,\dots,a_m$ in the upper half-plane, the resolvent-based framework for evaluating an approximate spectral measure is summarized in~\cref{alg:spec_meas}. This algorithm, which can be performed in parallel for several $x_0$, forms the foundation of \texttt{SpecSolve}. \texttt{SpecSolve} uses equispaced poles (see~\cref{sec:equi_pts}) by default, but users may select other options with the name-value pair \texttt{`PoleType'}.

In practice, the resolvent in~\cref{alg:spec_meas} is discretized before being applied. We compute an accurate value of $\mu_f^\epsilon$ provided that the resolvent is applied with sufficient accuracy (see~\cref{fig:fred_int_meas}), which can be done {\em{}adaptively} with {\em{}a posteriori} error bounds~\cite{colbrook2019computing}. For an efficient adaptive implementation, \texttt{SpecSolve} constructs a fixed discretization, solves linear systems at each required complex shift, and checks the approximation error at each shift. If further accuracy is needed at a subset of the shifts, then the discretization is refined geometrically, applied at these shifts, and the error is recomputed. This process is repeated until the resolvent is computed accurately at all shifts. The user may (optionally) specify initial and maximum discretization sizes with the name-value pairs \texttt{`DiscMin'} and \texttt{`DiscMax'}.

\texttt{SpecSolve} supports three types of operators: (1) ordinary differential operators, (2) integral operators, and (3) infinite matrices with finitely many non-zeros per column. For more general operators and inner products, the user must supply a command that solves the shifted linear equations in~\cref{alg:spec_meas} and a command that evaluates the inner products, allowing a user to evaluate spectral measures for exotic problems and employ their favorite discretization.

\begin{algorithm}[t]
\textbf{Input:} $\mathcal{L}:\mathcal{D}(\mathcal{L})\rightarrow\mathcal{H}$, $f\in\mathcal{H}$, $x_0\in\mathbb{R}$, $a_1,\dots,a_m\in\{z\in\mathbb{C}:{\rm Im}(z)>0\}$, and $\epsilon>0$. \\
\vspace{-4mm}
\begin{algorithmic}[1]
\STATE Solve the Vandermonde system~\cref{eqn:vandermonde_condition} for the residues $\alpha_1,\dots,\alpha_m\in\mathbb{C}$.
	\STATE Solve $(\mathcal{L}-(x_0-\epsilon a_j))u_{j}^\epsilon=f$ for $1\leq j\leq m$.
	\STATE Compute $\mu_f^\epsilon(x_0)=\frac{-1}{\pi}\mathrm{Im}\left(\sum_{j=1}^m\alpha_j\langle u_{j}^\epsilon,f\rangle\right)$.
\end{algorithmic} \textbf{Output:} $\mu_f^\epsilon(x_0)$.
\caption{A practical framework for evaluating an approximate spectral measure of an operator $\mathcal{L}$ at $x_0\in\mathbb{R}$ with respect to a vector $f\in\mathcal{H}$.}\label{alg:spec_meas}
\end{algorithm}

\subsection{Ordinary differential operators}\label{specfun_ODE_meth}
As part of its capabilities, \texttt{SpecSolve} computes samples from a smoothed approximation to the spectral measure of a self-adjoint, regular ordinary differential operator on the real-line or on the half-line, i.e.,
\begin{equation}\label{eqn:diff_op}
[\mathcal{L}u](x)=c_{p}(x)\frac{d^{p}u}{dx^{p}}(x)+\cdots+c_1(x)\frac{du}{dx}(x)+c_0(x)u(x), \qquad p\geq 0,
\end{equation}
with the standard inner products. Here, the variable coefficients $c_0,\ldots,c_{p}$ are smooth functions and $c_{p}\neq 0$ on the relevant domain (real-line or half-line). Note that $\mathcal{L}$ in~\cref{eqn:diff_op} is not necessarily self-adjoint: the user provides the variable coefficients $c_0,\ldots,c_{p}$ and must verify that $\mathcal{L}$ is self-adjoint.

To demonstrate, recall the Schr\"odinger operator defined on the real line in~\cref{eqn:schrodinger}. We can compute a smoothed approximation to its spectral measure using the function \texttt{diffMeas} as follows.
\begin{verbatim}
  xi = linspace(0,6,121);                          % Evaluation pts
  f = @(x) x.^2./(1+x.^6) * sqrt(9/pi);            % Measure wrt f(x)
  c = {@(x) x.^2./(1+x.^6), @(x) 0, @(x) -1};      % Schrodinger op
  mu = diffMeas(c, f, xi, 0.1, `order', 1);        % epsilon=0.1, m=1
\end{verbatim}
The differential operator is specified by its coefficients $c_0,\ldots,c_2$, which are input as a cell array of function handles. Given evaluation points \texttt{xi} and function handle \texttt{f}, \texttt{diffMeas} computes the smoothed measure, with respect to \texttt{f}, using the specified smoothing parameter and kernel order (the default kernel is $m=2$).

To apply the resolvent of a differential operator acting on functions on the real line, the associated differential equation (see~\cref{alg:spec_meas}) is automatically transplanted to the periodic interval $[-\pi,\pi]$ with an analytic map and solved with an adaptive Fourier spectral method~\cite{boyd2001chebyshev}. Typically, the differential equation has singular points at $\pm\pi$ after mapping, and the Fourier spectral method usually converges to a bounded analytic solution~\cite[Ch.~17.8]{boyd2001chebyshev}. Similarly, on the half-line, the differential equation is mapped to the unit interval $[-1,1]$ with an analytic map and solved with an adaptive nonperiodic analogue of the Fourier spectral method known as the ultraspherical spectral method~\cite{olver2013fast}. After solving the differential equation on the mapped domain, the inner products in~\cref{eqn:smoothed_meas} are computed using a trapezoidal rule (for the unit circle)~\cite{trefethen2014exponentially} or a Clenshaw--Curtis rule (for the unit interval)~\cite[Ch.~19]{trefethen2019approximation}.

In many applications, differential operators on the half-line may have a singular point at the origin. This makes an efficient and automatic representation of variable coefficients somewhat subtle. For example, the radial Schr\"odinger operator in~\cref{eqn:RSE_op} has a singular point at the origin for $\ell\geq 1$, and the shifted linear equations in~\cref{alg:spec_meas} should be multiplied through by $r^2$ so that subsequent discretizations yield sparse, banded matrices~\cite{olver2013fast}. In addition to \texttt{diffMeas}, \texttt{SpecSolve} contains a small gallery of functions that sample smoothed spectral measures for common operators with singular points, such as \texttt{rseMeas}, which samples the smoothed measure of the radial Schr\"odinger operator with a user-specified potential.

To illustrate, we use \texttt{rseMeas} to compute $\mathbb{P}(1/2\leq E\leq 2)$ from~\cref{eqn:ion_prob}.
\begin{verbatim}
  normf = sqrt(pi/8)*(2-igamma(1/2,8)/gamma(1/2)); % Normalization
  f = @(r) exp(-(r-2).^2)/sqrt(normf);             % Measure wrt f(r)
  V={@(r) 0, @(r) exp(-r)-1, 1};                   % Potential, l=1
  [xi, wi] = chebpts(20,  [1/2 2]);                % Quadrature rule
  mu = rseMeas(V, f, xi, 0.1, `Order', 4)          % epsilon=0.1, m=4
  ion_prob = wi * mu;                              % Ionization prob
\end{verbatim}
The user specifies the potential of the radial Schr\"odinger operator through a cell array of function handles: \texttt{V\{1\}} is the nonsingular part of the potential, \texttt{V\{2\}} is the variable coefficient for the $r^{-1}$ Coulomb term, and \texttt{V\{3\}} is the quantum angular momentum number that defines the coefficient for the $r^{-2}$ centrifugal term.

\subsection{Integral operators}\label{sec:fred_disc}
In \texttt{SpecSolve}, the function \texttt{intMeas} computes samples from a smoothed approximation of the spectral measure of an integral operator, acting on functions defined on $[-1,1]$, of the form
\[
[\mathcal{L}u](x)=a(x)u(x)+\int_{-1}^1 g(x,y)\,u(y)\,dy, \qquad x\in [-1,1], \qquad u\in L^2([-1,1]).
\]
We assume that the multiplicative coefficient $a(x)$ and the kernel $g(x,y)$ are smooth functions (well-approximated by polynomials), and that $g(x,y)=\overline{g(y,x)}$ so that $\mathcal{L}$ is self-adjoint with respect to the standard inner product. Revisiting the integral operator from~\cref{eqn:fred_int}, we can compute the smoothed measure with a few simple commands.
\begin{verbatim}
  xi = linspace(-2.5,2.5,501);                    % Evaluation pts
  f = @(x) sqrt( 3/2 ) * x;                       % Measure wrt f(x)
  a = { @(x) x, @(x,y) exp(-(x.^2+y.^2)) };       % Integral operator
  mu = intMeas(a, f, xi, 0.1, `Order', 1);        % epsilon=0.1, m=1
\end{verbatim}
The integral operator is specified by a cell array containing function handles for the kernel and multiplicative coefficient. Given smoothing parameter and kernel order, the smoothed measure is approximated at the evaluation points \texttt{xi}.

To apply the resolvent, we use an adaptive Chebyshev collocation scheme to solve the shifted linear systems in~\cref{alg:spec_meas}. For efficient storage and computation, we exploit low numerical rank structure in the discretization of the smooth kernels when possible~\cite{townsend2013extension}. We apply a Clenshaw--Curtis quadrature rule to compute the inner products required to sample $\mu_f^\epsilon$~\cite{trefethen2019approximation}.

\subsection{Infinite sparse matrices}\label{discrete_specfun_sec}
In \texttt{SpecSolve}, the function \texttt{infmatMeas} deals with discrete systems. We consider the canonical Hilbert space $\ell^2(\mathbb{N})$ (with the standard inner product) and assume that $\mathcal{L}$ is realized as an infinite matrix $A$ such that 
$$
A=
\begin{pmatrix} 
a_{11} & a_{12}  &\hdots \\
a_{21} & a_{22} &\hdots\\
\vdots & \vdots & \ddots 
\end{pmatrix}, \qquad a_{ij}=\langle \mathcal{L}e_j,e_i\rangle=\overline{a_{ji}}, 
$$
where $e_i$ is the $i$th canonical unit vector. We assume that the span of the canonical basis forms a core\footnote{This technical condition means that the closure of $\mathcal{L}$ restricted to the span of the canonical basis is $\mathcal{L}$, and hence we can equate $\mathcal{L}$ with the infinite matrix $A$.} of $\mathcal{L}$ and that there is known function $F:\mathbb{N}\rightarrow\mathbb{N}$ such that $a_{ij}=0$ if $i>F(j)$.\footnote{Weaker assumptions such as known asymptotic decay of each column are also possible.} There is no loss of generality in working in $\ell^2(\mathbb{N})$ since we can always choose an orthonormal basis of a separable Hilbert space to obtain $\mathcal{H}\cong \ell^2(\mathbb{N})$. The majority of graph operators that are encountered in physics can be put in this framework. For example, given a finite range interaction Hamiltonian on $\ell^2(\mathbb{Z}^d)$, one can enumerate the vertices of the graph to obtain a realization of $\ell^2(\mathbb{Z}^d)\cong \ell^2(\mathbb{N})$ as well as an associated function $F$. The value of $[K_\epsilon*\mu_f](x_0)$ for some $f\in \ell^2(\mathbb{N})$ is then approximated through least-squares solutions of the rectangular systems~\cite{colbrook2019computing}
$$
P_{F(N)}(A-(x_0+\epsilon a_j))P_Nu_j^\epsilon=P_{F(N)}f,
$$
where $P_n$ denotes the orthogonal projection onto the span of the first $n$ basis vectors. For a rectangular truncation $H=P_{F(N)}AP_{N}$ supplied by the user, we can, for example, compute the smoothed measure with respect to the first canonical basis vector via the following commands.
\begin{verbatim}
  xi = linspace(-3.1,3.1,125);                  % Evaluation pts
  b = zeros(size(H,1),1); b(1) = 1;             % Measure wrt vector b
  mu = infmatMeas(H,b,xi,0.05,`Order',2);       % epsilon=0.05, m=2
\end{verbatim}
An example for a magnetic Schr\"odinger equation on a graphene lattice (see~\cref{sec:graphene}) is provided in the gallery of examples in \texttt{SpecSolve}.

\section{Examples}\label{sec:examples}
We now provide three examples to demonstrate the versatility of our computational framework. 

\subsection{Example 1: Beam and two-dimensional Schr\"odinger equations}\label{sec:diff_ops}
The increased computational efficiency achieved through high-order kernels allows us to treat PDEs and high-order ODEs. First, consider a fourth-order differential operator associated with the elastic beam equation, given by
\begin{equation}\label{eqn:elastic_beam}
[\mathcal{L}u](x)=\frac{d^4u}{dx^4}(x)-\frac{d}{dx}\left[\left(1-e^{-x^2}\right)\frac{du}{dx}\right](x)+\frac{a\sin(x)}{1+x^2}u(x), \qquad x\in\mathbb{R},
\end{equation}
for some constant $a\in\mathbb{R}$. \cref{fig:hard_examples} (left) shows $K_\epsilon*\mu_f$, for a second-order kernel with $\epsilon=0.05$ and $f(x)=\sqrt{2\pi^{-1}}/(1+x^2)$, when $a=0$, $5$, and $10$. When $a=0$, the operator is positive with continuous spectrum in $[0,\infty)$. When $a\neq 0$, there is also an eigenvalue below the continuous spectrum, corresponding to the spikes in~\cref{fig:hard_examples} (left). We also observe that different values of $a$ alter the profile of $\rho_f$ on $[0,\infty)$.

Next, consider the two-dimensional Schr\"odinger operator given by
\begin{equation}\label{eqn:2D_PDE}
[\mathcal{L}u](x_1,x_2)=-\nabla^2 u(x_1,x_2)+\left(\frac{e^{-x_1^2}}{1+x_2^2}+a\left(\mathrm{erf}(x_1)+\mathrm{erf}(x_2)\right)\right)u(x_1,x_2),\quad  x_j\in\mathbb{R},
\end{equation}
for some constant $a\in\mathbb{R}$, where $\mathrm{erf}(\cdot)$ is the error function. To apply the resolvent we map $\mathbb{R}^2$ to the torus $[-\pi,\pi]^2$ via $x_j\rightarrow10i(1-e^{i\theta_j})/(1+e^{i\theta_j})$. We then used a tensorized Fourier spectral method with hyperbolic cross ordering of the basis functions~\cite[Ch.~III]{lubich_qm_book}. \cref{fig:hard_examples} (right) shows $K_\epsilon*\mu_f$, for a fourth-order kernel with $\epsilon=0.2$  and $f(x_1,x_2)=\exp(-x_1^2-x_2^2)\sqrt{2\pi^{-1}}$, when $a=0,1$, and $2$. The spectrum of the operator is $[-2a,\infty)$ and we observe that the convolution $[K_\epsilon*\mu_f](x)$ takes small negative values in the vicinity of the lower boundary of the spectrum.

\begin{figure}[!tbp]
  \centering
  \begin{minipage}[b]{0.48\textwidth}
    \begin{overpic}[width=\textwidth]{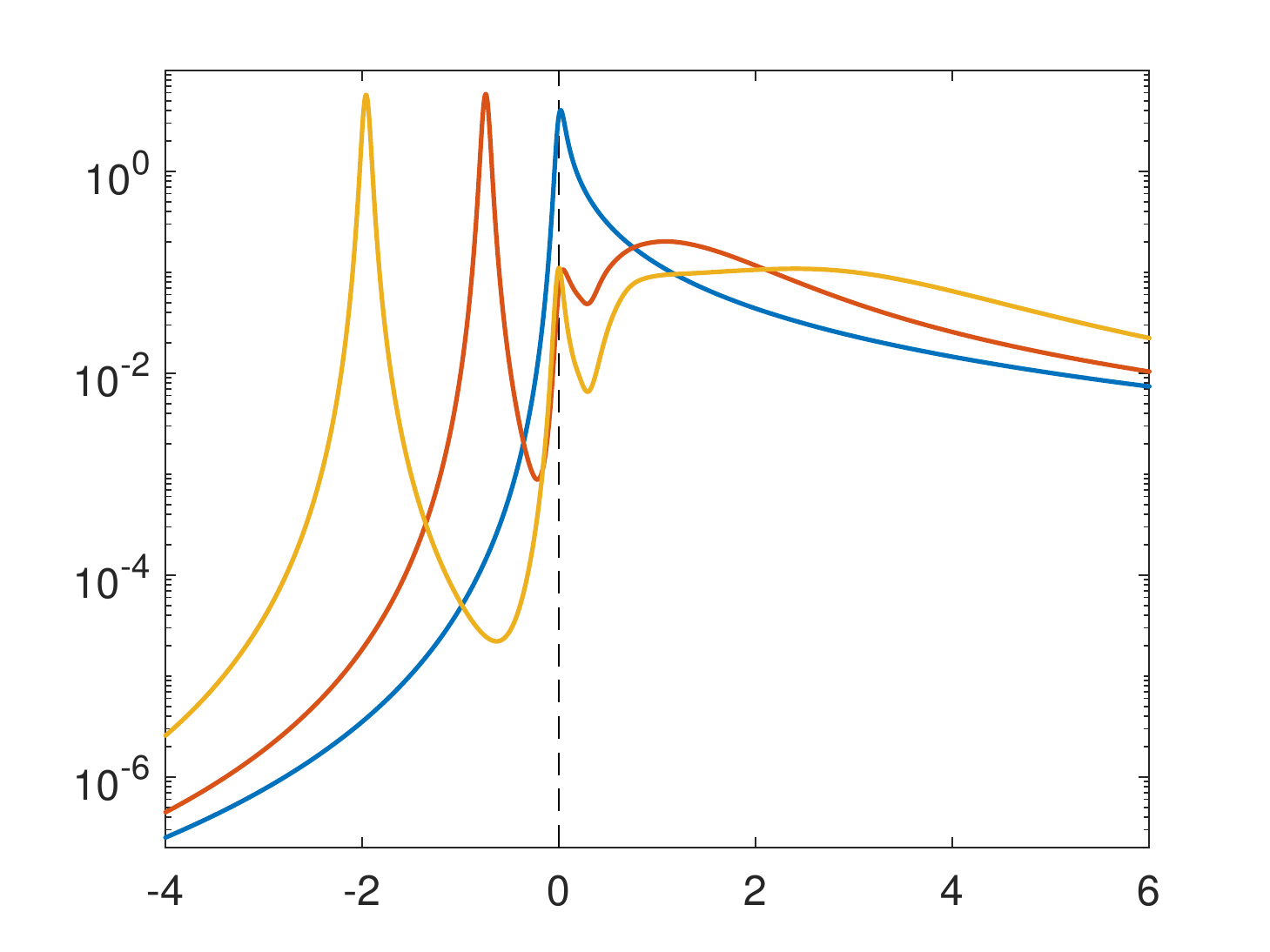}
 	\put (38,73) {$\displaystyle [K_\epsilon*\mu_f](x)$}
 	\put (50,-2) {$\displaystyle x$}
	\put (12.5,21) {\rotatebox{57} {$\displaystyle a=10$}}
	\put (18,17) {\rotatebox{47} {$\displaystyle a=5$}}
	\put (23,11) {\rotatebox{31} {$\displaystyle a=0$}}
 	\end{overpic}
  \end{minipage}
  \hfill
  \begin{minipage}[b]{0.48\textwidth}
    \begin{overpic}[width=\textwidth]{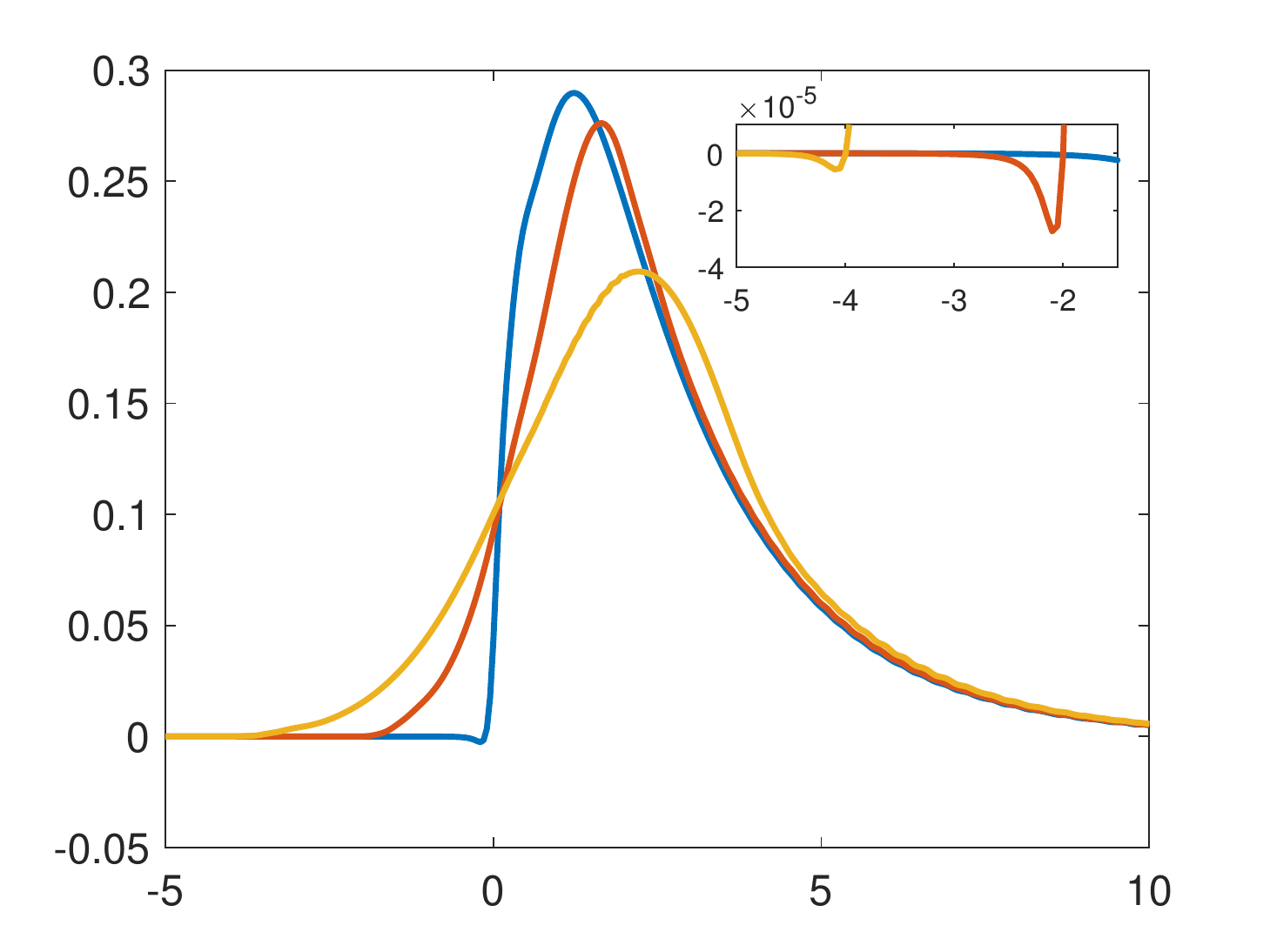}		
	\put(45,10)  {\vector(-1,1){7}}
	\put (45,10) { {$\displaystyle a=0$}}	
	\put(45,22)  {\vector(-1,0){9}}
	\put (45,22) { {$\displaystyle a=1$}}	
	\put(20,30)  {\vector(1,-1){9}}
	\put (15,30) { {$\displaystyle a=2$}}		
 	\put (38,73) {$\displaystyle [K_\epsilon*\mu_f](x)$}
 	\put (50,-2) {$\displaystyle x$}
 	\end{overpic}
  \end{minipage}
  \caption{Left: Smoothed approximations to the spectral measures of the elastic beam operators in~\cref{eqn:elastic_beam} for a second-order kernel with $a = 0, 5,$ and $10$. Right: Smoothed approximations to the spectral measures of the two-dimensional Schr\"odinger operators in~\cref{eqn:2D_PDE} for a fourth-order kernel with $a=0,1$, and $2$. The magnified region demonstrates that $[K_\epsilon*\mu_f](x)$ is not always positive for kernels of order greater than two.}
	\label{fig:hard_examples}
\end{figure}

\subsection{Example 2: The Schr\"odinger equation on a graphene lattice}\label{sec:graphene}
We now apply our method to a magnetic tight-binding model of graphene, which involves a discrete graph operator~\cite{agazzi2014colored}. Graphene is a two-dimensional material with carbon atoms situated at the vertices of a honeycomb lattice (see~\cref{honeycomb}), whose unusual properties are studied in condensed-matter physics~\cite{neto2009electronic,16FW}. The magnetic properties of graphene are important because of the experimental observation of the quantum Hall effect and Hofstadter's butterfly~\cite{ponomarenko2013cloning}, and the exciting new area of twistronics \cite{chang_2019,lu2019superconductors}.

\subsubsection{The model}
A honeycomb lattice can be decomposed into two bipartite sub-lattices (see~\cref{honeycomb} (left)) and thus, the wave function of an electron can be modeled as the spinor~\cite{agazzi2014colored}
\[
\psi_{m,n}=(\psi_{m,n}^{[1]},\psi_{m,n}^{[2]})^T\in\mathbb{C}^2,\qquad \psi=(\psi_{m,n})\in l^2(\mathbb{Z}^2;\mathbb{C}^2)\cong \ell^2(\mathbb{N}).
\]
Here, $(m,n)\in\mathbb{Z}^2$ labels a position on the sub-lattices and $\ell^2(\mathbb{Z}^2;\mathbb{C}^2)$ denotes the space of square summable $\mathbb{C}^2$-valued sequences indexed by $\mathbb{Z}^2$. To define the Hamiltonian, consider the following three magnetic hopping operators $T_1,T_2,T_3:\ell^2(\mathbb{Z}^2;\mathbb{C}^2)\rightarrow \ell^2(\mathbb{Z}^2;\mathbb{C}^2)$ for a given magnetic flux per unit cell $\Phi$ (in dimensionless units):
\[
(T_1\psi)_{m,n}\!=\!\begin{pmatrix}
\psi_{m,n}^{[2]}\\
\psi_{m,n}^{[1]}
\end{pmatrix}\!,\quad (T_2\psi)_{m,n}\!=\!\begin{pmatrix}
\psi_{m+1,n}^{[2]}\\
\psi_{m-1,n}^{[1]}
\end{pmatrix}, \quad (T_3\psi)_{m,n}\!=\!\begin{pmatrix}
e^{-2\pi i\Phi m}\psi_{m,n+1}^{[2]}\\
e^{2\pi i\Phi m}\psi_{m,n-1}^{[1]}
\end{pmatrix}.
\]
After a suitable gauge transformation, the free Hamiltonian can be expressed as $H_0=T_1+T_2+T_3$ and $\Lambda(H_0)\subset[-3,3]$. A suitable ordering of lattice points leads to a sparse discretization of $H_0$, where the $k$th column contains $\mathcal{O}(\sqrt{k})$ non-zero entries (see~\cref{honeycomb} (right)). Therefore, for an approximation using $N$ basis sites, the action of the resolvent can be computed in $\mathcal{O}(N^{3/2})$ operations~\cite{trefethen1997numerical}.

\begin{figure}
\centering
\includegraphics[width=0.45\textwidth,clip,trim= 0mm -30mm 0mm 0mm]{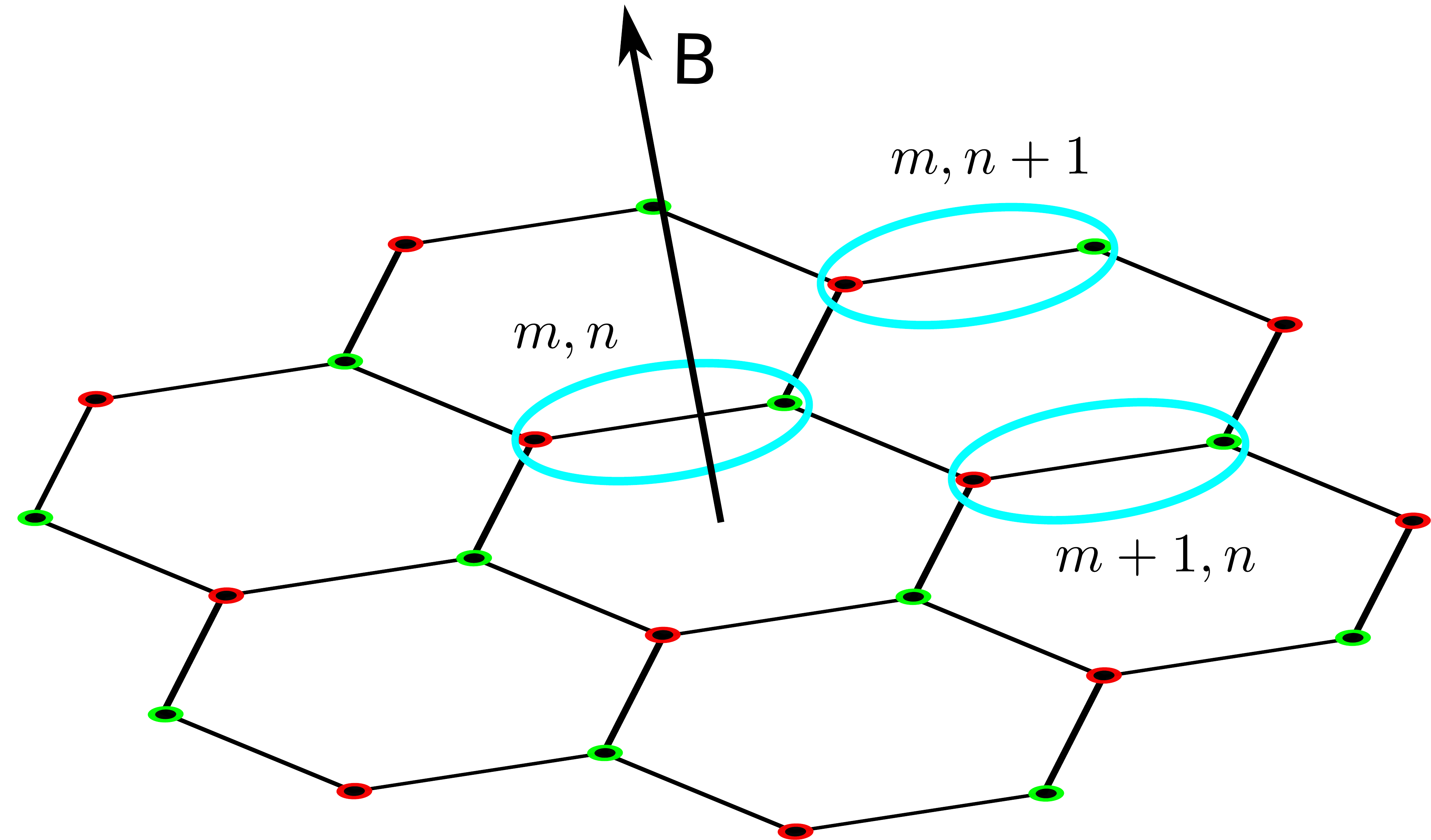}
\includegraphics[width=0.45\textwidth,clip,trim={32mm 97.5mm 32mm 92mm}]{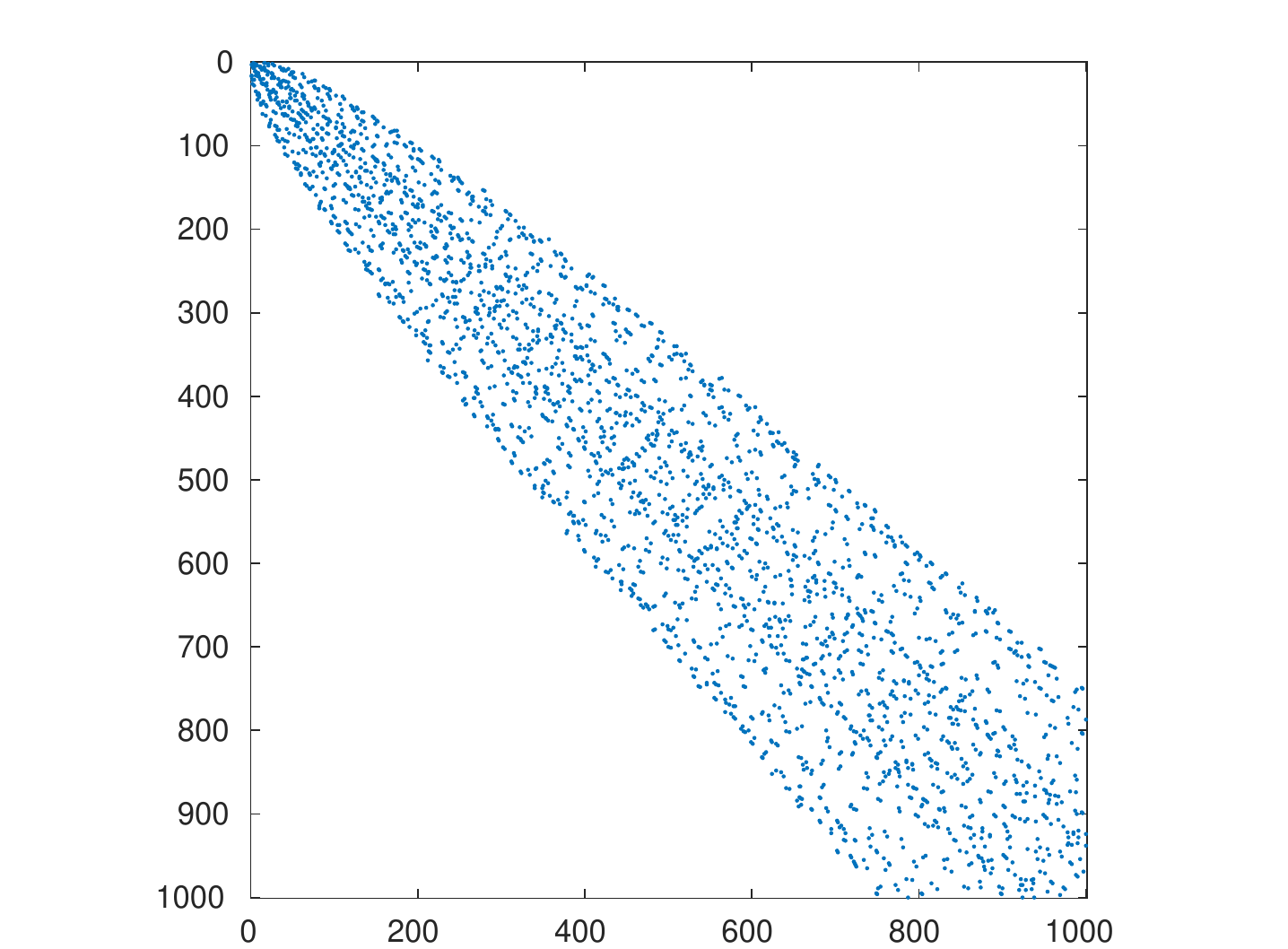}
\caption{Left: Honeycomb structure of graphene as a bipartite graph. We have shown the spinor structure via the circled lattice vertices and the indexing via $(m,n)$. The arrow shows the perpendicular magnetic field $\textbf{{\rm B}}$. Right: Sparsity structure of the first $10^3\times 10^3$ block of the infinite matrix, and the corresponding growing local bandwidth.}
\label{honeycomb}
\end{figure}

\subsubsection{The computed measures}

\cref{butterfly} shows how the spectral measure of $H_0$, taken with respect to the vector $e_1$, varies with $\Phi$. For $\Phi\in\mathbb{Q}$, the spectrum is absolutely continuous, and we show the Radon--Nikodym derivative of the measure, $\rho_{e_1}$. The calculations, performed with a fourth-order kernel and $\epsilon=0.01$, show a sharp Hofstadter-type butterfly.\footnote{Hofstadter's butterfly \cite{hofstadter1976energy} is the visual representation of the fractal, self-similar nature of the spectrum of a Hamiltonian describing non-interacting two-dimensional electrons in a magnetic field in a lattice. The most famous example is for the almost Mathieu operator on $\ell^2(\mathbb{Z})$.}

\begin{figure}
\centering
\begin{overpic}[width=.9\textwidth,clip,trim={41mm 55mm 41mm 55mm}]{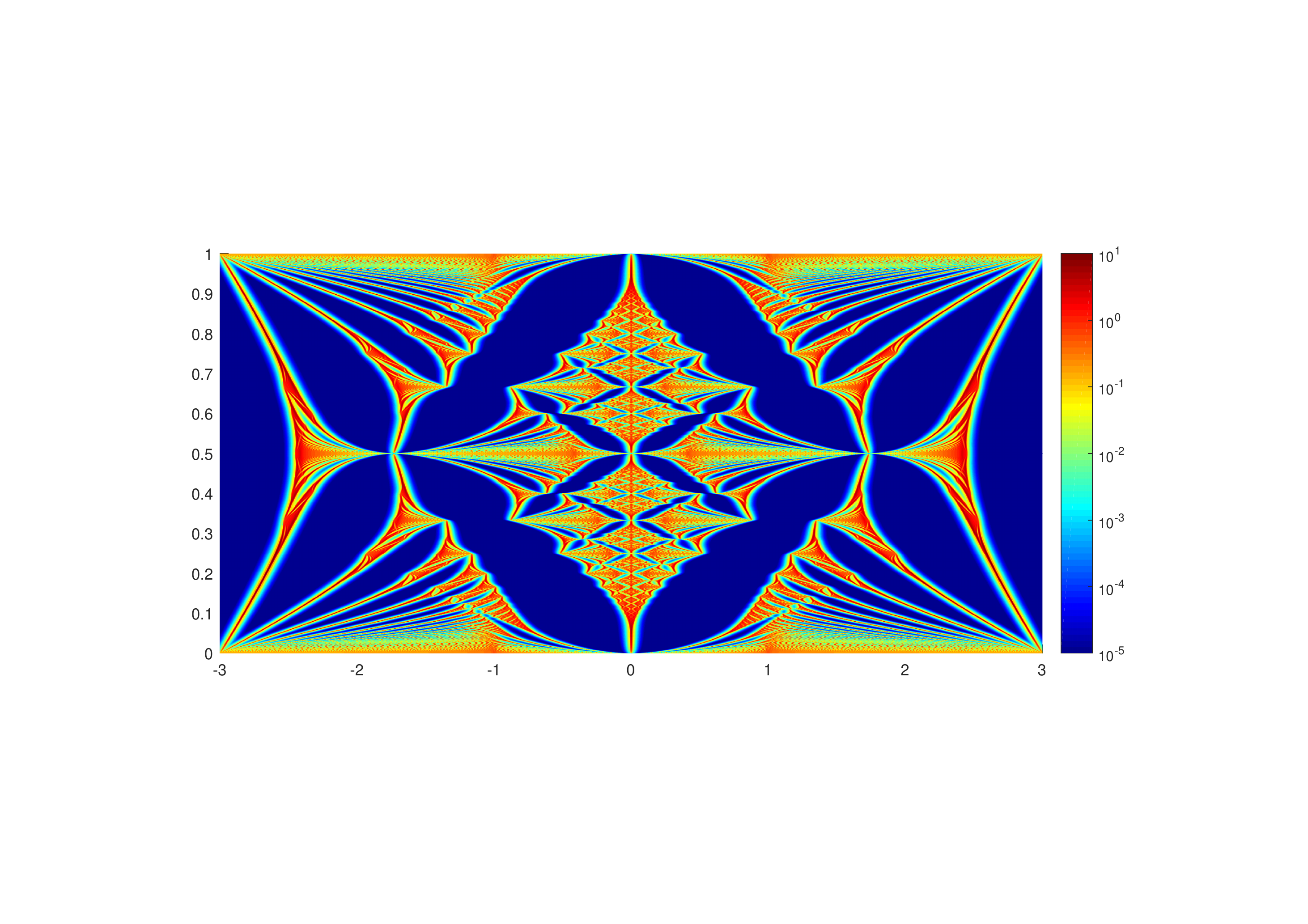}
\put (-2,23.4) {$\Phi$}
\put (46.8,-2) {$x$}
\end{overpic} 
\caption{The Radon--Nikodym derivative of the measure for various $\Phi$, computed with $\epsilon=0.01$. The spectrum is fractal for irrational $\Phi$, which is approximated by rational $\Phi$. The small gaps in the spectrum are clearly visible (corresponding to the blue shaded regions) and the logarithmic scale shows the sharpness of the approximation to $\rho_{e_1}$, which vanishes in these gaps.}
\label{butterfly}
\end{figure}

\cref{graphene_spec_poll} (left) shows an approximation of $\rho_{e_1}$ when $\Phi=1/4$ using a fourth-order kernel and $\epsilon=0.01$. We also show, as shaded vertical strips, the output of the algorithm in~\cite{PhysRevLettcolb} which computes the spectrum with error control (we used an error bound of $10^{-3}$) and without spectral pollution.\footnote{With a non-periodic potential~\cref{non_per_pot}, this is a highly non-trivial problem since finite truncation methods typically suffer from spectral pollution inside the convex hull of the essential spectrum.} The support of $K_\epsilon*\mu_f$ is the whole real line due to the non-compact support of the kernel $K$. However, if $x\not\in\Lambda({H_0})$, then applying~\cref{decay_bound} directly to the definition of convolution shows that $|[K_\epsilon*\mu_f](x)|\leq C_K\epsilon^m/(\epsilon+\mathrm{dist}(x,\Lambda({H_0})))^{m+1}$, where $C_K$ is the constant in~\cref{decay_bound}, so $|[K_\epsilon*\mu_f](x)|$ decays rapidly off of the spectrum. We also consider a multiplication operator (potential) perturbation, modeling a defect, of the form
\begin{equation}
\label{non_per_pot}
V(\textbf{x})=\frac{\cos(\|\textbf{x}\|_2\pi)}{(\|\textbf{x}\|_2+1)^2},
\end{equation}
where $\textbf{x}$ denotes the position of a vertex normalized so each edge has length $1$. The perturbed operator is then $H_0+V$. Since the perturbation is trace class, the absolutely continuous part of the spectrum remains the same (though the measure changes) and the potential induces additional eigenvalues (see~\cref{graphene_spec_poll} (right)). Again, we see that $|[K_\epsilon*\mu_f](x)|$ decays rapidly off of the spectrum. In particular, the measure is not corrupted by spikes in the gaps in the essential spectrum or similar artifacts caused by spectral pollution.

\begin{figure}
\centering
\begin{minipage}[b]{0.48\textwidth}
    \begin{overpic}[width=\textwidth]{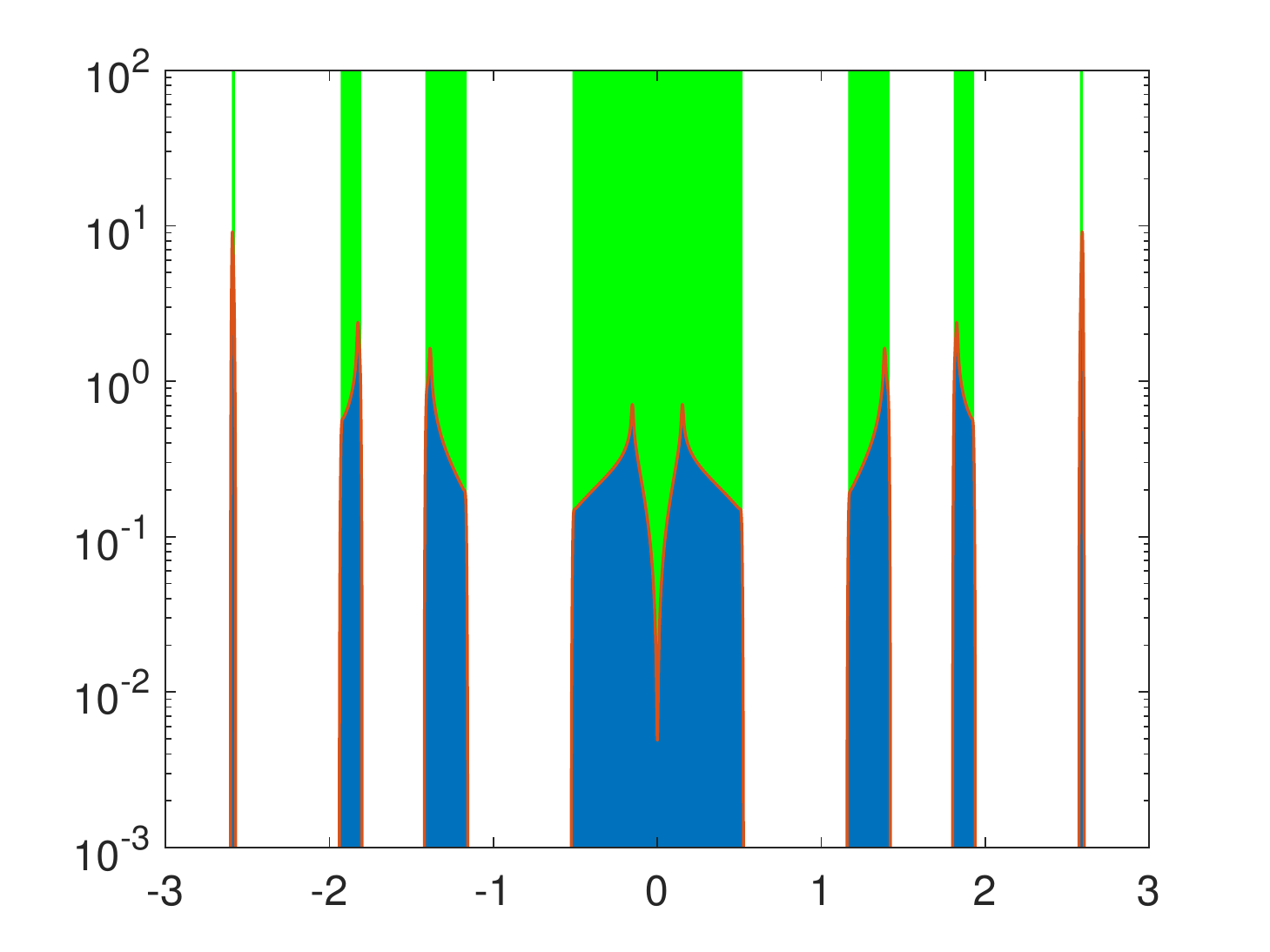}
 	\put (50,-2) {$\displaystyle x$}
 	\end{overpic}
  \end{minipage}
  \hfill
	\begin{minipage}[b]{0.48\textwidth}
    \begin{overpic}[width=\textwidth]{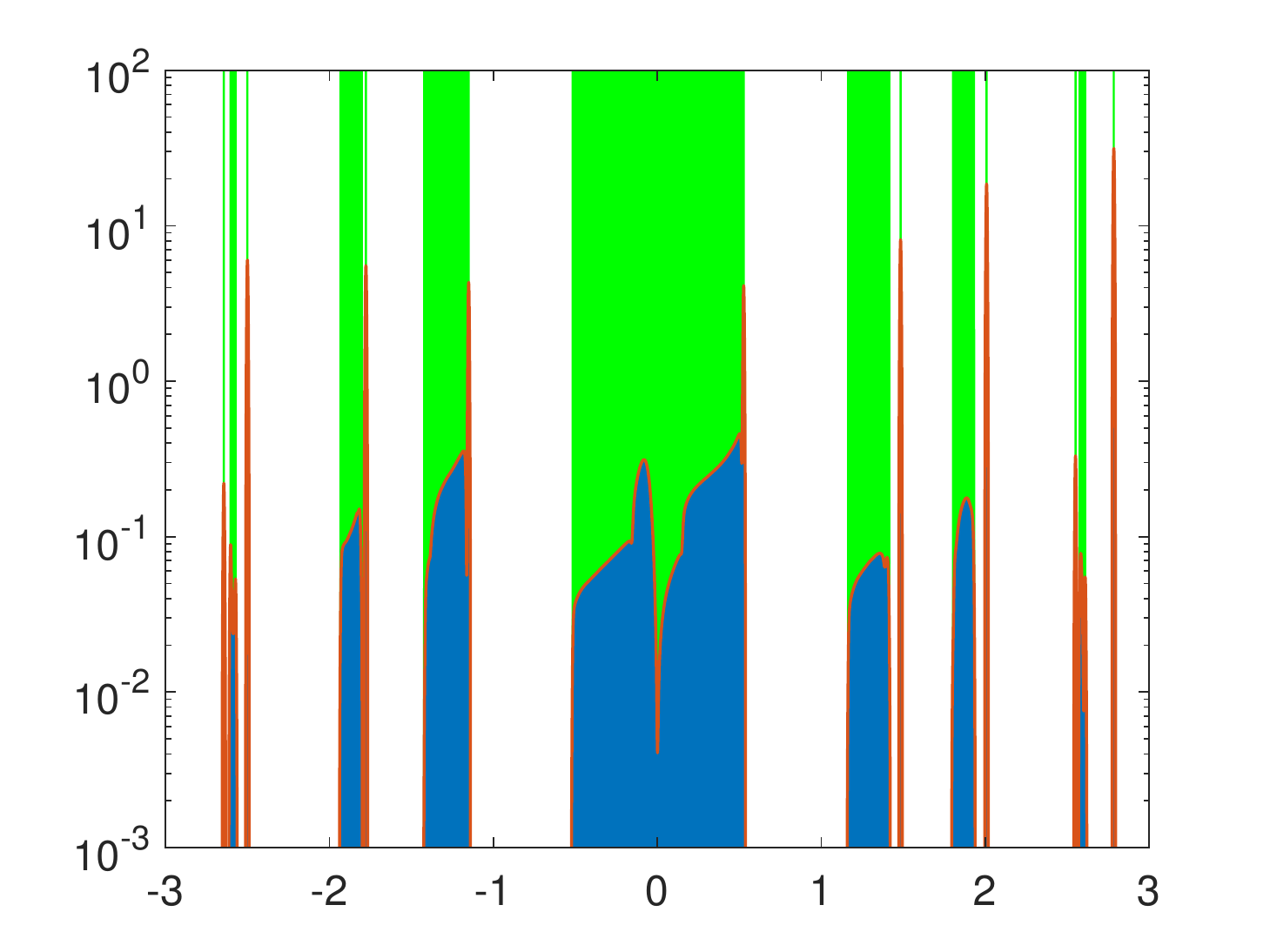}
 	\put (50,-2) {$\displaystyle x$}
 	\end{overpic}
  \end{minipage}
\caption{Left: Smoothed measure with no potential. We show the algorithm in~\cite{PhysRevLettcolb} as shaded strips (green) for comparison. Right: The same computation but with the added potential in~\cref{non_per_pot}. The additional eigenvalues correspond to spikes in the smoothed measure.}
\label{graphene_spec_poll}
\end{figure}

\subsection{Example 3: Discrete spectra and Dirac operators}\label{sec:dirac}
In this example, we consider the case of the Dirac operator $\mathcal{L}=\mathcal{D}_V$ defined below. Often this operator has discrete eigenvalues in the interval $(-1,1)$, which forms a gap in the essential spectrum. This means that standard Galerkin methods used to compute $\Lambda(\mathcal{D}_V)$ typically suffer from spectral pollution in the gap. That is, as the discretization size increases, the discrete spectrum of the Galerkin discretization clusters in a way that does not approximate $\Lambda(\mathcal{D}_V)$. There is a vast literature on methods that seek to avoid spectral pollution when computing $\Lambda(\mathcal{D}_V)$ \cite{drake1981application,kutzelnigg1984basis,talman1986minimax,kutzelnigg1997relativistic,shabaev2004dual}. The majority of existing approaches work for certain classes of potentials and avoid spectral pollution on particular subsets of $(-1,1)$. Even for Coulomb-type potentials, spectral pollution can be a difficult issue to overcome, and computations typically achieve a few digits of precision for the ground state and a handful of the first few excited states. A popular approach is the so-called kinetic balance condition, which does not always work for Coulomb potentials \cite{stanton1984kinetic,dyall1990kinetic,dirac_spec_poll}. Our approach does not suffer from spectral pollution and can compute the first thousand eigenvalues to near machine precision accuracy.

\subsubsection{Recovering eigenvalues and projections onto eigenspaces}
The dominated convergence theorem applied to~\cref{eqn:pk_identity} shows that, for any $x\in\mathbb{R}$, we have
\begin{equation}\label{pt_spectra_lim}
\lim_{\epsilon\downarrow0}\epsilon\cdot\mathrm{Im}\left(\langle\mathcal{R}_\mathcal{L}(x+i\epsilon)f,f\rangle\right)=\sum_{\lambda\in\Lambda^{\mathrm{p}}(\mathcal{L})\cap\{x\}}\langle\mathcal{P}_\lambda f,f\rangle.
\end{equation}
Moreover, if there is no singular continuous spectra in a neighborhood of $x$, and $x$ is not an accumulation point of $\Lambda^{\mathrm{p}}(\mathcal{L})$, then~\cref{pt_spectra_lim} can be sharpened to
\begin{equation}\label{pt_spectra_lim2}
\epsilon\cdot\mathrm{Im}\left(\langle\mathcal{R}_{\mathcal{L}}(x+i\epsilon)f,f\rangle\right)=\sum_{\lambda\in\Lambda^{\mathrm{p}}(\mathcal{L})\cap\{x\}}\langle\mathcal{P}_\lambda f,f\rangle+\mathcal{O}(\epsilon).
\end{equation}
These formulas allow us to compute the locations of eigenvalues of the operator, and the corresponding projection coefficients onto the eigenspaces for vectors $f$. 

\subsubsection{The Dirac operator}
We consider a differential operator $\mathcal{D}_V$ associated with a coupled first-order system of differential equations that describes the motion of a relativistic spin-$1/2$ particle in a radially symmetric potential $V(r)$, defined by
\[
\mathcal{D}_V=
\begin{pmatrix} 1 + V(r) & -\frac{d}{dr} +\frac{\kappa}{r}\\ \frac{d}{dr} + \frac{\kappa}{r} & -1+V(r) 
\end{pmatrix}.
\]
Here, $\kappa=j+1/2$ for $j\in\mathbb{Z}$ (related to the angular momentum of the particle) and $\mathcal{D}_V$ is a special case of the Dirac operator with a radially symmetric potential~\cite{thaller_dirac}.

If $V$ satisfies suitable conditions~\cite{thaller_dirac}, then $\mathcal{D}_V$ is a self-adjoint operator with essential spectrum supported on $(-\infty,-1]\cup [1,\infty)$. Depending on $V(r)$, the spectrum may also contain discrete eigenvalues in $(-1,1)$. Generally, in computational chemistry, positive eigenvalues correspond to bound states of a relativistic quantum electron in the external field $V$, and negative eigenvalues correspond to bound states of a positron~\cite{thaller_dirac}.

\subsubsection{Computing eigenvalues while avoiding spectral pollution}
Assuming that $f$ in~\cref{pt_spectra_lim} is not orthogonal to any of the eigenfunctions, it follows from~\cref{pt_spectra_lim} and~\cref{pt_spectra_lim2} that the positions of the peaks of the function
$$
\nu_f^\epsilon(x):=\epsilon\cdot\mathrm{Im}\left(\langle\mathcal{R}_{\mathcal{D}_V}(x+i\epsilon)f,f\rangle\right)
$$
correspond to the eigenvalues. To test this, we consider the case of $\kappa=-1$ and the Coulomb-type potentials $V(r)=\gamma/r$ for $-\sqrt{3}/2<\gamma<0.$ For these potentials, the eigenvalues are known analytically as~\cite[Ch.~7]{thaller_dirac}
$$
E_j(\mathcal{D}_V)=\left(1+{\gamma^2}{(j+\sqrt{1-\gamma^2})^{-2}}\right)^{-1/2},\qquad j\geq 0.
$$
Note that the eigenvalues accumulate at $1$. This makes computing $E_j(\mathcal{D}_V)$ difficult when $j$ is large, even in the absence of spectral pollution.

\Cref{Dirac} (left) shows $\nu_f^\epsilon$ with $\epsilon=10^{-10}$, $f(r)=(\sqrt{2}re^{-r},\sqrt{2}re^{-r})$, and $\gamma=-0.8$. One can robustly compute $\nu_f^\epsilon$ for a fixed $\epsilon>0$ by using the techniques in~\cref{specfun_ODE_meth} and adaptively selecting the discretization size. For $\epsilon=10^{-10}$, we can accurately compute $E_1(\mathcal{D}_V),\ldots,E_{1000}(\mathcal{D}_V)$ by the location of the local maxima of $\nu_f^\epsilon$.  Moreover, the size of the peaks correspond to $\|\mathcal{P}_{E_j} f\|^2$. \Cref{Dirac} (left) shows that $\|\mathcal{P}_{E_j} f\|^2$ decreases at an algebraic rate as $j\rightarrow \infty$.  If one is not satisfied with the accuracy of the computed eigenvalues, then one can decrease $\epsilon$ at the expense of an increased computational cost. In~\cref{Dirac} (right), we show the absolute error in the computed eigenvalues $E_j(\mathcal{D}_V)$ for $j = 0, 5, 10, 100, 500,$ and $1000$ as $\epsilon\downarrow 0$. We find that our algorithm can resolve hundreds of eigenvalues, even when highly clustered, to an accuracy of essentially machine precision.

\begin{figure}
\centering
\centering
\begin{minipage}[b]{0.48\textwidth}
    \begin{overpic}[width=\textwidth]{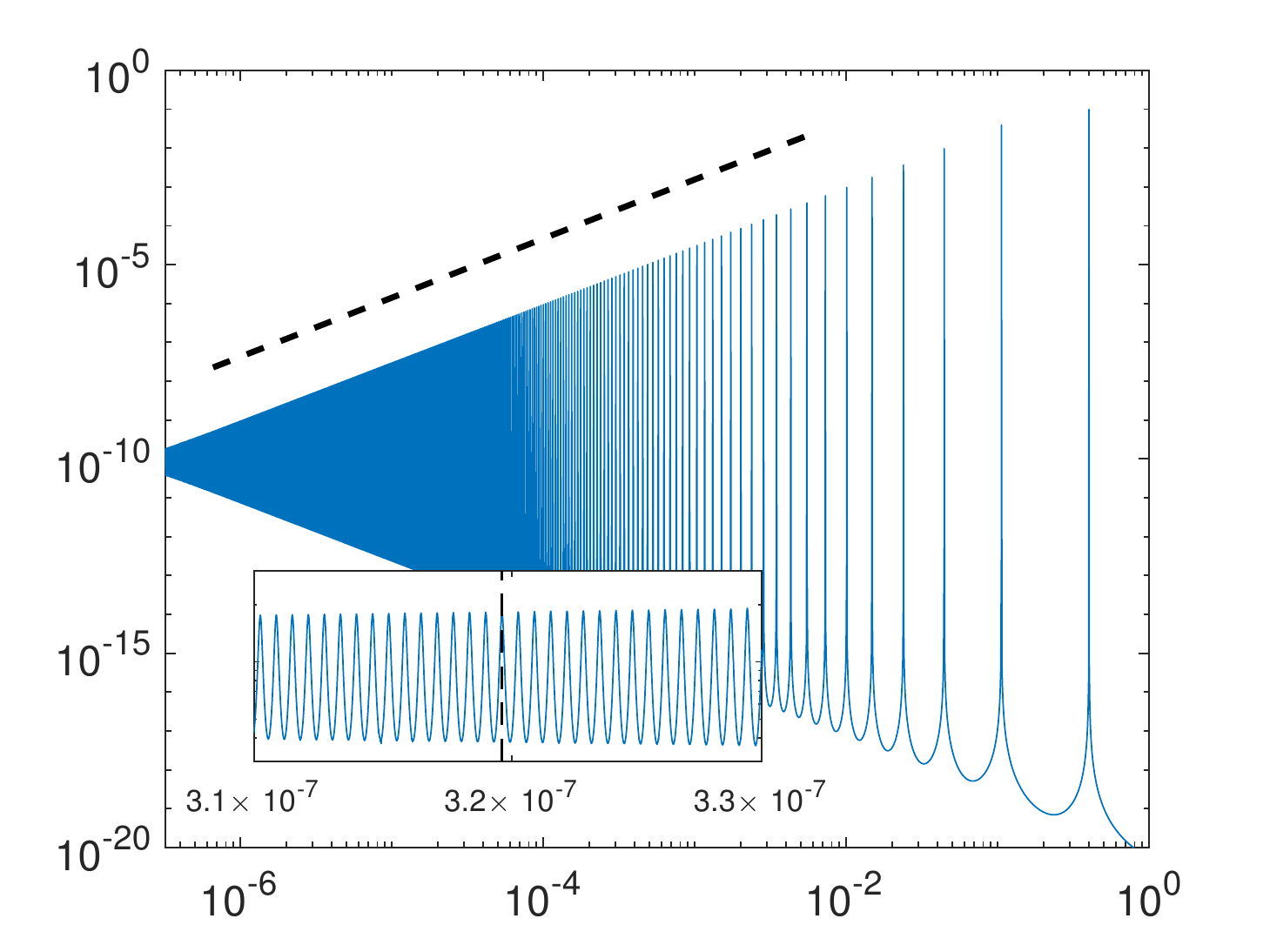}
 	\put (45,-2) {$\displaystyle 1-x$}
	\put (45,73) {$\displaystyle \nu_f^\epsilon(x)$}
 	\end{overpic}
  \end{minipage}
  \hfill
	\begin{minipage}[b]{0.48\textwidth}
    \begin{overpic}[width=\textwidth]{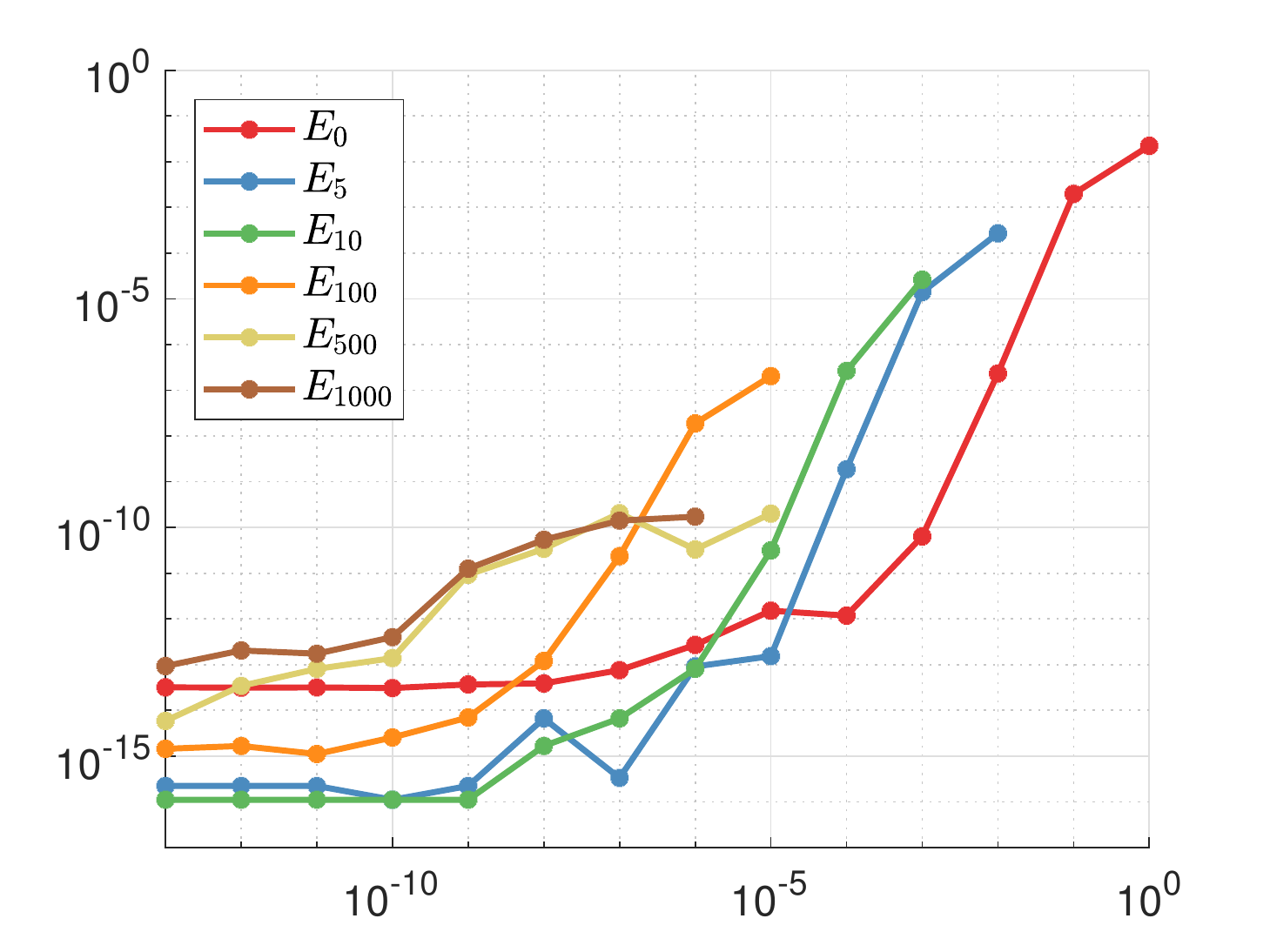}
 	\put (50,-2) {$\displaystyle \epsilon$}
	\put (34,73) {Absolute Error}
 	\end{overpic}
  \end{minipage}
\caption{Left: The function $\nu_f^\epsilon(x)$ for $x$ near $1$. The sloped dashed line shows the algebraic decay of $\smash{\|\mathcal{P}_{E_j}f\|^2}$ (approximately $\mathcal{O}(j^{-3})$). The magnified region shows the extreme clustering, where the vertical dashed line corresponding to $E_{1000}$.  Right: The absolute error in the computed eigenvalues $E_j(\mathcal{D}_V)$ for $j = 0, 5, 10, 100, 500, 1000$ as $\epsilon\downarrow 0$.}
\label{Dirac}
\end{figure}

\section{Conclusions and additional potential applications}\label{sec:conclusions}
In this paper, we have developed a general framework for evaluating smoothed approximations to the spectral measures of self-adjoint operators. We have highlighted the theoretical and practical aspects of the algorithm in the context of differential, integral, and lattice operators. The resolvent-based framework robustly captures discrete and continuous spectral properties of the operator, rather than any underlying discretizations, yielding a flexible and efficient method for a variety of spectral problems. 

A general computational framework for computing the spectral measure $\mu_f$ opens the door to a new set of algorithms for computing with operators and studying their spectral properties. As spectral characterizations of continuous and discrete models draw renewed interest in the context of data-centered applications, our algorithms may be useful in understanding the behavior of large real-world networks and new random graph models. The development of rational kernels and corresponding local evaluation schemes may also be useful for local explorations of the spectral density of operators of large finite dimension, such as in DOS calculations in physics~\cite{lin2016approximating} or real-world networks~\cite{dong2019network}.

Our framework can be used to compute the vector-valued functional calculus via
$$
F(\mathcal{L})f\approx \frac{-1}{2\pi i}\int_{-\infty}^\infty F(y)\sum_{j=1}^{m}\left[\alpha_j \mathcal{R}_{\mathcal{L}}(y-\epsilon a_j)f-\beta_j\mathcal{R}_{\mathcal{L}}(y-\epsilon b_j)f\right]dy,
$$
which is useful in the solution of time-evolution problems. For example, taking $F(y)=\exp(-ity)$ gives an approximation of the solution to the linear Schr\"odinger equation with initial state $f$ at time $t$. The vector-valued functional calculus may also be used to solve more complicated evolution systems, such as non-autonomous Cauchy problems and non-linear problems, through splitting methods~\cite{mclachlan2002splitting,lubich_qm_book}. Therefore, our approach may aid the development of discretization-oblivious exponential integrators for PDEs or sampling from stochastic processes with self-adjoint generators~\cite[Ch.~17]{kallenberg2006foundations}.

\appendix

\section{Convergence rates and error bounds}\label{sec:appendix}
In this Appendix, we prove the pointwise and $L^p$ convergence bounds of $K_\epsilon*\mu_f$ to $\rho_f$ as $\epsilon\downarrow 0$.

\subsection{Pointwise error bounds}\label{subsec:pointwise_bounds}
The pointwise convergence shows that samples of $K_\epsilon*\mu_f$ are meaningful because they converge to $\rho_f$, at a rate determined by the local regularity of $\rho_f$ and the order of the kernel. Recall that in~\cref{thm:kernel_rates}, $K$ is an $m$th order kernel, the measure $\mu_f$ is absolutely continuous on $I=(x_0-\eta,x_0+\eta)$ for $\eta>0$ and a fixed $x_0\in\mathbb{R}$, and that $\rho_f\in\mathcal{C}^{n,\alpha}(I)$ with $\alpha\in[0,1)$.
 
\begin{proof}[Proof of~\cref{thm:kernel_rates}]
First, we decompose $\rho_f$ into two non-negative parts $\rho_f=\rho_1+\rho_2$, where $\rho_1$ is compactly supported on $I$ and $\rho_2$ vanishes on $(x_0-\eta/2,x_0+\eta/2)$. Using the convolution representation for $K_\epsilon*\mu_f$, we have
\begin{equation}\label{eqn:approx_errorAPPENDIX}
\left|\rho_f(x_0)-[K_\epsilon*\mu_f](x_0)\right|\!\leq\! \left|\int_\mathbb{R}\!\!K_{\epsilon}(y)\left(\rho_1(x_0-y)-\!\rho_1(x_0)\right)dy\right| \!+\!\left|\!\left[K_\epsilon*\mu_f^{(\mathrm{r})}\right]\!\!(x_0)\right|\!.
\end{equation}
Here, the measure $d\mu_f^{(\mathrm{r})}(y)=d\mu_f(y)-\rho_1(y)dy$ is non-negative and supported in the complement of $(x_0-\eta/2,x_0+\eta/2)$. Since $\mu_f$ is a probability measure, we have that $\smash{\int_\mathbb{R}d\mu_f^{(\mathrm{r})}(y)\leq 1}$, and the second term on the right-hand side of~\cref{eqn:approx_errorAPPENDIX} is bounded by
\begin{equation}\label{eqn:approx_error_term2APPENDIX}
\left|\left[K_\epsilon*\mu_f^{(\mathrm{r})}\right](x_0)\right| = \left|\int_\mathbb{R}K_\epsilon(x_0-y)d\mu_f^{(\mathrm{r})}(y)\right|\leq \sup_{|y|\geq\eta/2}|K_{\epsilon}(y)|\leq\frac{C_K\epsilon^m}{(\epsilon+\frac{\eta}{2})^{m+1}}.
\end{equation}
where the constant $C_K$ is given in~\cref{def:mth_order_kernel}. 

To bound the first term in~\cref{eqn:approx_errorAPPENDIX}, we expand $\rho_1(x_0-y)$ using Taylor's theorem:
\begin{equation}\label{eqn:taylor}
\rho_1(x_0-y)=\sum_{j=0}^{k-1}\frac{(-1)^j\rho_1^{(j)}(x_0)}{j!}y^j + (-1)^k\frac{\rho_1^{(k)}(\xi_y)}{k!}y^k, \qquad k=\min(n,m),
\end{equation}
where $|\xi_y-x_0|\leq |y|$. We consider two cases separately.

{\bf Case (i): $\mathbf{n+\alpha<m}$.} In this case $k=n$ and we can select $\rho_1$ so that
$$
\frac{1}{n!}\left|\rho_1^{(n)}\right|_{\mathcal{C}^{0,\alpha}(I)}\leq C(n,\alpha)\|\rho_{f}\|_{\mathcal{C}^{n,\alpha}(I)}\left(1+\eta^{-n-\alpha}\right),
$$
for some universal constant $C(n,\alpha)$ that only depends on $n$ and $\alpha$. Existence of such a decomposition follows from standard arguments with cut-off functions. Plugging~\cref{eqn:taylor} into~\cref{eqn:approx_errorAPPENDIX} and applying the vanishing moment condition (\cref{def:mth_order_kernel} (ii)), we obtain
\begin{equation}\label{eqn:approx_error_term1APPENDIX}
\int_\mathbb{R}K_\epsilon(y)\left(\rho_1(x_0-y)-\rho_1(x_0)\right)dy=(-1)^n\int_\mathbb{R}K_\epsilon(y)\frac{\rho_1^{(n)}(\xi_y)}{n!}y^n\,dy.
\end{equation}
Since $n<m$, we can use the vanishing moment condition again to obtain
\begin{equation}\label{eqn:approx_error_term1Holder form}
\int_\mathbb{R}K_\epsilon(y)\frac{\rho_1^{(n)}(\xi_y)}{n!}y^n\,dy=\int_\mathbb{R}K_\epsilon(y)\frac{\rho_1^{(n)}(\xi_y)-\rho_1^{(n)}(x_0)}{n!}y^n\,dy.
\end{equation}
Since $\rho_1^{(n)}\in\mathcal{C}^{0,\alpha}(I)$ and $|\xi_y-x_0|\leq |y|$, we have $|\rho_1^{(n)}(\xi_y)-\rho_1^{(n)}(x_0)|\leq|\rho_1^{(n)}|_{\mathcal{C}^{0,\alpha}(I)}|y|^\alpha$. Applying this bound to the integrand in~\cref{eqn:approx_error_term1Holder form} and changing variables $y\rightarrow\epsilon y$,
\begin{equation}\label{eqn:approx_error_term1Final}
\left|\int_\mathbb{R}K_\epsilon(y)\frac{\rho_1^{(n)}(\xi_y)-\rho_1^{(n)}(x_0)}{n!}y^n\,dy\right|\leq\frac{\epsilon^{n+\alpha}}{n!}\left|\rho_1^{(n)}\right|_{\mathcal{C}^{0,\alpha}(I)}\int_\mathbb{R}|K(y)||y|^{n+\alpha}\,dy.
\end{equation}
Recalling our selection of $\rho_1$ and combining~\cref{eqn:approx_error_term1Final} with~\cref{eqn:approx_error_term2APPENDIX} proves case (i).

{\bf Case (ii): $\mathbf{n+\alpha\geq m}$.} 
In this case $k=m$ and we can select $\rho_1$ such that
$$
2e\left\|\rho_1^{(m)}\right\|_{\infty}\leq C(m)\|\rho_{f}\|_{\mathcal{C}^{m}(I)}\left(1+\eta^{-m}\right),
$$
for some universal constant $C(m)$ that only depends on $m$. Again, existence of such a decomposition follows from standard arguments with cut-off functions. Since $\rho_1$ has compact support in $I$, we have that $\rho_1(x_0-y)=0$ if $\left|y\right|\geq\eta$. We split the range of integration in~\cref{eqn:approx_error_term1APPENDIX}, substitute the Taylor expansion in~\cref{eqn:taylor}, and change variables $y\rightarrow\epsilon y$, to obtain 
\begin{equation}\label{eqn:split_integration}
\begin{split}
&\left|\int_\mathbb{R}\!K_{\epsilon}(y)\left(\rho_1(x_0-y)-\rho_1(x_0)\right)dy\right|\leq \left|\rho_1(x_0)\right|\left|\int_{\left|y\right|\geq \eta/\epsilon}\!\!K(y)\,dy\right|\\
&\quad+\sum_{j=1}^{m-1}\frac{\epsilon^j}{j!}\left|\rho_1^{(j)}(x_0)\right|\!\left|\int_{\left|y\right|<\eta/\epsilon}K(y)y^j\,dy\right|+\frac{\epsilon^m}{m!}\left\|\rho_1^{(m)}\right\|_{\infty}\!\int_{\left|y\right|< \eta/\epsilon}\!\!\!\!\left|K(y)\right|\left|y\right|^m\,\!dy.
\end{split}
\end{equation}
By the vanishing moment condition (see~\cref{def:mth_order_kernel} (ii)), we have that
\begin{equation}\label{eqn:equal_moments}
\left|\int_{\left|y\right|< \eta/\epsilon}K(y)y^j\,dy\right|=\left|\int_{\left|y\right|\geq \eta/\epsilon}K(y)y^j\,dy\right|, \qquad 1\leq j\leq m-1.
\end{equation}
\cref{def:mth_order_kernel} (iii) implies that $|K(x)| |x|^{m+1}\leq|K(x)|(1+|x|)^{m+1}\leq C_K$. Substituting~\cref{eqn:equal_moments} into~\cref{eqn:split_integration} with the bound for $|K(x)|$ and integrating, yields an upper bound for the right hand side of~\cref{eqn:split_integration}:
\begin{equation}\label{eqn:upper_bound1}
\sum_{j=0}^{m-1}\frac{\epsilon^{j}}{j!}\left|\rho^{(j)}_1(x_0)\right|\frac{2C_K}{m-j}\left(\frac{\epsilon}{\eta}\right)^{m-j} + \frac{\epsilon^m}{m!}\left\|\rho_1^{(m)}\right\|_{\infty}\int_{\left|y\right|< \eta/\epsilon}\left|K(y)\right|\left|y\right|^m\,dy.
\end{equation}
Since we can write $\rho_1^{(j)}(x_0)$ as an iterated integral of $\rho_1^{(m)}$, we find that
$$
\rho_1^{(j)}(x_0)=\int_{x_0-\eta}^{x_0}\int_{x_0-\eta}^{t_1}\cdots\int_{x_0-\eta}^{t_{m-j-1}}\rho_1^{(m)}(t_{m-j})dt_{m-j}\cdots dt_1, \qquad 0\leq j\leq m-1,
$$
and so it follows that $|\rho_1^{(j)}(x_0)|\leq \eta^{m-j}\|\rho_1^{(m)}\|_{\infty}$. Thus, we have
\begin{equation}\label{eqn:upper_bound2}
\sum_{j=0}^{m-1}\frac{\epsilon^{j}}{j!}\left|\rho^{(j)}_1(x_0)\right|\frac{2C_K}{m-j}\left(\frac{\epsilon}{\eta}\right)^{m-j}\leq 2eC_K\|\rho_1^{(m)}\|_{\infty}\epsilon^m.
\end{equation}
Recalling our selection of $\rho_1$, case (ii) follows from~\cref{eqn:approx_error_term2APPENDIX},~\cref{eqn:upper_bound1}, and~\cref{eqn:upper_bound2}.
\end{proof}

\subsection{${L^p}$ error bounds}\label{subsec:Lp_bounds}
In~\cref{sec:alt_convergence} we motivate error bounds for $\|\rho_f-K_\epsilon\ast \mu_f\|_{L^1}$ to ensure that the calculation of ionization probabilities is meaningful. In this subsection, we prove the $L^p$ error bounds stated in~\cref{thm:Lp_rates}. It is often easier to prove these kind of results in Fourier space so we begin by understanding the regularity properties of $\widehat K$ for an $m$th order kernel (see~\cref{def:mth_order_kernel}), where
\begin{equation} 
\widehat K(\omega):=\int_\mathbb{R} K(x)e^{-2\pi i x \omega}dx, \qquad \omega\in\mathbb{R}.
\label{eq:FourierTransform} 
\end{equation} 

\begin{lemma}[Regularity of Fourier Transform]\label{lemma:FT_kernel}
Let $K$ be an $m$th order kernel (see~\cref{def:mth_order_kernel}). For any $\alpha\in (0,1)$, we have that $\widehat K\in \mathcal{C}^{m-1,\alpha}(\mathbb{R})$ and
\begin{equation}\label{eqn:kernel_origin}
|\widehat K(\omega) - 1|\leq \frac{|\widehat K^{(m-1)}|_{\mathcal{C}^{0,\alpha}}}{(m-1)!}|\omega|^{m-1+\alpha}.
\end{equation}
\end{lemma}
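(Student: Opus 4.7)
My plan is to prove the two claims in succession: first the regularity $\widehat{K}\in\mathcal{C}^{m-1,\alpha}(\mathbb{R})$, then the quantitative expansion at the origin.

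For the regularity, I would start from the decay bound in \cref{decay_bound}. For each $0\le j\le m-1$ one has
$$
\lvert x^j K(x)\rvert\le \frac{C_K\lvert x\rvert^{j}}{(1+\lvert x\rvert)^{m+1}},
$$
which is integrable since the integrand decays like $\lvert x\rvert^{j-m-1}$ with $j-m-1\le-2$. Standard differentiation-under-the-integral-sign arguments applied to \cref{eq:FourierTransform} then yield
$$
\widehat K^{(j)}(\omega) = \int_\mathbb{R} (-2\pi i x)^{j}K(x)\,e^{-2\pi i x\omega}\,dx,\qquad 0\le j\le m-1,
$$
and each such integral is a continuous function of $\omega$ by dominated convergence.

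To upgrade continuity of $\widehat K^{(m-1)}$ to $\alpha$-Hölder continuity, I would estimate
$$
\bigl|\widehat K^{(m-1)}(\omega)-\widehat K^{(m-1)}(\omega')\bigr|\le (2\pi)^{m-1}\int_\mathbb{R}\lvert x\rvert^{m-1}\lvert K(x)\rvert\bigl|e^{-2\pi i x\omega}-e^{-2\pi i x\omega'}\bigr|\,dx
$$
and then apply the elementary pointwise inequality
$$
\bigl|e^{-2\pi i x\omega}-e^{-2\pi i x\omega'}\bigr|\le\min\bigl(2,\,2\pi\lvert x\rvert\lvert\omega-\omega'\rvert\bigr)\le 2^{1-\alpha}(2\pi)^{\alpha}\lvert x\rvert^{\alpha}\lvert\omega-\omega'\rvert^{\alpha}.
$$
The remaining integral is a constant multiple of $\int_\mathbb{R}\lvert x\rvert^{m-1+\alpha}\lvert K(x)\rvert\,dx$, which converges for every $\alpha\in(0,1)$ because \cref{decay_bound} gives an integrand of order $\lvert x\rvert^{\alpha-2}$ at infinity. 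This establishes $\widehat K\in\mathcal{C}^{m-1,\alpha}(\mathbb{R})$ and produces a usable bound on $|\widehat K^{(m-1)}|_{\mathcal{C}^{0,\alpha}}$.

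For \cref{eqn:kernel_origin}, I would combine the two moment conditions in \cref{def:mth_order_kernel} with the derivative formula above, evaluated at $\omega=0$, to obtain
$$
\widehat K(0)=\int_\mathbb{R} K(x)\,dx=1,\qquad \widehat K^{(j)}(0)=(-2\pi i)^{j}\!\!\int_\mathbb{R} x^{j}K(x)\,dx=0\quad\text{for }1\le j\le m-1.
$$
Since $\widehat K\in\mathcal{C}^{m-1}(\mathbb{R})$, Taylor's theorem with Lagrange remainder at order $m-1$ gives, for each $\omega\neq 0$, some $\xi$ between $0$ and $\omega$ with
$$
\widehat K(\omega)=\sum_{j=0}^{m-2}\frac{\widehat K^{(j)}(0)}{j!}\omega^{j}+\frac{\widehat K^{(m-1)}(\xi)}{(m-1)!}\omega^{m-1}=1+\frac{\widehat K^{(m-1)}(\xi)-\widehat K^{(m-1)}(0)}{(m-1)!}\omega^{m-1},
$$
where the last equality uses the vanishing of all intermediate derivatives at $0$ (including $\widehat K^{(m-1)}(0)=0$ when $m\ge 2$; the case $m=1$ is immediate from Hölder continuity). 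The Hölder seminorm bound $|\widehat K^{(m-1)}(\xi)-\widehat K^{(m-1)}(0)|\le |\widehat K^{(m-1)}|_{\mathcal{C}^{0,\alpha}}\lvert\xi\rvert^{\alpha}\le |\widehat K^{(m-1)}|_{\mathcal{C}^{0,\alpha}}\lvert\omega\rvert^{\alpha}$ then yields \cref{eqn:kernel_origin}.

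The one step that requires a bit of care is the Hölder estimate on $\widehat K^{(m-1)}$: at the borderline $j=m-1$ the naive dominated-convergence bound loses integrability, and one must use the $\min(2,\cdot)$ interpolation with exponent $\alpha<1$ to gain just enough decay. Everything else is bookkeeping once the moment conditions at $\omega=0$ are combined with Taylor's theorem.
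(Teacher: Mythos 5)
Your proof is correct, and its second half (moments give $\widehat K(0)=1$ and $\widehat K^{(j)}(0)=0$ for $1\leq j\leq m-1$, then an $(m-1)$th order Taylor expansion at the origin with the H\"older seminorm controlling the remainder) is exactly the paper's argument for \cref{eqn:kernel_origin}. Where you genuinely diverge is the regularity step: the paper gets $\widehat K^{(m-1)}\in\mathcal{C}^{0,\alpha}(\mathbb{R})$ by observing that $(-2\pi ix)^{m-1}K(x)$ lies in a weighted $L^2$ space, so that $\widehat K^{(m-1)}\in\mathcal{W}^{s,2}(\mathbb{R})$ for every $s<3/2$, and then invoking the fractional Sobolev embedding $\mathcal{W}^{s,2}(\mathbb{R})\hookrightarrow\mathcal{C}^{0,\alpha}(\mathbb{R})$; you instead prove H\"older continuity directly from the oscillation bound $\lvert e^{-2\pi ix\omega}-e^{-2\pi ix\omega'}\rvert\leq\min\bigl(2,2\pi\lvert x\rvert\lvert\omega-\omega'\rvert\bigr)\leq 2^{1-\alpha}(2\pi)^{\alpha}\lvert x\rvert^{\alpha}\lvert\omega-\omega'\rvert^{\alpha}$ together with the integrability of $\lvert x\rvert^{m-1+\alpha}\lvert K(x)\rvert$, which \cref{decay_bound} guarantees precisely because $\alpha<1$. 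Both are valid; your route is more elementary and self-contained (no citation to fractional Sobolev spaces) and has the added benefit of an explicit constant, $\lvert\widehat K^{(m-1)}\rvert_{\mathcal{C}^{0,\alpha}}\leq 2^{1-\alpha}(2\pi)^{m-1+\alpha}\int_{\mathbb{R}}\lvert x\rvert^{m-1+\alpha}\lvert K(x)\rvert\,dx$, which could be propagated into quantitative versions of \cref{eqn:kernel_origin}; the paper's route is shorter on the page but non-quantitative, since the embedding theorem does not hand you the seminorm. One small housekeeping remark either way: membership in $\mathcal{C}^{m-1,\alpha}(\mathbb{R})$ with the norm as defined in the paper also requires boundedness of $\widehat K^{(j)}$ for $0\leq j\leq m-1$, which is immediate from your derivative formula since $x^{j}K(x)\in L^1(\mathbb{R})$; it is worth stating in passing.
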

\begin{proof}
Using~\cref{decay_bound} we can differentiate through the integral sign in~\cref{eq:FourierTransform} to conclude that $\widehat K$ is $(m-1)$-times continuously differentiable. Moreover,~\cref{decay_bound} implies that $\widehat K^{(m-1)}\in \mathcal{W}^{s,2}(\mathbb{R})$ for any $s<3/2$ (see~\cite{mclean2000strongly} for definition of fractional Sobolev spaces). Therefore, $\widehat K\in \mathcal{C}^{m-1,\alpha}(\mathbb{R})$ for any $\alpha\in (0,1)$ \cite[Thm.~3.26]{mclean2000strongly}.

For~\cref{eqn:kernel_origin}, note that the normalization condition (\cref{def:mth_order_kernel} (i)) implies that $\widehat K(0)=1$, while the vanishing moment criterion (\cref{def:mth_order_kernel} (ii)) implies that $(\widehat K)^{(j)}(0)=(-2\pi i)^j\int_{\mathbb{R}}K(x)x^jdx =0$ for $1\leq j\leq m-1.$ The bound~\cref{eqn:kernel_origin} then follows by using the $(m-1)$th order Taylor expansion for $\widehat K$ at the origin and applying the H\"older condition to the remainder.
\end{proof}

We can now use this to bound the $L^p$ error of a smoothed approximation $K_\epsilon*g$ when $g\in\mathcal{W}^{m,p}(\mathbb{R})$ and has compact support. 
\begin{lemma}\label{lemma:convolution_bound}
Let $K$ be an $m$th order kernel and let $g$ be any function such that $g\in \mathcal{W}^{m,p}(\mathbb{R})$ for $1\leq p< \infty$ and ${\rm supp}(g)\subset I = (a-\eta,b+\eta)$ for some $\eta>0$. Then, for any $\epsilon>0$, we have that\footnote{The $\log(1/\epsilon)$ factor is avoided if extra decay---beyond~\cref{def:mth_order_kernel} (iii)---is assumed on $K$.}
\begin{equation}\label{eqn:convolution_bound}
\|[K_\epsilon*g]-g\|_{L^p(I)}\leq \frac{2\epsilon^m C_K}{m!} \|g^{(m)}\|_{L^p(\mathbb{R})}\log(1+(b-a+2\eta)/{\epsilon}).
\end{equation}
\end{lemma}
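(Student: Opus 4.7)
}
The plan is to express the convolution error as an integral involving only the top derivative $g^{(m)}$, and then exploit the compact support of $g$ to tame the fact that $|K_\epsilon(y)|\,|y|^m$ fails to be integrable on $\mathbb{R}$. As a first step, I observe that the decay hypothesis~\cref{decay_bound} gives $|K_\epsilon(y)\,y^j|\leq C_K\epsilon^m |y|^j/(\epsilon+|y|)^{m+1}$, which is integrable on $\mathbb{R}$ precisely when $0\leq j\leq m-1$. Combined with the normalization and vanishing moment conditions of~\cref{def:mth_order_kernel}, this legitimizes subtracting off the $(m-1)$st Taylor polynomial $P_{m-1}(x,y)=\sum_{j=0}^{m-1}(-y)^j g^{(j)}(x)/j!$ under the integral sign, yielding
\[
[K_\epsilon * g](x) - g(x) = \int_{\mathbb{R}} K_\epsilon(y)\,\bigl[g(x-y) - P_{m-1}(x,y)\bigr]\,dy.
\]
The integral form of the Taylor remainder then rewrites the bracket as $\tfrac{(-y)^m}{(m-1)!}\int_0^1 (1-t)^{m-1} g^{(m)}(x-ty)\,dt$, turning the error into a triple integral in $y$, $t$, and (after taking norms) $x$.

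Next, I swap the order of integration by Fubini and apply Minkowski's integral inequality to bring the $L^p(I)$ norm inside the $y$- and $t$-integrals. Translation invariance gives $\|g^{(m)}(\cdot -ty)\|_{L^p(I)}\leq \|g^{(m)}\|_{L^p(\mathbb{R})}$, reducing the problem to estimating $\int_\mathbb{R} |K_\epsilon(y)|\,|y|^m\,dy$ — which diverges. This is where the compact support enters: since $\operatorname{supp}(g)\subset I$, the same is true for every $g^{(j)}$, so for $x\in I$ and $t\in[0,1]$ the integrand $g^{(m)}(x-ty)$ vanishes unless $ty\in x-I\subset (-D,D)$ with $D=b-a+2\eta$. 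Thus the relevant $y$-range is effectively restricted to $|y|<D/t$, and I am left with bounding $\int_{|y|<D/t}|K_\epsilon(y)|\,|y|^m\,dy$.

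Using the decay~\cref{decay_bound} and the crude estimate $y^m/(\epsilon+y)^{m+1}\leq 1/(\epsilon+y)$, this truncated integral is at most $2C_K\epsilon^m\log(1+D/(t\epsilon))$. Integrating against $(1-t)^{m-1}/(m-1)!$ over $t\in(0,1]$ and using $\log(1+D/(t\epsilon))\leq \log(1+D/\epsilon)+\log(1/t)$, together with $\int_0^1 (1-t)^{m-1}\,dt = 1/m$ and the finiteness of $\int_0^1 (1-t)^{m-1}\log(1/t)\,dt$, produces the stated bound (the $\log(1/t)$ contribution is absorbed into the overall constant).

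The main obstacle is the non-integrability of $|K_\epsilon(y)|\,|y|^m$ on $\mathbb{R}$: a naive application of Minkowski to Taylor's formula at order $m$ produces a divergent $y$-integral. The resolution, and the conceptual heart of the lemma, is that the support hypothesis on $g$ effectively truncates this integral at scale $D$, and the logarithmic factor $\log(1+D/\epsilon)$ in the statement is exactly the price paid for this truncation of a borderline non-integrable tail. A minor technical point is ensuring the Fubini swap and the identification of the support-restricted region of $(y,t)$ are done carefully uniformly in $x\in I$; this is routine given the compact support of $g^{(m)}$.
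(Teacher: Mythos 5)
Your argument is essentially correct, but it takes a genuinely different route from the paper's proof and, as written, delivers a marginally weaker inequality. The paper works in Fourier space: using \cref{lemma:FT_kernel} it constructs moment-corrected antiderivatives $\phi_1,\dots,\phi_m$ of $K$ satisfying $\widehat{\phi}_j(\omega)=(\widehat{K}(\omega)-1)(2\pi i\omega)^{-j}$ and $|\phi_m(x)|\leq C_K/(m!(1+|x|))$, proves the identity $[K_\epsilon*g]-g=\epsilon^m[\phi_{m,\epsilon}*g^{(m)}]$ via the convolution theorem, an a.e.\ inversion argument, and a density argument with $C_c^\infty$ approximants, and then applies Young's inequality with $\phi_{m,\epsilon}$ truncated to a bounded interval determined by the support of $g$ — that truncated $L^1$ norm is exactly where the logarithm comes from. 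You replace all of this with a physical-space Taylor expansion with integral remainder, Minkowski's integral inequality, and the same support-truncation observation. Your individual steps are sound: subtracting the Taylor polynomial under the integral is justified by the normalization and vanishing moments of \cref{def:mth_order_kernel} together with the integrability of $|K_\epsilon(y)||y|^j$ for $j\leq m-1$ coming from \cref{decay_bound}; the integral-remainder Taylor formula is valid for $g\in\mathcal{W}^{m,p}(\mathbb{R})$ since $g^{(m-1)}$ is absolutely continuous; and the restriction of the $y$-integral to $|y|<(b-a+2\eta)/t$ for $x\in I$ is correct because $\mathrm{supp}(g^{(m)})\subset\overline{I}$. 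What your route buys is elementarity (no distributional Fourier transforms, no a.e.\ inversion, no smoothing/density step); what the paper's route buys is the exact constant, since the identity $[K_\epsilon*g]-g=\epsilon^m[\phi_{m,\epsilon}*g^{(m)}]$ concentrates all the bookkeeping into a single $L^1$ norm of $\phi_{m,\epsilon}$ over a bounded interval, which evaluates to precisely $\tfrac{2C_K}{m!}\log(1+(b-a+2\eta)/\epsilon)$.

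The one caveat is your last step: the $\log(1/t)$ term cannot simply be ``absorbed into the overall constant,'' because the constant in \cref{eqn:convolution_bound} is explicit. Following your estimates, $\int_0^1(1-t)^{m-1}\log\bigl(1+(b-a+2\eta)/(t\epsilon)\bigr)\,dt\leq \tfrac{1}{m}\log\bigl(1+(b-a+2\eta)/\epsilon\bigr)+\int_0^1(1-t)^{m-1}\log(1/t)\,dt$, and the last integral equals $H_m/m$ with $H_m=\sum_{k=1}^m k^{-1}$, so you obtain $\tfrac{2C_K\epsilon^m}{m!}\|g^{(m)}\|_{L^p(\mathbb{R})}\bigl(\log(1+(b-a+2\eta)/\epsilon)+H_m\bigr)$ rather than the stated right-hand side; the additive $H_m$ cannot be hidden multiplicatively for all $\epsilon>0$ because $\log(1+(b-a+2\eta)/\epsilon)$ has no uniform lower bound. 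This is harmless for everything the lemma is used for — in \cref{thm:Lp_rates} the constant is a generic $C(m)$, and for $\epsilon\leq b-a+2\eta$ the extra $H_m$ is absorbed at the cost of a factor $1+H_m/\log 2$ — so either state your version of the lemma with the extra additive constant, or switch to the paper's antiderivative identity if the exact constant is wanted.
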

\begin{proof}
Since $K\in L^1(\mathbb{R})$, we can define the function
\begin{equation}
\label{phi1}
\phi_1(x)=\int_{-\infty}^x K(y)dy - H(x){\int_{\mathbb{R}} K(y)dy}=\begin{cases} 
\int_{-\infty}^x K(y)dy, & x<0, \\ 
-\int_x^\infty K(y)dy, & x>0,
\end{cases}
\end{equation}
where $H(x)$ denotes the Heaviside step function. Using~\cref{decay_bound} and integrating directly, we see that $\phi_1\in L^2(\mathbb{R})$. Furthermore, since $\int_{\mathbb{R}}K(y)dy=1$, we can differentiate $\phi_1$ in the sense of tempered distributions to obtain $\phi_1'=K-\delta_0.$ Taking Fourier transforms, we see that
$$
(2\pi i\omega)\widehat\phi_1(\omega)=\widehat K(\omega)-1.
$$
However, $\widehat\phi_1,\widehat K\in L^2(\mathbb{R})$ and hence we must have $\widehat\phi_1(\omega)=(\widehat K(\omega)-1)(2\pi i\omega)^{-1}$ almost everywhere, and in particular that $(\widehat K(\omega)-1)(2\pi i\omega)^{-1}\in L^2(\mathbb{R})$.

If $m>1$, then by~\cref{decay_bound} and the case definition of $\phi_1$ in~\cref{phi1}, we have $\phi_1\in L^1(\mathbb{R})$ and hence $\widehat\phi_1$ can be identified with a continuous function. Furthermore,~\cref{eqn:kernel_origin} implies that $\widehat\phi_1(0)=\int_\mathbb{R}\phi_1(y)dy=0$ and hence we can define
\[
\phi_2(x)=\int_{-\infty}^x \phi_1(y)dy- H(x){\int_{\mathbb{R}} \phi_1(y)dy}=\begin{cases} 
\int_{-\infty}^x \phi_1(y)dy, & x<0, \\ 
-\int_x^\infty \phi_1(y)dy, & x>0.
\end{cases}
\]
Again by using~\cref{decay_bound} and integrating directly, we see that $\phi_2\in L^2(\mathbb{R})$. We can take distributional derivatives and Fourier transforms as before to deduce that $\widehat\phi_2(\omega)=(\widehat K(\omega)-1)(2\pi i\omega)^{-2}$ almost everywhere. We continue this argument inductively, using~\cref{lemma:FT_kernel}, and for $j=2,\ldots,m$ define $\phi_j(x)=\smash{\int_{-\infty}^x\phi_{j-1}(y)dy}$. The argument shows that $\widehat\phi_j(\omega)=(\widehat K(\omega)-1)(2\pi i\omega)^{-j}$. Using~\cref{decay_bound} and integrating, we have
\begin{equation}\label{eqn:decay_phim}
|\phi_j(x)|\leq C_K(m-j)!/(m!(1+|x|)^{m-j+1}), \qquad 1\leq j\leq m.
\end{equation}

Let $g_n\in C_c^\infty(\mathbb{R})$ for $n\geq 1$ be a sequence of functions with ${\rm supp}(g_n)\subseteq (a-\eta-n^{-1},b+\eta+n^{-1})$ such that $\|g - g_n\|_{\mathcal{W}^{m,p}(\mathbb{R})}\rightarrow 0$ as $n\rightarrow\infty$. Let $\phi_{m,\epsilon}=\epsilon^{-1}\phi_m(x\epsilon^{-1})$, so that
$$
\widehat\phi_{m,\epsilon}(\omega)=\frac{\widehat K_\epsilon(\omega)-1}{(2\pi i\epsilon\omega)^m}
$$
It follows, by the convolution theorem and Carleson's theorem, that for a.e. $x\in\mathbb{R}$
\begin{equation}\label{eqn:fourier_conv}
[K_\epsilon*g_n](x)-g_n(x)=\int_\mathbb{R}\frac{\widehat K_\epsilon(\omega)-1}{(2\pi i\omega)^m}(2\pi i\omega)^m\widehat g_n(\omega)e^{2\pi i\omega x} d\omega =\epsilon^m[\phi_{m,\epsilon}*g_n^{(m)}](x).
\end{equation}
Letting $L_n = ((b-a)+2(\eta+n^{-1}))$, we have $[\phi_{m,\epsilon}*g_n^{(m)}](x)=[\chi_{[-L_n,L_n]}\phi_{m,\epsilon}*g_n^{(m)}](x)$ for $x\in I$, where $\chi_U$ denotes the indicator function of a set $U$. Moreover, $\chi_{[-L_n,L_n]}\phi_{m,\epsilon}\in L^1(\mathbb{R})$ by H\"older's inequality. Taking the $L^p$ norm on both sides of~\cref{eqn:fourier_conv} and applying Young's convolution inequality, yields
\begin{equation}\label{eqn:smoothed_conv_bound}
\|[K_\epsilon*g_n]-g_n\|_{L^p(I)}\leq\epsilon^m\|g_n^{(m)}\|_{L^p(\mathbb{R})}\|\phi_{m,\epsilon}\|_{L^1([-L_n,L_n])}.
\end{equation}
By taking the limit $n\rightarrow\infty$ in~\cref{eqn:smoothed_conv_bound}, we have that
\begin{equation}\label{eqn:smoothed_conv_bound2}
\|[K_\epsilon*g]-g\|_{L^p(I)}\leq\epsilon^m\|g^{(m)}\|_{L^p(\mathbb{R})}\int_{|y|\leq (b-a)+2\eta}|\phi_{m,\epsilon}(y)|\,dy.
\end{equation}

Finally, by~\cref{eqn:decay_phim} with $j=m$, we have that $|\phi_{m,1}(x)|\leq C_K(m!(1+|x|))^{-1}$. Changing variables $y\rightarrow\epsilon y$ in the last integral in~\cref{eqn:smoothed_conv_bound2}, applying the bound for $\phi_{m,1}$, and integrating yields the upper bound in~\cref{eqn:convolution_bound}.
\end{proof}

We are now ready to prove the $L^p$ error bounds when $1\leq p<\infty$. 

\begin{proof}[Proof of~\cref{thm:Lp_rates}]
Let $I'=(a-\eta/2,b+\eta/2)$. Since $\rho_f|_I\in \mathcal{W}^{m,p}(I)$, we can decompose $\rho_f=\rho_1+\rho_2$ such that $\rho_1$ is non-negative, supported in $I$ with $2\|\rho_1^{(m)}\|_{L^p(\mathbb{R})}/m!\leq C(m)\|\rho_f\|_{\mathcal{W}^{m,p}(I)}(1+\eta^{-m})$ for some constant $C(m)$ (that depends only on $m$) and $\rho_2$ is non-negative with support contained in $\mathbb{R}\setminus I'$. Therefore, $\rho_f = \rho_1$ on $(a,b)$ and
\begin{equation}\label{eqn:three_eps}
\|\rho_f-K_\epsilon*\mu_f\|_{L^p((a,b))}\leq \|\rho_1-K_\epsilon*\rho_1\|_{L^p((a,b))}+\|K_\epsilon*\rho_1-K_\epsilon*\mu_f\|_{L^p((a,b))}.
\end{equation}
The first term is bounded via~\cref{lemma:convolution_bound}. To bound the second term, we note that the measure $\mu_f^{(\mathrm{r})}=\mu_f-\rho_1$ is non-negative, supported in $\mathbb{R}\setminus I'$, and has $\smash{\int_\mathbb{R}d\mu_f^{(\mathrm{r})}(y)\leq 1}$. Applying property (iii) in~\cref{def:mth_order_kernel}, we see that
\[
\begin{aligned}
\|K_\epsilon*\rho_1-K_\epsilon*\mu_f\|_{L^p((a,b))}^p &\leq\int_{a}^b \left(\epsilon^{-1}\int_{\mathbb{R}\setminus I'}\left|K\left((x-y)/\epsilon\right)\right|d\mu^{(\mathrm{r})}(y)\right)^p dx \\
&\leq {C_K^p(b-a)}{(\epsilon+\eta/2)^{-(m+1)p}}\epsilon^{mp}.
\end{aligned}
\]
Combining the bounds for the terms in~\cref{eqn:three_eps} concludes the proof.
\end{proof}

\section*{Acknowledgements}
The authors are grateful to St John's College, Cambridge for funding the first author to visit Cornell University, during which the collaboration started. We thank Anthony Austin and Mikael Slevinsky for carefully reading a draft version of this manuscript and the referees whose careful comments helped us improve the manuscript.

\bibliography{bib_file}
\bibliographystyle{plain}
\end{document}